\theoremstyle{thmstyleone}%
\numberwithin{equation}{section}
\newtheorem{theorem}{Theorem}[section]
\newtheorem{proposition}[theorem]{Proposition}%
\newtheorem{corollary}{Corollary}[section]
\theoremstyle{thmstyletwo}%
\newtheorem{remark}[theorem]{Remark}%
\newtheorem{lemma}{Lemma}[section]
\theoremstyle{thmstylethree}%
\newtheorem{definition}[theorem]{Definition}%
\begin{document}

\title[Article Title]{Characterization of strongly convex K\"ahler-Berwald metrics
}


\author{\fnm{Wei} \sur{Xiao}}\email{wxiaomath@126.com}

\author{\fnm{Chunping} \sur{Zhong}\footnote{Corresponding author: zcp@xmu.edu.cn}}\email{zcp@xmu.edu.cn}


\affil{\orgdiv{School of Mathematical Sciences}, \orgname{Xiamen
University},  \city{Xiamen}, \postcode{361005}, \country{China}}




\abstract{
Let $F: T^{1,0}M\rightarrow[0,+\infty)$ be a strongly convex complex Finsler metric on a complex manifold $M$ and $\pmb{J}$ the canonical complex structure on the complex manifold $T^{1,0}M$. We give a geometric characterization of strongly convex K\"ahler-Berwald metrics. In particular, we prove that $\pmb{J}$  is horizontally parallel with respect to the Cartan connection  iff $F$ is a K\"ahler-Berwald metric. We also prove that the Cartan connection and the Chern-Finsler connection associated to $F$ coincide iff $\pmb{J}$ is both horizontal and vertical parallel with respect to the Cartan connection. Based on these results, we give a rigidity theorem of strongly convex K\"ahler-Berwald metrics with constant holomorphic sectional curvatures.}

\keywords{K\"ahler-Berwald metric; Cartan  connection; Chern-Finsler connection}


\pacs[MSC Classification]{53C60, 32Q99}

\maketitle

\section{Introduction and main results}

The search for invariant metrics in complex Finsler geometry has revealed a fundamental dichotomy. Zhong \cite{Zhong-b} proved that no $\operatorname{Aut}(B_n)$-invariant strongly pseudoconvex complex Finsler metric exists on the open unit ball $B_n \subset \mathbb{C}^n$ except for constant multiples of the Bergman metric. However, on the unit polydisk $P_n$ ($n \geq 2$), there exist infinitely many  non-Hermitian quadratic $\operatorname{Aut}(P_n)$-invariant K\"ahler-Finsler metrics.

Subsequent work has generalized these findings. Lin and Zhong \cite{Lin-Zhong} characterized all such invariant metrics which are strongly pseudconvex on the reducible bounded symmetric domain $P_n$, showing that they are all K\"ahler-Berwald metrics. For irreducible bounded symmetric domains $\mathfrak{R}_A$  with $\operatorname{rank}(\mathfrak{R}_A) \geq 2$, Ge and Zhong \cite{Ge-Zhong} also constructed infinitely many $\operatorname{Aut}(\mathfrak{R}_A)$-invariant K\"ahler-Berwald metrics. Moreover, Zhong \cite{Zhong-c} characterized all holomorphically invariant metrics which are strongly pseudoconvex on the classical domains, proving that they are all K\"ahler-Berwald metrics and they share key holomorphic sectional curvature properties with the Bergman metric. These results underscore the central role of K\"ahler-Berwald manifolds in complex Finsler geometry.

The purpose of this  paper is to give a geometric characterization of strongly convex K\"ahler-Berwald metrics. Given a strongly convex complex Finsler metric $F: T^{1,0}M \rightarrow [0, +\infty)$ on a complex manifold $M$, with the canonical complex structures $J$ (on $M$) and $\pmb{J}$ (on $T^{1,0}M$). Our method is via investigating different levels of parallelism of the canonical complex structure $\pmb{J}$ with respect to the Cartan connection $\nabla$ associated to $F$. We find that such parallelism exhibits strong rigidity, which leads to the study of a special class of complex Finsler metrics, namely the so-called K\"ahler-Berwald metrics.

 Let's firstly take a look of the Hermitian case. A Hermitian metric on a complex manifold $M$ is a Riemannian metric $g$ on the underling smooth manifold that is $J$-invariant:
  $
  g(JX,JY)=g(X,Y)
  $
  for all real tangent vectors $X$ and $Y$. Locally $g$ can be represented  by
 $g=2\text{Re}(g_{\alpha\bar{\beta}}dz^\alpha\otimes d\bar{z}^\beta)$ with the associated real $(1,1)$-form $\omega=ig_{\alpha\bar{\beta}}dz^\alpha\wedge d\bar{z}^\beta$. Denote $\nabla$ the Levi-Civita connection of $g$. In \cite{Mok},  N. Mok
 gave the following geometric definition of a Hermitian manifold to be a K\"ahler manifold.
\begin{definition}\cite{Mok}\label{MD}
A Hermitian manifold $(M,g)$ is said to be K\"ahler iff the types of complexified tangent vector fields are preserved under parallel transport.
\end{definition}

Moreover, N. Mok proved the following
\begin{proposition}[\cite{Mok}]\label{prop-I}
Let $(M,g)$ be a Hermitian manifold such that $g$ is given by $2\text{Re}\left(g_{\alpha\bar{\beta}}dz^\alpha\otimes d\bar{z}^\beta\right)$ in local holomorphic coordinates $(z^\alpha)$. Then, $(M,g)$ is K\"ahler iff one of the following equivalent conditions is satisfied:
\begin{enumerate}
\item[(1)] types of complexified tangent vectors are preserved under parallel transport;

\item[(2)] for any parallel (real) vector field $\eta$ along a smooth curve $\gamma$, $J\eta$ is also parallel;

\item[(3)] $\nabla J\equiv 0$, i.e., the almost complex structure $J$ is parallel;

\item[(4)] $\nabla\omega\equiv 0$, i.e., the Hermitian form $\omega$ is parallel;

\item[(5)] $d\omega\equiv 0$, i.e., the Hermitian form $\omega$ is closed;

\item[(6)] locally there exists a potential function $\varphi$ such that $g_{\alpha\bar{\beta}}=\frac{\partial^2\varphi}{\partial z^\alpha\partial \bar{z}^\beta}$;

\item[(7)] at every point $p\in M$ there exists complex geodesic coordinates $(z^\alpha)$ in the sense that the Hermitian metric $g$ is represented by the Hermitian matrix $(g_{\alpha\bar{\beta}})$  satisfying $g_{\alpha\bar{\beta}}(p)=\delta_{\alpha\beta}$ and $dg_{\alpha\bar{\beta}}(p)=0$.
\end{enumerate}
\end{proposition}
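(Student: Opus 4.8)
The plan is to prove the chain of implications
\[
(1)\Leftrightarrow(2)\Leftrightarrow(3)\Leftrightarrow(4)\Longrightarrow(5)\Longrightarrow(6)\Longrightarrow(7)\Longrightarrow(3),
\]
using throughout that $\nabla$ is torsion-free, that $\nabla g\equiv0$, that $\omega(X,Y)=g(JX,Y)$ with $J^2=-\mathrm{id}$, and that $J$ is integrable (it is induced by the complex structure of $M$). The equivalence of $(1)$, $(2)$, $(3)$ is formal. Complexifying the Levi-Civita connection and using that $v\mapsto v-iJv$ is a real-linear isomorphism of $T_pM$ onto $T^{1,0}_pM$, one checks that parallel transport preserves the splitting $TM\otimes\mathbb{C}=T^{1,0}M\oplus T^{0,1}M$ exactly when $J\eta$ is parallel along each curve $\gamma$ whenever $\eta$ is, which is $(1)\Leftrightarrow(2)$. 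Since $\nabla_{\dot\gamma}(J\eta)=(\nabla_{\dot\gamma}J)\eta$ when $\eta$ is parallel along $\gamma$, and since any $(X,Y)$ can be realized as $(\dot\gamma(0),\eta(0))$ with $\eta$ parallel, one obtains $(2)\Leftrightarrow(3)$. Finally $\nabla g\equiv0$ yields the pointwise identity $(\nabla_X\omega)(Y,Z)=g\bigl((\nabla_X J)Y,Z\bigr)$, so non-degeneracy of $g$ gives $(3)\Leftrightarrow(4)$.

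The implication $(4)\Rightarrow(5)$ is immediate from the torsion-free identity $d\omega(X,Y,Z)=(\nabla_X\omega)(Y,Z)+(\nabla_Y\omega)(Z,X)+(\nabla_Z\omega)(X,Y)$. For $(5)\Rightarrow(6)$ I would argue locally: on a small polydisk $\omega=i g_{\alpha\bar\beta}\,dz^\alpha\wedge d\bar z^\beta$ is a real $d$-closed $(1,1)$-form, so the local $\partial\bar\partial$-lemma (which itself follows from the local exactness of the Dolbeault complex together with the holomorphic Poincar\'e lemma) provides a real function $\varphi$ with $\omega=i\partial\bar\partial\varphi$, i.e.\ $g_{\alpha\bar\beta}=\partial^2\varphi/\partial z^\alpha\partial\bar z^\beta$; concretely, writing out $d\omega=0$ by bidegree gives the symmetries $\partial_\gamma g_{\alpha\bar\beta}=\partial_\alpha g_{\gamma\bar\beta}$ and $\partial_{\bar\gamma}g_{\alpha\bar\beta}=\partial_{\bar\beta}g_{\alpha\bar\gamma}$, which are precisely what is needed to integrate up a potential. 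I expect this to be the main obstacle, as it is the only genuinely analytic step; the rest is linear algebra together with the torsion-free and metric properties of $\nabla$.

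For $(6)\Rightarrow(7)$, I would fix $p$ and holomorphic coordinates centered at $p$ with potential $\varphi$; pluriharmonic terms of $\varphi$ do not affect $g_{\alpha\bar\beta}=\partial_\alpha\partial_{\bar\beta}\varphi$, so $\varphi$ may be assumed to start at bidegree $(1,1)$. A linear holomorphic change $z\mapsto Az$ normalizes the Hermitian quadratic part of $\varphi$ to $\sum_\alpha|z^\alpha|^2$, giving $g_{\alpha\bar\beta}(p)=\delta_{\alpha\beta}$; then a quadratic holomorphic change $w^\alpha=z^\alpha+\tfrac{1}{2}a^\alpha_{\beta\gamma}z^\beta z^\gamma$ can be chosen to cancel the $(2,1)$-homogeneous cubic part of $\varphi$, which is exactly the part producing $\partial_\gamma g_{\alpha\bar\beta}(p)$, and conjugation simultaneously kills $\partial_{\bar\gamma}g_{\alpha\bar\beta}(p)$; hence $dg_{\alpha\bar\beta}(p)=0$. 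Lastly, $(7)\Rightarrow(3)$: in such coordinates $dg_{\alpha\bar\beta}(p)=0$ means all first-order holomorphic and antiholomorphic derivatives of $g_{\alpha\bar\beta}$ vanish at $p$, hence all first derivatives of the underlying real metric vanish at $p$, so every Christoffel symbol of $\nabla$ vanishes at $p$; since $J$ has constant components in holomorphic coordinates, $(\nabla J)(p)=0$, and as $p$ is arbitrary $\nabla J\equiv0$. This closes the cycle; the step $(6)\Rightarrow(7)$ is the most computational, but is routine once the Taylor expansion of $\varphi$ is organized by bidegree.
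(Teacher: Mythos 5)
The paper does not prove this proposition at all: it is quoted verbatim from Mok's book \cite{Mok} (with Definition \ref{MD} serving as the definition of ``K\"ahler'', i.e.\ condition (1)), and is used only as motivation for Theorems \ref{mth-a} and \ref{mth-b}. So there is no in-paper argument to compare against; your proposal has to stand on its own, and it does. The cyclic scheme $(1)\Leftrightarrow(2)\Leftrightarrow(3)\Leftrightarrow(4)\Rightarrow(5)\Rightarrow(6)\Rightarrow(7)\Rightarrow(3)$ is the standard classical proof, and each step you sketch is sound: the formal equivalences use only $\nabla g=0$, torsion-freeness and the realization of any $(X,Y)$ by a curve with a parallel field; $(4)\Rightarrow(5)$ is the torsion-free cyclic-sum identity for $d\omega$; $(5)\Rightarrow(6)$ is the local $\partial\bar\partial$-lemma (or, equivalently, direct integration of the symmetries $\partial_\gamma g_{\alpha\bar\beta}=\partial_\alpha g_{\gamma\bar\beta}$, $\partial_{\bar\gamma}g_{\alpha\bar\beta}=\partial_{\bar\beta}g_{\alpha\bar\gamma}$); and the normal-coordinate construction in $(6)\Rightarrow(7)$ works exactly as you organize it by bidegree of the Taylor expansion of $\varphi$, since pluriharmonic and bidegree $\geq(2,2)$ terms are irrelevant and the $(2,1)$ cubic part is killed by the quadratic holomorphic change (note the coefficients $a^\alpha_{\beta\gamma}$ can always be taken symmetric, no extra compatibility being needed because you start from a potential). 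Two small points worth making explicit in a full write-up: in $(7)\Rightarrow(3)$ it is the $J$-invariance of $g$ (absence of $dz^\alpha\otimes dz^\beta$ components) that lets you conclude that all first derivatives of the real components $g_{jk}$ at $p$ are constant-coefficient combinations of $dg_{\alpha\bar\beta}(p)$ and its conjugate, hence vanish; and in $(1)\Leftrightarrow(2)$ one should say explicitly that the complexified parallel transport of a real field $\eta$ splits as the transports of $\eta^{1,0}$ and $\eta^{0,1}$, so type preservation is equivalent to parallelism of $J\eta=i(\eta^{1,0}-\eta^{0,1})$. With those remarks your outline fills in to a complete proof.
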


The study of K\"ahler conditions in complex Finsler geometry originated with Royden's work \cite{Royden}. Let $\triangle(r)$ denote the open disk of radius $r$ in $\mathbb{C}$ centered at the origin which is endowed with the Poincar\'e metric. Under the assumption that for any $(z;v) \in T^{1,0}M$ there exists a holomorphic map $\varphi: \triangle(r) \rightarrow M$ with $\varphi(0)=z$, $\varphi_*(0)=v$, and such that the curve $\gamma(t) = \varphi(e^{i\theta}t)$ is a geodesic tangent to $\mathbb{C} \cdot v$ at $z$ for each $\theta \in \mathbb{R}$ (a condition later termed \emph{Royden's condition} in \cite{Aikou-b}), Royden proved that $F$ must be a weakly K\"ahler-Finsler metric along such maps $\varphi$. For a clear reformulation as well as a characterization of K\"ahler-Finsler metrics, we refer to Theorem 2.3.10 in \cite{AP}.

In their work \cite{chen2}, Chen, Liu, and Zhao investigated the weak parallelism of the canonical complex structure $J$ on a complex manifold $M$ with respect to the real Berwald connection $\breve{\nabla}$ of a strongly convex weakly K\"ahler-Finsler metric $F$. They proved that the real fundamental tensor of $F$ is $J$-invariant iff $F$ comes from a Hermitian metric. Furthermore, they investigated the relationship between $J$ and both the real and complex Ricci curvatures of $F$, they also showed that for strongly convex K\"ahler-Berwald metrics, the real and complex notions of an Einstein metric coincide.

A further inspiration for this work comes from the classical study by M. Abate and G. Patrizio on the Cartan and Chern-Finsler connections for a strongly convex complex Finsler metric \cite{AP}. In particular, they inquire about the existence of a canonical connection on real Finsler manifolds that agrees with the Chern-Finsler connection in the K\"ahler-Finsler case (see \cite{AP}, p. 122). Moreover, they observe that the Cartan connection includes elements deemed geometrically non-essential, since these do not contribute to the first and second variation formulas (see \cite{AP}, p. 125).

It is crucial to distinguish the base manifold's canonical complex structure $J$ (a $(1,1)$-tensor on $M$) from the complex structure $\pmb{J}$ (a $(1,1)$-tensor on $T^{1,0}M$), on which the geometry of a strongly convex complex Finsler metric $F$ fundamentally resides. In this paper, we derive rigidity results by investigating the parallelism of $\pmb{J}$ with respect to the associated Cartan connection $\nabla$, an approach grounded in the concept of parallelism relative to a real Finsler connection of $F$.
 Our first main result is
\begin{theorem}\label{mth-a}
Let $F: T^{1,0}M\rightarrow [0,+\infty)$ be a strongly convex complex Finsler metric on a complex manifold $M$.  Then $\nabla \pmb{J}=0$ iff the Cartan connection $\nabla$ associated to $F$ coincides with the Chern-Finsler connection $D$ associated to $F$.
\end{theorem}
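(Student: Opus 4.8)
The plan is to work in local coordinates on $T^{1,0}M$, using the splitting of the (real or complexified) tangent bundle into horizontal and vertical subbundles induced by the nonlinear connection of the Cartan connection $\nabla$. First I would recall the explicit coefficient expressions: the Cartan connection is determined by its horizontal coefficients $\Gamma$, vertical coefficients, and the nonlinear connection coefficients, while the Chern-Finsler connection $D$ is the ``$(1,0)$-part'' object with coefficients built directly from the derivatives of $g_{\alpha\bar\beta}$ and the Finsler metric $F$. The key is that both connections live on the pulled-back bundle (or on $T^{1,0}M$ itself), and the discrepancy between them is measured by tensorial quantities — essentially the obstruction to $F$ being K\"ahler together with the ``non-essential'' vertical terms that Abate--Patrizio single out. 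So the statement amounts to showing that the complex structure $\pmb{J}$ is $\nabla$-parallel precisely when all these obstruction tensors vanish.

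The main steps, in order: (i) Write $\pmb{J}$ in the adapted frame $\{\delta_\alpha = \partial/\partial z^\alpha - \Gamma^\gamma_\alpha \partial/\partial v^\gamma,\ \dot\partial_\alpha = \partial/\partial v^\alpha\}$ and its conjugates, noting that $\pmb{J}$ acts as multiplication by $i$ on the $(1,0)$ parts of both horizontal and vertical distributions; this uses that the Cartan nonlinear connection is compatible with the complex structure of the manifold $T^{1,0}M$ (the horizontal distribution is a complex subbundle). (ii) Compute $\nabla \pmb{J}$ componentwise: since $\pmb{J}$ is, in this frame, a constant-coefficient tensor ($\pm i$ times identity blocks), $\nabla\pmb{J} = 0$ is equivalent to the statement that $\nabla$ preserves the type decomposition, i.e. that the connection coefficients have no ``mixed'' components mapping $(1,0)$-directions to $(0,1)$-directions (horizontally or vertically). (iii) Identify these mixed components explicitly with the difference tensor $\nabla - D$: the Cartan connection symmetrizes in a way that introduces $\bar\partial$-type coefficients, and these are exactly the terms absent from the Chern-Finsler connection. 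Show that their vanishing forces the horizontal coefficients of $\nabla$ to collapse to those of $D$ and the spurious vertical coefficients to vanish, hence $\nabla = D$. (iv) Conversely, if $\nabla = D$, then since $D$ is by construction a complex (type-preserving) connection — it is the Hermitian connection on the pulled-back holomorphic bundle, hence $D\pmb{J} = 0$ trivially — we get $\nabla \pmb{J} = 0$.

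The hard part will be step (iii): carefully matching the mixed-type components of the Cartan connection coefficients against the known difference formula $\nabla - D$ in complex Finsler geometry, keeping track of the horizontal versus vertical contributions separately and confirming that the vanishing of the $\pmb{J}$-obstruction is not merely necessary but also sufficient for $\nabla = D$ (i.e. that no further conditions are hidden). This requires the explicit structure equations for the Cartan connection of a strongly convex complex Finsler metric, which I would take from \cite{AP}, together with the coordinate description of $D$. A secondary technical point is ensuring the computation is frame-independent: I would phrase the final identity intrinsically in terms of the $(0,1)$-part of the connection form acting on horizontal and vertical $(1,0)$-vectors, so that ``$\nabla\pmb J=0$'' reads as ``the connection form is of type $(1,0)$ on the holomorphic frame'', which is manifestly the defining property of $D$.
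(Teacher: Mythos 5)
Your overall shape is right --- express $\nabla\pmb{J}=0$ as a type-preservation condition on the Cartan connection coefficients and then compare with $D$ --- and your step (ii) is essentially the paper's Theorem \ref{Th-ab}. But there are two genuine gaps. First, step (i) assumes that the Cartan nonlinear connection is ``compatible with the complex structure'' so that the horizontal distribution is a complex subbundle. This is false for a general strongly convex complex Finsler metric: in general $\left(\frac{\delta}{\delta x^\alpha}\right)_o\neq\delta_\alpha$ and $\mathcal{H}_{\mathbb{R}}$ is not $\pmb{J}$-invariant. The $\pmb{J}$-invariance of $\mathcal{H}_{\mathbb{R}}$ (equivalently $\hat{\Gamma}^{\beta}_{\alpha^\ast}=-\hat{\Gamma}^{\beta^\ast}_{\alpha}$, $\hat{\Gamma}^{\beta^\ast}_{\alpha^\ast}=\hat{\Gamma}^{\beta}_{\alpha}$) is itself a \emph{consequence} of the hypothesis $\nabla\pmb{J}=0$ (resp.\ of $\nabla=D$), so building it into the adapted frame at the outset begs the question.

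Second, and more seriously, step (iii) --- the actual identification of the coefficients --- is not carried out, and the mechanism you propose (``match the mixed-type components against the known difference formula $\nabla-D$'') does not exist off the shelf. The two connections are metric-compatible with respect to \emph{different} structures: $\nabla$ with the real inner product $\langle\cdot\vert\cdot\rangle$ built from $g_{jk}$, and $D$ with the Hermitian product $\langle\cdot,\cdot\rangle$ built from $G_{\alpha\bar\beta}$; these differ by the symmetric product $\langle\!\langle\cdot,\cdot\rangle\!\rangle$ (Proposition \ref{PS}), so there is no direct coefficient comparison until that discrepancy is resolved. The paper's route is: (a) show that $\nabla\pmb{J}=0$ makes the complexified Cartan connection a \emph{good complex vertical connection of type $(1,0)$} (Theorem \ref{Th-b}); (b) use the identity $\langle V,W\rangle=2\langle V\vert\overline{W}\rangle$ (Lemma \ref{L-a}) to convert real metric compatibility into the Hermitian compatibility condition \eqref{MC}; (c) invoke the \emph{uniqueness} of the Chern-Finsler connection (Theorem \ref{CFC}) to conclude the coefficients coincide. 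Along the way one must also prove that the two nonlinear connections agree ($\mathcal{N}^{\mu}_{;\alpha}=\Gamma^{\mu}_{;\alpha}$), which is where the K\"ahler-Berwald property of $F$ enters (Theorem \ref{Th-c}). Your converse direction is morally correct ($D$ is $\mathbb{C}$-linear, hence commutes with multiplication by $i$), but even there one must first extract from $\nabla=D$ the $\pmb{J}$-invariance of $\mathcal{H}_{\mathbb{R}}$ before the identification of $\pmb{J}\vert_{\mathcal{V}_{\mathbb{R}}}$ with multiplication by $i$ on $\mathcal{V}^{1,0}$ can be propagated to all the required coefficient symmetries \eqref{eq-ab} and \eqref{eq-abc}.
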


In Hermitian geometry, if $h$ is a Hermitian metric on a complex manifold $M$ and $g=2\text{Re}\,h$ is the associated Riemannian metric, and if the canonical complex structure $J$ on $M$ is parallel with respect to the Levi-Civita connection of $g$, then $h$ must be a K\"ahler metric. Theorem \ref{mth-a} generalizes this classical result to the Finsler setting.

Our second main result is
  \begin{theorem}\label{mth-b}
Let $F: T^{1,0}M\rightarrow [0,+\infty)$ be a strongly convex weakly K\"ahler-Finsler metric on a complex manifold $M$ and $\sigma:[0,1]\rightarrow M$ a smooth regular curve in $M$. Then the following assertions are equivalent:
\begin{enumerate}
\item[(1)]  $F$ is a strongly convex K\"ahler-Berwald metric;

\item[(2)] The types of complexified  vectors in $T_{\mathbb{R}}M$ are preserved under parallel transport along $\sigma$ with respect to  $\nabla$;

\item[(3)] For any parallel real vector field $V$ along $\sigma$ with respect to  $\nabla$,  $ JV$ is also parallel along $\sigma$  with respect to $\nabla$;

\item[(4)] $\nabla_X  \pmb{J}\equiv 0$ for all $X\in\mathcal{H}_{\mathbb{R}}$;

\end{enumerate}
\end{theorem}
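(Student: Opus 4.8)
The plan is to establish the chain of equivalences $(1)\Leftrightarrow(4)$ first, since this is the ``infinitesimal'' version, and then deduce the ``integrated'' statements $(2)$ and $(3)$ from $(4)$ by the standard ODE argument for parallel transport, exactly as in the proof of Proposition \ref{prop-I}. The starting point is to fix local holomorphic coordinates $(z^\alpha)$ on $M$, the induced coordinates $(z^\alpha; v^\alpha)$ on $T^{1,0}M$, and to write down the horizontal and vertical frames $\{\delta_\alpha, \dot\partial_\alpha\}$ adapted to the Cartan connection, together with the action of $\pmb J$ on them. The key computational identity to set up is the expression for $\nabla_{\delta_\alpha}\pmb J$ and $\nabla_{\dot\partial_\alpha}\pmb J$ in terms of the connection coefficients of $\nabla$; since $\pmb J$ is the canonical complex structure on $T^{1,0}M$, its covariant derivative measures precisely the failure of the horizontal and vertical subbundles to be $\nabla$-invariant, i.e. the ``mixed'' $(1,0)$–$(0,1)$ parts of the connection forms. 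For a strongly convex complex Finsler metric these mixed parts are governed by the fundamental tensor $G$ and its derivatives, and the condition $\nabla_X\pmb J=0$ for all horizontal $X$ should translate into the vanishing of exactly the tensor that distinguishes a Kähler-Berwald metric among weakly Kähler-Finsler metrics — namely that the Chern-Finsler connection coefficients $\Gamma^\alpha_{;\mu\beta} = G^{\bar\gamma\alpha}\,\delta_\mu g_{\beta\bar\gamma}$ depend on $z$ alone (Berwald) and are symmetric in the lower indices (weakly Kähler forces the trace symmetry, Berwald upgrades it).

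The core of the argument, then, is Step 1: show $(4)\Rightarrow(1)$ by unwinding $\nabla_{\delta_\alpha}\pmb J=0$ into the statement that the horizontal distribution $\mathcal H^{1,0}$ is preserved under $\nabla$ along horizontal directions, hence that the mixed curvature/connection terms vanish, and then invoke the characterization of Kähler-Berwald metrics (the $v$-independence of the connection coefficients plus the symmetry of the spray coefficients) — here I would lean on Theorem 2.3.10 of \cite{AP} or its Kähler-Berwald refinement, together with the assumption that $F$ is already weakly Kähler-Finsler. Step 2 is the converse $(1)\Rightarrow(4)$: for a Kähler-Berwald metric the Chern-Finsler connection coincides with a genuine connection pulled back from $M$ (the associated Berwald connection is linear on the base), the horizontal bundle is then $\pmb J$-holomorphic, and one checks $\nabla\pmb J$ vanishes on $\mathcal H_{\mathbb R}$ by direct substitution of the now $v$-independent coefficients. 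Step 3: $(4)\Rightarrow(2)$ and $(4)\Rightarrow(3)$ are the ODE arguments — if $V(t)$ is $\nabla$-parallel along $\sigma$ then $\tfrac{\nabla}{dt}(\pmb J V) = (\nabla_{\dot\sigma}\pmb J)V + \pmb J(\tfrac{\nabla}{dt}V) = 0$ since $\dot\sigma$ lifts horizontally and $V$ is parallel; splitting a complexified parallel field into $\pm i$-eigenspaces of $\pmb J$ then gives type preservation, so $(2)$ and $(3)$ follow and are in fact equivalent to each other by linear algebra. Step 4: $(2)\Rightarrow(4)$ or $(3)\Rightarrow(4)$ — choosing, at a point, parallel fields with prescribed initial values and differentiating the relation ``$\pmb J V$ parallel'' at $t=0$ recovers $(\nabla_{\dot\sigma}\pmb J)V=0$ for arbitrary horizontal $\dot\sigma$ and arbitrary $V$, hence $(4)$.

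I expect the main obstacle to be Step 1, the precise identification of $\nabla_{\delta_\alpha}\pmb J=0$ with the Kähler-Berwald condition: the Cartan connection $\nabla$ is \emph{not} the Chern-Finsler connection in general (that is precisely the content of Theorem \ref{mth-a}), so one cannot simply quote Chern-Finsler Kähler identities; instead one must carefully relate the Cartan connection coefficients to the complex data. The subtlety is that the Cartan connection carries the ``geometrically non-essential'' terms noted by Abate–Patrizio, and one has to verify that when restricted to horizontal directions the obstruction to $\nabla\pmb J=0$ involves only the essential (spray/Berwald) part, so that the hypothesis ``$F$ is weakly Kähler-Finsler'' is exactly what is needed to promote ``horizontal parallelism of $\pmb J$'' to ``Berwald.'' A secondary technical point is bookkeeping between the real formalism (real tangent bundle $T_{\mathbb R}M$, the tensor $J$, the real frames) and the complex formalism ($T^{1,0}M$, holomorphic frames); I would handle this by systematically using the standard real-to-complex dictionary (as in \cite{chen2}) and stating once and for all the correspondence between real parallel transport with respect to $\nabla$ and the horizontal lift on $T^{1,0}M$.
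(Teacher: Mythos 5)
Your overall architecture is reasonable and your steps $(1)\Leftrightarrow(4)$ and $(4)\Rightarrow(2),(3)$ track what the paper does (these are Theorems \ref{Th-c} and \ref{Th-d}, plus the ODE argument), but there is a genuine gap in your Step 4, the implication $(2)/(3)\Rightarrow(4)$. You claim that differentiating the relation ``$\pmb{J}V$ is parallel'' at $t=0$ recovers $(\nabla_{\dot\sigma}\pmb{J})V=0$ for arbitrary horizontal $\dot\sigma$ and arbitrary $V$. This does not work as stated: the parallel transport equation \eqref{parallel} involves the connection coefficients only through the combination $\hat{\Gamma}^k_{l;j}(\sigma;\dot\sigma)\dot\sigma^j=\hat{\Gamma}^k_l(\sigma;\dot\sigma)$ (by symmetry and \eqref{hcp}), i.e.\ the argument of the coefficients and the contracting vector are forced to be the same $\dot\sigma$. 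Consequently, varying over curves and parallel fields yields only the relations $\hat{\Gamma}^\beta_{\alpha}=\hat{\Gamma}^{\beta^\ast}_{\alpha^\ast}$, $\hat{\Gamma}^\beta_{\alpha^\ast}=-\hat{\Gamma}^{\beta^\ast}_{\alpha}$ on the \emph{non-linear} connection coefficients (the $\pmb{J}$-invariance of $\mathcal{H}_{\mathbb{R}}$, i.e.\ \eqref{eq-cc}), whereas condition $(4)$ is equivalent, via Theorem \ref{Th-ab}, to the relations \eqref{eq-ab} on the full horizontal Cartan coefficients $\hat{\Gamma}^k_{j;l}$. The passage from \eqref{eq-cc} to \eqref{eq-ab} is not automatic and is exactly where the weakly K\"ahler-Finsler hypothesis enters: differentiating \eqref{eq-cc} in $y$ gives the corresponding symmetries for the Berwald coefficients $\hat{\mathbb{G}}^k_{jl}$; the weakly K\"ahler assumption gives $\mathbb{G}^\mu=\hat{\mathbb{G}}^\mu+i\hat{\mathbb{G}}^{\mu^\ast}$, whence $\mathbb{G}^\mu$ is holomorphic and (by homogeneity) quadratic in $v$, so $F$ is weakly complex Berwald; then one invokes the result that a strongly convex weakly K\"ahler-Finsler, weakly complex Berwald metric is a real Berwald metric, so that $\hat{\mathbb{G}}^k_{jl}=\hat{\Gamma}^k_{j;l}$ and \eqref{eq-ab} finally follows. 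Without this chain your Step 4 proves strictly less than $(4)$.

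A secondary point: you locate the role of the weakly K\"ahler-Finsler hypothesis in your Step 1 ($(4)\Rightarrow(1)$), but in the paper that implication (Theorem \ref{Th-c}) holds for any strongly convex complex Finsler metric and does not use the hypothesis; the hypothesis is needed precisely in the step analyzed above. Also note the paper proves the equivalences as a single cycle $(1)\Rightarrow(2)\Rightarrow(3)\Rightarrow(4)\Rightarrow(1)$ rather than your hub-and-spoke arrangement; either ordering is fine once the $(3)\Rightarrow(4)$ gap is repaired.
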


\begin{remark}
There are lots of nontrivial (non-Hermitian quadratic) strongly convex K\"ahler-Berwald metrics \cite{Zhong-b,Lin-Zhong,Zhong-c}. So Theorem \ref{mth-b} gives a geometric characterization of  strongly convex K\"ahler-Berwald manifolds, which partially generalizing Proposition \ref{prop-I} to Finsler setting.
\end{remark}

As an application of Theorem \ref{mth-a}, we obtain the following

\begin{theorem}\label{mth-c}
Let $F: T^{1,0}M\rightarrow [0,+\infty)$ be a strongly convex complex Finsler metric on a complex manifold $M$.  If $\nabla \pmb{J}=0$ and $F$ has constant holomorphic sectional curvature $c\neq 0$, then $F$ is necessary a K\"ahler-Einstein metric on $M$.
\end{theorem}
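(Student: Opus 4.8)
The plan is to deduce Theorem~\ref{mth-c} from Theorem~\ref{mth-a}, reducing it to a rigidity statement about K\"ahler-Berwald metrics of constant holomorphic sectional curvature. By Theorem~\ref{mth-a} the hypothesis $\nabla\pmb J=0$ is equivalent to the coincidence $\nabla=D$ of the Cartan connection and the Chern-Finsler connection associated to $F$; in particular $\pmb J$ is horizontally parallel with respect to the Cartan connection $\nabla$. By the characterization established in this paper --- that $\pmb J$ is horizontally parallel with respect to $\nabla$ if and only if $F$ is a K\"ahler-Berwald metric --- it follows that $F$ is a strongly convex K\"ahler-Berwald metric. Hence it suffices to prove: \emph{a strongly convex K\"ahler-Berwald metric $F$ with constant holomorphic sectional curvature $c\neq 0$ is a K\"ahler-Einstein metric on $M$.}

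To this end I would use the associated Hermitian metric. For a strongly convex complex Berwald metric the Chern-Finsler connection coefficients $\Gamma^\alpha_{\beta;\gamma}$ are functions of $z$ alone and coincide with the Chern-connection coefficients of an associated Hermitian metric $\widehat{h}$ on $M$; since $F$ is K\"ahler-Berwald, $\widehat{h}$ is K\"ahler. Because the two connections literally agree, the horizontal curvature of $F$ equals the pullback to $T^{1,0}M$ of the curvature of $\widehat{h}$, whose coefficients $R^\alpha_{\ \beta;\gamma\bar{\delta}}$ depend only on $z$. Unwinding the definition of the holomorphic sectional curvature of $F$ along $\mathbb{C}v$ and using $\partial F^2/\partial v^\alpha=G_{\alpha\bar{\beta}}(z;v)\bar{v}^{\beta}$ together with the homogeneity relations for $F^2$, the condition $K_F\equiv c$ becomes an identity on $T^{1,0}M$ of the shape $\bigl(\partial F^2/\partial v^\alpha\bigr)\,R^\alpha_{\ \beta;\gamma\bar{\delta}}(z)\,v^\beta v^\gamma\bar{v}^{\delta}=-\tfrac{c}{2}F^4$, that is, a (non-polynomial) factor times a fibrewise polynomial equals $-\tfrac{c}{2}F^4$. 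Using $c\neq 0$ and the strong convexity of $F$ (positive-definiteness of $(G_{\alpha\bar{\beta}})$), I would then argue that the Cartan tensor $\partial G_{\alpha\bar{\beta}}/\partial v^\gamma$ vanishes identically, so that $F$ arises from a Hermitian metric $h=(G_{\alpha\bar{\beta}}(z))$ on $M$. Since the K\"ahler-Finsler condition for a Hermitian metric is precisely the closedness of its fundamental form, $h$ is a K\"ahler metric, and it inherits the constant holomorphic sectional curvature $c\neq 0$ of $F$; hence $h$ is a complex space form, in particular K\"ahler-Einstein with $\operatorname{Ric}(h)$ a nonzero constant multiple of $h$. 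Therefore $F$ is a K\"ahler-Einstein metric on $M$.

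I expect the main obstacle to be precisely this rigidity in the second step: showing that a strongly convex K\"ahler-Berwald metric with \emph{nonzero} constant holomorphic sectional curvature must be Hermitian, a complex analogue of Numata's theorem for real Berwald spaces. The subtlety is that every naive $v$- or $\bar{v}$-contraction of the curvature identity merely reproduces the identity itself via homogeneity, so one must exploit the full tensorial identity together with the positivity furnished by strong convexity --- or else invoke an existing classification of complex Berwald metrics of constant holomorphic sectional curvature. By contrast, the first step is a formal consequence of Theorem~\ref{mth-a} and the K\"ahler-Berwald characterization, and the concluding implication --- that a K\"ahler metric of constant holomorphic sectional curvature is Einstein --- is classical.
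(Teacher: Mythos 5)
Your first step --- invoking Theorem~\ref{mth-a} to get $\nabla=D$, hence horizontal parallelism of $\pmb J$, hence that $F$ is a strongly convex K\"ahler-Berwald metric --- agrees with the paper. But the heart of the theorem is the rigidity statement you isolate and then leave unproven: you write that you ``would then argue that the Cartan tensor vanishes'' and explicitly flag this as the main obstacle, observing only that na\"ive contractions of the curvature identity reproduce it via homogeneity. That is a genuine gap, not a deferral of routine work, and the proposal as written does not prove the theorem.

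The paper closes exactly this gap by \emph{polarization} rather than contraction, exploiting the fact that for a K\"ahler-Berwald metric the tensor $A^\alpha_{\beta;\mu\bar\nu}=-\partial\Gamma^\alpha_{\beta;\mu}/\partial\bar z^\nu$ depends on $z$ alone and is symmetric in $\beta,\mu$. First one must strip the $G_\alpha$-contraction off the defining identity $-2G_\alpha\,\partial\Gamma^\alpha_{;\mu}/\partial\bar z^\nu\,v^\mu\overline{v^\nu}=cG^2$; the paper does this by differentiating once in $\bar v^\gamma$ and using the curvature symmetries of Proposition~2.5.1 in \cite{AP} together with the identity $G_\alpha\Gamma^\alpha_{\gamma\sigma}=0$, which itself requires the \emph{full} hypothesis $\nabla\pmb J=0$ (so that the vertical Chern-Finsler coefficients equal $\hat\Gamma^\alpha_{\gamma\sigma}+i\hat\Gamma^{\alpha^\ast}_{\gamma\sigma}$) --- a point your proposal never engages with. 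This yields $2A^\alpha_{\beta;\mu\bar\nu}v^\beta v^\mu\overline{v^\nu}=cGv^\alpha$. Differentiating successively in $\bar v^\nu$, $v^\beta$, $v^\mu$ then isolates the $v$-independent tensor:
$4A^\alpha_{\beta;\mu\bar\nu}=c\bigl(G_{\beta\bar\nu}\delta^\alpha_\mu+G_{\mu\bar\nu}\delta^\alpha_\beta+G_{\beta\bar\nu\mu}v^\alpha\bigr)$,
and the trace $\alpha=\beta$ (using $G_{\beta\bar\nu\mu}v^\beta=0$) gives $G_{\mu\bar\nu}=\frac{4}{c(n+1)}\mathrm{Ric}_{\mu\bar\nu}$. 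Since the right-hand side depends only on $z$, this single formula delivers both conclusions at once: $F$ is Hermitian (your Cartan-tensor vanishing) and K\"ahler-Einstein. So the missing idea is precisely that repeated fibre differentiation of the cubic identity --- legitimate because the curvature coefficients are $v$-independent in the Berwald case --- converts the pointwise curvature condition into a tensorial one whose trace forces $G_{\mu\bar\nu}$ to be $v$-independent; no appeal to an associated Hermitian metric or to a Numata-type classification is needed.
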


Using Theorem \ref{mth-c}, we immediately obtain the following
 \begin{theorem}\label{mth-d}
 	Let $F: T^{1,0}M\rightarrow[0,+\infty)$ be a complete strongly convex complex Finsler metric on a simply connected complex manifold $M$.   If $\nabla\pmb{J}=0$ and $F$ has constant holomorphic sectional curvature $c\in \mathbb{R}$. Then
 \begin{itemize}
 \item[(i)] if $c<0$, $(M,F)$ is a K\"ahler manifold which is holomorphically isometric to the open unit ball $B_n$ in $\mathbb{C}^n$ with a constant multiple of the Bergman metric, namely
 \begin{equation}
 F^2(z;v)=-\frac{4}{c}\frac{(1-\|z\|^2)\|v\|^2+\vert\langle z,v\rangle\vert^2}{(1-\|z\|^2)^2}
 \end{equation}

\item[(ii)] if $c=0$, $(M,F)$ is locally a complex Minkowski space which is locally holomorphic isometric to $\mathbb{C}^n$ with some complex Minkowski metric $F(z;v)=f(v)$;
 	
 \item[(iii)] if $c>0$,  $(M,F)$ is a K\"ahler manifold which is holomorphically isometric to the complex projective space $\mathbb{CP}^n$ with a constant multiple of the Fubini-Study metric, namely
 \begin{equation}
 F^2(z;v)=\frac{4}{c}\frac{(1+\|z\|^2)\|v\|^2-\vert\langle z,v\rangle\vert^2}{(1+\|z\|^2)^2}.
 \end{equation}
\end{itemize}
\end{theorem}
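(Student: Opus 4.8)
The plan is to combine Theorem \ref{mth-c} with the classical rigidity and uniformization theory for K\"ahler manifolds of constant holomorphic sectional curvature, handling the three sign cases separately. The starting point in all cases is that $\nabla\pmb{J}=0$ forces, via Theorem \ref{mth-a}, the Cartan connection to coincide with the Chern-Finsler connection; in particular $F$ is a K\"ahler-Berwald metric. Once $F$ is K\"ahler-Berwald, its associated (weak) Hermitian metric obtained by restricting the fundamental tensor to the zero section — more precisely, the Berwald property makes the Chern-Finsler connection coefficients independent of the fiber coordinate, so they descend to a genuine Hermitian (indeed K\"ahler) connection on $M$ — is a bona fide K\"ahler metric $h$ on $M$, and the holomorphic sectional curvature of $F$ equals that of $h$. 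I would cite here the structure theory of complex Berwald manifolds (Abate-Patrizio \cite{AP}, and the fact used implicitly in \cite{Lin-Zhong,Zhong-c} that a K\"ahler-Berwald metric is ``modelled'' on a Hermitian K\"ahler metric together with a fiberwise Minkowski datum).

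For case (i), $c<0$: by Theorem \ref{mth-c}, $F$ is K\"ahler-Einstein, and the underlying K\"ahler metric $h$ has constant holomorphic sectional curvature $c<0$. A simply connected complete K\"ahler manifold of constant negative holomorphic sectional curvature is holomorphically isometric to the ball $B_n$ with the appropriate multiple of the Bergman metric (classical; e.g. via the fact that constant holomorphic sectional curvature $c$ pins down the full curvature tensor of a K\"ahler metric, followed by the Cartan-Hadamard/uniformization argument). It then remains to upgrade this from $h$ to $F$: on the ball, $\operatorname{Aut}(B_n)$ acts transitively, and a K\"ahler-Berwald metric with constant holomorphic sectional curvature that is invariant under the holonomy must have its Minkowski fiber datum constant; but by Zhong's theorem \cite{Zhong-b} the only strongly convex such metric invariant under $\operatorname{Aut}(B_n)$ is a constant multiple of the Bergman metric, forcing $F^2$ to be exactly the stated quadratic expression. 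Case (iii), $c>0$: identical in spirit, using that a simply connected compact K\"ahler manifold of constant positive holomorphic sectional curvature is $\mathbb{CP}^n$ with a multiple of the Fubini-Study metric, and again rigidity of the fiber datum (here one can argue directly that a strongly convex complex Minkowski norm on $\mathbb{C}^n$ invariant under $U(n)$ is Euclidean) pins $F$ down to the displayed formula.

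Case (ii), $c=0$: here Theorem \ref{mth-c} does not apply (it requires $c\neq 0$), so one argues differently. With $\nabla = D$ (Cartan $=$ Chern-Finsler) and the holomorphic sectional curvature vanishing identically, the flag/holomorphic curvature of the Chern-Finsler connection of a K\"ahler-Berwald metric vanishes, which by the Berwald structure forces the full curvature to vanish; completeness and simple connectedness then give, via the complex analogue of the Cartan-Ambrose-Hicks theorem (or directly: a flat complex Berwald metric is locally Minkowskian, cf. \cite{AP}), that $(M,F)$ is holomorphically isometric to $\mathbb{C}^n$ with $F(z;v)=f(v)$ for a fixed strongly convex Minkowski norm $f$.

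The main obstacle I expect is not the curvature-tensor/uniformization step — that is classical Riemannian/K\"ahler geometry applied to the underlying metric $h$ — but rather the \emph{passage from the base K\"ahler metric $h$ back to the Finsler metric $F$}: showing that the fiberwise Minkowski part of a strongly convex K\"ahler-Berwald metric with constant holomorphic sectional curvature on $B_n$, $\mathbb{C}^n$, or $\mathbb{CP}^n$ is rigidly determined. For $c\neq 0$ this is where one must invoke the non-existence/classification results of Zhong (\cite{Zhong-b} for the ball, and the analogous argument for $\mathbb{CP}^n$), so the cleanest route is to establish homogeneity of $(M,F)$ under the relevant automorphism group (which follows from $\nabla=D$ plus constant curvature making the metric ``symmetric'' in the appropriate sense) and then quote the rigidity theorem; making the homogeneity argument precise, and checking the strong convexity is preserved, is the delicate part.
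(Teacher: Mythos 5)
Your overall skeleton (Theorem \ref{mth-c} for $c\neq 0$ plus classical K\"ahler uniformization, and a flat-Berwald/Minkowski argument for $c=0$) matches the paper's, but there is a genuine gap in your treatment of the cases $c\neq 0$, and it stems from a misreading of what Theorem \ref{mth-c} actually delivers. You treat ``$F$ is K\"ahler-Einstein'' as a statement about an auxiliary Hermitian metric $h$ extracted from the Berwald structure, with $F$ itself remaining genuinely Finslerian, and you then pose the ``passage from $h$ back to $F$'' as the main remaining difficulty, to be resolved by establishing homogeneity of $(M,F)$ under $\operatorname{Aut}(B_n)$ (resp.\ $\operatorname{PU}(n+1)$) and invoking the classification of invariant metrics from \cite{Zhong-b}. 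That homogeneity claim is not justified: nothing in the hypotheses says $F$ is invariant under the full automorphism group of the model, and ``$\nabla=D$ plus constant curvature makes the metric symmetric'' is not an argument. As written, the step you yourself flag as ``the delicate part'' is simply missing.

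The paper's proof shows this step is unnecessary. In the proof of Theorem \ref{RKE} one arrives at
\begin{equation*}
G_{\mu\bar{\nu}}=\frac{4}{c(n+1)}\,\mathrm{Ric}_{\mu\bar{\nu}},\qquad \mathrm{Ric}_{\mu\bar{\nu}}=\sum_{\alpha=1}^{n}A^{\alpha}_{\alpha;\mu\bar{\nu}}=-\sum_{\alpha=1}^{n}\frac{\partial\Gamma^{\alpha}_{\alpha;\mu}}{\partial\bar{z}^{\nu}},
\end{equation*}
and since $F$ is K\"ahler-Berwald the coefficients $\Gamma^{\alpha}_{\beta;\mu}$ depend only on $z$, so $G_{\mu\bar{\nu}}$ is independent of the fiber coordinates $v$. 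Hence $F^{2}=G_{\mu\bar{\nu}}(z)v^{\mu}\bar{v}^{\nu}$ is already Hermitian quadratic: the conclusion of Theorem \ref{mth-c} is that $F$ \emph{is} a K\"ahler metric on $M$, not merely that it induces one. There is no residual ``fiberwise Minkowski datum'' to pin down, and the classical rigidity theorem for simply connected complete K\"ahler manifolds of constant holomorphic sectional curvature (page 171 of \cite{Kobayashi}) applies directly to $F$ itself, yielding (i) and (iii). For $c=0$ the paper simply cites Theorem 2.1 of \cite{Aikou-b} to conclude that $F$ is locally complex Minkowski; your sketch for that case is in the same spirit but should be anchored to that reference rather than to an unproved complex Cartan--Ambrose--Hicks statement. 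To repair your write-up, delete the homogeneity/invariant-metric detour and instead record explicitly that equation \eqref{chc-6} forces $G_{\mu\bar{\nu}}$ to be independent of $v$.
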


In \cite{Aikou-b}, under the assumptions that $(M,F)$ is a simply connected and complete complex manifold modeled on a complex Minkowski space $(\mathbb{C}^n,f)$, that $F$ is K\"ahler-Finsler and satisfies the Royden condition, and that $(M,F)$ has constant holomorphic sectional curvature $c$, Aikou \cite{Aikou-b} actually outlined the proof of the above Theorem \ref{mth-c}. In contrast, our hypothesis $\nabla\pmb{J}=0$ simple and purely a geometric condition. Furthermore, our proof of Theorem \ref{mth-d} is  mainly based on the rigidity Theorem \ref{RKE}, which is essentially different from the approach of Aikou \cite{Aikou-b}.

\section{Preliminary}

\subsection{Real and holomorphic tangent bundles of a complex manifold}
This section recalls some necessary notations and definitions, which can be found in \cite{AP,BCS}.

Let $M$ be a complex manifold of complex dimension $n (\geq 2)$, and let $\pi:T_{\mathbb{R}}M\rightarrow M$ be its real tangent bundle. Denote by $J:T_{\mathbb{R}}M\rightarrow T_{\mathbb{R}}M$ the canonical complex structure on $M$, and throughout this paper we denote by $i=\sqrt{-1}$ the imaginary unit. We also use Einstein summation convention,  lowercase greek indices will run from $1$ to $n$, whereas lowercase roman indices will run from $1$ to $2n$, and $\alpha^\ast=\alpha+n,\beta^\ast=\beta+n,$ etc.

Let $z=(z^1,\cdots,z^n)$ be the local holomorphic coordinates on an open subset $U\subset M$. Setting $z^\alpha=x^\alpha+ix^{\alpha^\ast}$ for $\alpha=1,\cdots,n$, we obtain the corresponding real coordinates $x=(x^1,\cdots,x^{2n})$ on $U$. The canonical complex structure $J$ on $M$ satisfies
\begin{equation*}
	J\left(\frac{\partial}{\partial x^{\alpha}}\right) =\frac{\partial }{\partial x^{\alpha^\ast}},\quad J\left(\frac{\partial}{\partial x^{\alpha^\ast}}\right)=-\frac{\partial}{\partial x^\alpha},\quad \alpha=1,\cdots,n.
\end{equation*}

Extending $J$ by complex linearity to the complexified tangent bundle $T_{\mathbb{C}}M$,  we
obtain the decomposition
$T_{\mathbb{C}}M = T^{1,0}M \oplus T^{0,1}M$, where $T^{1,0}M$ and $T^{0,1}M$ are the holomorphic and anti-holomorphic tangent bundle, respectively.  Local frames for $T^{1,0}
M$  and $T^{0,1}M$ over $U$ are given by
$$\left\{\frac{\partial}{\partial z^1},\cdots,\frac{\partial}{\partial z^n}\right\}\quad \text{and}\quad  \left\{\frac{\partial}{\partial \bar{z}^1},\cdots,\frac{\partial}{\partial \bar{z}^n}\right\}$$
respectively, where
$$\frac{\partial}{\partial z^{\alpha}}=\frac{1}{2}\left(\frac{\partial}{\partial x^{\alpha}}-i\frac{\partial}{\partial x^{\alpha^\ast   }}\right),\quad
\frac{\partial}{\partial \bar{z}^{\alpha}}=\frac{1}{2}\left(\frac{\partial}{\partial x^{\alpha}}+i\frac{\partial}{\partial x^{\alpha^\ast   }}\right).$$

There exists an explicit $\mathbb{R}$-isomorphism between  $T_{\mathbb{R}}M$ and $T^{1,0}M$, defined by
$$_o:T_{\mathbb{R}}M \rightarrow T^{1,0}M, \;y\mapsto v=\frac{1}{2}(y-iJy),\quad \forall y=y^j\frac{\partial}{\partial x^j}$$
and
$$^o:T^{1,0}M \rightarrow T_{\mathbb{R}}M, \;v\mapsto y=v+\bar{v},\quad \forall v=v^\alpha\frac{\partial}{\partial z^\alpha}.$$
It is easy to verify that $y_o=v$ and $v^o=y$ if we set $v^\alpha=y^\alpha+iy^{\alpha^\ast}$ for $\alpha=1,\cdots,n$.

A vector $v\in T_p^{1,0}M$ can be written as $v=v^{\alpha}\frac{\partial}{\partial z^{\alpha}}\vert_p$. This induces holomorphic coordinates $(z;v)=(z^1,\cdots,z^n;v^1,\cdots,v^n)$ on $\pi^{-1}(U)\subset T^{1,0}M$, showing that $T^{1,0}M$ is a complex manifold of complex dimension $2n$. Setting $v^{\alpha}=y^{\alpha}+\sqrt{-1}y^{\alpha^\ast   }$ for $\alpha=1,\cdots,n$, we define
\begin{equation}
	\frac{\partial }{\partial v^{\alpha}}=\frac{1}{2}\left(\frac{\partial}{\partial y^{\alpha}}-i\frac{\partial}{\partial y^{\alpha^\ast   }}\right),\quad\frac{\partial }{\partial \bar{v}^{\alpha}}=\frac{1}{2}\left(\frac{\partial}{\partial y^{\alpha}}+i\frac{\partial}{\partial y^{\alpha^\ast   }}\right).
\end{equation}

In the following, we denote by $\pmb{J}$ the canonical complex structure on the complex manifold $T^{1,0}M$, namely
\begin{equation}\label{CS-T}
	\begin{aligned}
		&&\pmb{J}\left(\frac{\partial}{\partial x^\alpha}\right)=\frac{\partial}{\partial x^{\alpha^\ast   }},\quad \pmb{J}\left(\frac{\partial}{\partial x^{\alpha^\ast   }}\right)=-\frac{\partial}{\partial x^{\alpha}},\\
	&&	 \pmb{J}\left(\frac{\partial}{\partial y^\alpha}\right)=\frac{\partial}{\partial y^{\alpha^\ast }},\quad \pmb{J}\left(\frac{\partial}{\partial y^{\alpha^\ast   }}\right)=-\frac{\partial}{\partial y^\alpha},
	\end{aligned}
\end{equation}
for $\alpha=1,\cdots,n$.

The real (resp. complex) Finsler metrics considered in this paper are smooth outside the zero section of the real (resp. holomorphic) tangent bundle of $M$. In the following, we denote by $\tilde{M}$ either $T_{\mathbb{R}}M\setminus\{0\}$ or $T^{1,0}M\setminus\{0\}$ when no confusion arises.
A local real frame for $T_{\mathbb{R}}\tilde{M}$ is given by
$$\left\{\frac{\partial}{\partial x^1},\cdots,\frac{\partial}{\partial x^{2n}},\frac{\partial}{\partial y^1},\cdots,\frac{\partial}{\partial y^{2n}}\right\};$$
Correspondingly, a local holomorphic frame for $T^{1,0}\tilde{M}$ is
$$\left\{\frac{\partial}{\partial z^1},\cdots,\frac{\partial}{\partial z^n},\frac{\partial}{\partial v^1},\cdots,\frac{\partial}{\partial v^n}\right\}.$$

Similar to $T_{\mathbb{R}}M$ and $T^{1,0}M$, there is a natural $\mathbb{R}$-isomorphism between $T_{\mathbb{R}}\tilde{M}$ and $T^{1,0}\tilde{M}$, defined by
$$_o:T_{\mathbb{R}}\tilde{M}\rightarrow T^{1,0}\tilde{M},\quad U\mapsto U_o=\frac{1}{2}(U-i\pmb{J}U),\quad\forall U\in T_{\mathbb{R}}\tilde{M}$$
and
 $$^o:T^{1,0}\tilde{M}\rightarrow T_{\mathbb{R}}\tilde{M},\quad X\mapsto X^o=X+\overline{X},\quad\forall X\in T^{1,0}\tilde{M}.$$

It is easy to check that
\begin{equation*}
\frac{\partial}{\partial z^\alpha}=\left(\frac{\partial}{\partial x^\alpha}\right)_o,\quad i\frac{\partial}{\partial z^\alpha}=\left(\frac{\partial}{\partial x^{\alpha^\ast   }}\right)_o,\quad\frac{\partial}{\partial v^\alpha}=\left(\frac{\partial}{\partial y^\alpha}\right)_o,\quad i\frac{\partial}{\partial v^\alpha}=\left(\frac{\partial}{\partial y^{\alpha^\ast   }}\right)_o
\end{equation*}
or equivalently,
\begin{equation*}
\left(\frac{\partial}{\partial z^\alpha}\right)^o=\frac{\partial}{\partial x^\alpha},\quad \left(i\frac{\partial}{\partial z^\alpha}\right)^o=\frac{\partial}{\partial x^{\alpha^\ast   }},\quad\left(\frac{\partial}{\partial v^\alpha}\right)^o=\frac{\partial}{\partial y^\alpha},\quad \left(i\frac{\partial}{\partial v^\alpha}\right)^o=\frac{\partial}{\partial y^{\alpha^\ast }}.
\end{equation*}

\subsection{Strongly convex complex Finsler metrics}
\begin{definition}\cite{BCS}
A real (strongly convex) Finsler metric on a smooth manifold $M$ of real dimension $n$ is a function  $F:T_{\mathbb{R}}M\rightarrow [0,+\infty)$ satisfying
\begin{enumerate}

\item[(i)]  $F$ is positive and $C^\infty$ on  $\tilde{M}=T_{\mathbb{R}}M\setminus\{0\}$;

\item[(ii)] $F(x;\lambda y)=\lambda  F(x;y)$ for all $(x;y)\in T_{\mathbb{R}}M$ and $\lambda>0$;

\item[(iii)] The real fundamental tensor
$$
(g_{jk}):=\left(\frac{1}{2}\frac{\partial^2F^2}{\partial y^j\partial y^k}\right)
$$
is positive definite on $\tilde{M}$.
\end{enumerate}
\end{definition}

\begin{definition}\cite{AP}
A strongly pseudoconvex complex Finsler metric on a complex manifold $M$ of complex dimension $n$ is a function  $F:T^{1,0}M\rightarrow [0,+\infty)$ satisfying
\begin{enumerate}
\item[(i)] $F$ is positive and $C^\infty$ on $\tilde{M}=T^{1,0}M\setminus\{0\}$;

\item[(ii)] $F(z;\lambda v)=\vert \lambda\vert  F(z;v)$ for all $(z;v)\in T^{1,0}M$ and $\lambda\in\mathbb{C}$;

\item[(iii)] The complex fundamental tensor
$$
(G_{\alpha\bar{\beta}}):=\left(\frac{\partial^2F^2}{\partial v^\alpha \partial\bar{v}^\beta}\right)
$$
is Hermitian positive definite on $\tilde{M}$.
\end{enumerate}
\end{definition}

Let $F:T^{1,0}M\rightarrow [0,+\infty)$ be a strongly convex complex Finsler metric. It induces a real Finsler metric $F_{\mathbb{R}}$ on $M$ via
\begin{equation}
F_{\mathbb{R}}(y):=F(y_o)=F(v).\label{RaC}
\end{equation}
Clearly, $F_{\mathbb{R}}$ is $J$-invariant, namely $F_{\mathbb{R}}(Jy)=F_{\mathbb{R}}(y)$.\label{J-m}

 Because of equation \eqref{RaC}, for simplicity, we will make no distinction between notations $F$ and $F_{\mathbb{R}}$ in the following if it causes no confusion.

\subsection{Real geodesic}

Let $F:T^{1,0}M\rightarrow [0,+\infty)$ be a strongly convex complex Finsler metric on a complex manifold $M$, and $\sigma:[a,b]\rightarrow M$ be a smooth curve. According to the proof of Corollary 1.5.2 in \cite{AP},  $\sigma$ is a real geodesic for $F$ iff it satisfies the following differential equation
\begin{equation}
\ddot{\sigma}^k+\hat{\Gamma}_j^k(\sigma;\dot{\sigma})\dot{\sigma}^j=0,\label{geodesic-1}
\end{equation}
where the coefficients $\hat{\Gamma}_j^k$ are the Christoffel symbols of the real non-linear connection $\tilde{\nabla}:\mathcal{X}(T_{\mathbb{R}}M)\rightarrow \mathcal{X}(T_{\mathbb{R}}^\ast M\otimes T_{\mathbb{R}}M)$ associated to $F$. More precisely,
\begin{equation}
\hat{\Gamma}_j^k(x;y):=\frac{\partial \hat{\mathbb{G}}^k}{\partial y^j},\label{geodesic-2}
\end{equation}
where
\begin{equation}
\hat{\mathbb{G}}^k:=\frac{1}{4}g^{kl}\left(\frac{\partial^2F^2}{\partial x^a\partial y^l}y^a-\frac{\partial F^2}{\partial x^l}\right)\label{geodesic-3}
\end{equation}
are called the real geodesic spray coefficients of $F$.

 Equations \eqref{geodesic-1}-\eqref{geodesic-3} show that the real non-linear connection coefficients $\hat{\Gamma}_j^k$ completely determine the real geodesics of $F$.  The real non-linear connection $\tilde{\nabla}$ induces a direct sum decomposition of the tangent bundle of the slit tangent bundle $\tilde{M}$:
 $$T_{\mathbb{R}}\tilde{M}=\mathcal{H}_{\mathbb{R}}\oplus\mathcal{V}_{\mathbb{R}},$$
 where $\mathcal{V}_{\mathbb{R}}=\ker d\pi$ is the real vertical bundle, and $\mathcal{H}_{\mathbb{R}}$ is the real horizontal bundle determined by $\tilde{\nabla}$. This decomposition is locally characterized by adapted frames
 $$\mathcal{H}_{\mathbb{R}}=\text{span}\left\{\frac{\delta}{\delta x^1},\cdots,\frac{\delta}{\delta x^{2n}}\right\},\quad
\mathcal{V}_{\mathbb{R}}=\text{span}\left\{\frac{\partial}{\partial y^1},\cdots,\frac{\partial}{\partial y^{2n}}\right\},
$$
where
$\frac{\delta}{\delta x^j}:=\frac{\partial}{\partial x^j}-\hat{\Gamma}_j^k\frac{\partial}{\partial y^k}$ for $j,k=1,\cdots,2n$. The dual bundles $\mathcal{H}_{\mathbb{R}}^\ast$ and $\mathcal{V}_{\mathbb{R}}^\ast$ are correspondingly spanned by  $\{dx^1,\cdots,dx^{2n}\}$ and $\{\delta y^1,\cdots,\delta y^{2n}\}$, respectively, with $\delta y^j:=dy^j+\hat{\Gamma}^j_k dx^k$.





\subsection{The Cartan connection}

Let $F:T^{1,0}M\rightarrow [0,+\infty)$ be a strongly convex complex Finsler metric on a complex manifold $M$, and let $g=\langle\cdot\vert \cdot\rangle$ be the natural Riemannian metric induced by $F$ on the vertical bundle $\mathcal{V}_{\mathbb{R}}$. In local coordinates,  this metric is expressed as
\begin{equation}
g=g_{jk}(x;y)\delta y^j\otimes \delta y^k
\end{equation}
so that for any vertical vector fields $X=X^j\frac{\partial}{\partial y^j}$ and $Y=Y^k\frac{\partial}{\partial y^k}$ in  $(\mathcal{V}_{\mathbb{R}})_{(x;y)}$,
\begin{equation}
\langle X\vert Y\rangle=g_{jk}(x;y)X^jY^k\quad\text{where}\quad g_{jk}(x;y)=g\left(\frac{\partial}{\partial y^j},\frac{\partial}{\partial y^k}\right).
\end{equation}

\begin{theorem}\label{cartan}\cite{AP}
	Let $F$ be a real Finsler metric on  $M$, and let $\langle\cdot\vert \cdot\rangle$ be the Riemannian structure on $\mathcal{V}_{\mathbb{R}}$ induced by $F$. Then there exists a unique real vertical connection
	$$\nabla : \mathcal{X}(\mathcal{V}_{\mathbb{R}}) \rightarrow \mathcal{X}(T_{\mathbb{R}}^*\tilde{M}\otimes\mathcal{V}_{\mathbb{R}})$$
satisfying the following properties:
	\begin{enumerate}
	\item[(i)] $\nabla$ is good;
	
	\item[(ii)] $\nabla$ is compatible with the metric: for any $X \in T_{\mathbb{R}}\tilde{M}$ and $V,W\in \mathcal{X}(\mathcal{V}_{\mathbb{R}})$,
	$$ X\langle V\vert W\rangle =\langle \nabla_X V\vert W\rangle+\langle V\vert \nabla_X W\rangle;$$
	
	\item[(iii)] The vertical torsion vanishes: $\theta(V,W)=0$ for all $V,W\in \mathcal{V}_{\mathbb{R}}$;
	
	\item[(iv)] The horizontal torsion is vertical: $\theta(H,K)\in \mathcal{V}_{\mathbb{R}} $ for all  $H,K\in \mathcal{H}_{\mathbb{R}}$.
\end{enumerate}
\end{theorem}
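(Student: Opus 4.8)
The plan is to follow the template of the fundamental lemma of (pseudo-)Riemannian geometry, transported to the splitting $T_{\mathbb{R}}\tilde{M}=\mathcal{H}_{\mathbb{R}}\oplus\mathcal{V}_{\mathbb{R}}$ induced by the non-linear connection $\tilde{\nabla}$. First I would show that conditions (i)--(iv) force explicit formulas for the connection coefficients of $\nabla$ in an adapted local frame, which yields \emph{uniqueness}; then I would take those formulas as the definition of $\nabla$, check that they transform correctly under a change of holomorphic coordinates on $M$ so that the local connections glue to a global one, and finally verify (i)--(iv) directly, which gives \emph{existence}.

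For uniqueness, fix local coordinates $(x;y)$ on $\tilde{M}$ and work in the adapted frame $\{\delta/\delta x^j,\partial/\partial y^j\}$ with dual $\{dx^j,\delta y^j\}$. A vertical connection is completely described by the coefficients $\Gamma^l_{jk}$, $C^l_{jk}$ defined by $\nabla_{\delta/\delta x^j}(\partial/\partial y^k)=\Gamma^l_{jk}\,\partial/\partial y^l$ and $\nabla_{\partial/\partial y^j}(\partial/\partial y^k)=C^l_{jk}\,\partial/\partial y^l$. Evaluating the vanishing of the vertical torsion (iii) on $\partial/\partial y^j,\partial/\partial y^k$ (for which $[\partial/\partial y^j,\partial/\partial y^k]=0$) gives $C^l_{jk}=C^l_{kj}$; combining this with the vertical part of the metric compatibility (ii), namely $\partial g_{jk}/\partial y^i=g_{lk}C^l_{ij}+g_{jl}C^l_{ik}$, and running the usual cyclic Koszul permutation while using that $\partial g_{jk}/\partial y^i=\tfrac12\,\partial^3 F^2/\partial y^i\partial y^j\partial y^k$ is totally symmetric, one is forced to $C^l_{jk}=\tfrac12\,g^{lm}\,\partial g_{jk}/\partial y^m$, and Euler's homogeneity identities for $F^2$ and $g_{jk}$ then give $C^l_{jk}y^k=0$. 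For the horizontal coefficients one argues in exactly the same spirit: condition (iv), which says that the $\mathcal{H}_{\mathbb{R}}$-component of the torsion of the linear connection induced by $\nabla$ on $T_{\mathbb{R}}\tilde{M}$ vanishes, forces $\Gamma^l_{jk}=\Gamma^l_{kj}$; feeding this into the horizontal part of (ii), $\delta g_{jk}/\delta x^i=g_{lk}\Gamma^l_{ij}+g_{jl}\Gamma^l_{ik}$, the cyclic manipulation yields $\Gamma^l_{jk}=\tfrac12\,g^{lm}(\delta g_{mk}/\delta x^j+\delta g_{mj}/\delta x^k-\delta g_{jk}/\delta x^m)$. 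Condition (i), that $\nabla$ be good, plays the role of pinning down the horizontal distribution itself: it forces the splitting of $T_{\mathbb{R}}\tilde{M}$ determined by $\nabla$ (through the radial section $\iota=y^j\partial/\partial y^j$) to coincide with $\mathcal{H}_{\mathbb{R}}\oplus\mathcal{V}_{\mathbb{R}}$, which legitimizes computing in the adapted frame above; one checks it is consistent with the coefficients just found. This establishes uniqueness.

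For existence one declares $\nabla$ in each chart by the formulas for $C^l_{jk}$ and $\Gamma^l_{jk}$ found above. Two verifications remain. First, that $\Gamma^l_{jk}$ and $C^l_{jk}$ obey the correct transformation law under change of coordinates, so that the local connections patch into a globally defined vertical connection on $\mathcal{V}_{\mathbb{R}}$; this is a direct computation with the transformation rules of $g_{jk}$, $g^{lm}$, $\hat{\Gamma}^j_k$ and the fields $\delta/\delta x^j$. Second, that $\nabla$ satisfies (i)--(iv): (ii) and (iii) hold by construction (the Koszul formulas are equivalent to them), (iv) holds because $\Gamma^l_{jk}$ is symmetric in $j,k$, and (i) follows from $C^l_{jk}y^k=0$ together with the fact that, by the homogeneity identities for $F^2$ and $g_{jk}$ and the definition of $\hat{\Gamma}^j_k$, the explicit $\Gamma^l_{jk}$ is compatible with the defining relations of $\tilde{\nabla}$ (equivalently, $\ker(\nabla\iota)=\mathcal{H}_{\mathbb{R}}$).

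The step I expect to be the main obstacle is the horizontal one, on both sides. In the uniqueness argument the Koszul manipulation along $\delta/\delta x^j$ is genuinely more delicate than its vertical counterpart: the adapted frame is non-holonomic, so brackets $[\delta/\delta x^j,\delta/\delta x^k]$ (the curvature of the non-linear connection) and $[\delta/\delta x^j,\partial/\partial y^k]=(\partial\hat{\Gamma}^l_j/\partial y^k)\,\partial/\partial y^l$ enter the definition of the torsion and must be tracked carefully, and one must verify that (iv) together with (i) removes precisely the residual freedom. In the existence argument the transformation law for $\Gamma^l_{jk}$, which carries the non-tensorial part of the connection, is the most calculation-heavy point. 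Everything else reduces to bookkeeping with homogeneity identities for $F^2$ and for the fundamental tensor $g_{jk}$.
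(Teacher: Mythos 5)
Your proposal is essentially correct and coincides with the standard argument: the paper itself gives no proof of this theorem (it is imported verbatim from Abate--Patrizio \cite{AP}), and the proof there is exactly the Koszul-type derivation you describe, whose output matches the coefficient formulas the paper records, namely $\hat{\Gamma}^k_{jl}=\frac{1}{4}g^{ks}\frac{\partial^3F^2}{\partial y^j\partial y^l\partial y^s}$ and $\hat{\Gamma}^k_{j;l}=\frac{g^{ks}}{2}\left(\frac{\delta g_{sj}}{\delta x^l}-\frac{\delta g_{jl}}{\delta x^s}+\frac{\delta g_{ls}}{\delta x^j}\right)$. The only point to nail down, which you correctly flag as the delicate step, is that the horizontal bundle entering (iii)--(iv) is the one induced by the good connection $\nabla$ itself, so the Koszul manipulation must first be contracted with $y^l$ (using goodness, i.e.\ $\hat{\Gamma}^k_j=\hat{\Gamma}^k_{l;j}y^l$, and the homogeneity of $g_{jk}$) to show that this bundle is forced to be the canonical one $\mathcal{H}_{\mathbb{R}}$ before the adapted-frame computation is legitimate.
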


In local coordinates, the connection $1$-form $\hat{\omega}_j^{\;k}$ of the Cartan connection is given by
$$\hat{\omega}_j^{k}=\hat{\Gamma}_{j;l}^kdx^l+\hat{\Gamma}^k_{jl}\delta y^l,$$
where the coefficients are defined as
 \begin{eqnarray*}
\hat{\Gamma}_{j;l}^k=\frac{g^{ks}}{2}\left(\frac{\delta g_{sj}}{\delta x^l}-\frac{\delta g_{jl}}{\delta x^s}+\frac{\delta g_{ls}}{\delta x^j}\right),\quad
	\hat{\Gamma}^k_{jl}=\frac{1}{4}g^{ks}\frac{\partial^2F^2}{\partial y^j\partial y^l\partial y^s}.
\end{eqnarray*}
These coefficients are symmetric in their lower indices: $\hat{\Gamma}_{j;l}^k=\hat{\Gamma}_{l;j}^k$ and $\hat{\Gamma}_{jl}^k=\hat{\Gamma}_{lj}^k$. Moreover, since the Cartan connection is good, it follows from Lemma 1.2.2 in \cite{AP} that the non-linear connection coefficients satisfy
\begin{equation}
\hat{\Gamma}^k_j=\hat{\Gamma}^k_{l;j}y^l.\label{hcp}
\end{equation}

Using the horizontal Cartan connection coefficients, one can introduce the parallelism of a real vector field $V\in\mathcal{X}(T_{\mathbb{R}}M)$ along a smooth curve $\sigma:[0,1]\rightarrow M$. In real Finsler geometry, however, there are several different definitions for the parallel transport of a real vector field along a curve. Different definitions stem from distinct geometric interpretations of "parallelism" and the use of different connections. In general, these definitions are not equivalent on a real Finsler manifold, we refer to \cite{AP},\cite{BCS} for more details.

\begin{definition}\label{P}
 Let $V(t)=V^k(t)\frac{\partial}{\partial x^k}$ be a real vector field defined along a smooth curve $\sigma:[0,1]\rightarrow M$. We say $V$ is parallel along $\sigma$ with respect to the Cartan connection $\nabla$  if locally
 \begin{equation}
\frac{\pmb{D}V^k}{dt}:=\frac{dV^k}{dt}+V^l\hat{\Gamma}_{l;j}^k(\sigma(t);\dot{\sigma}(t))\dot{\sigma}^j(t)=0,\quad\forall k=1,\cdots,2n.\label{parallel}
\end{equation}
\end{definition}
In particular, $\sigma$ itself is a geodesic iff  $\dot{\sigma}$ is parallel along $\sigma$ with respect to $\nabla$, which is precisely the equation  \eqref{geodesic-1} since we always have $$\dot{\sigma}^l(t)\hat{\Gamma}_{l;j}^k(\sigma(t);\dot{\sigma}(t))=\hat{\Gamma}_{j}^k(\sigma(t);\dot{\sigma}(t)).$$

\begin{remark}
By the above definition, if  $V_1$ and $V_2$ are two real vector fields defined along $\sigma$, which are parallel along $\sigma$ with respect to the Cartan connection $\nabla$, then $aV_1+bV_2$ are also parallel along $\sigma$ with respect to $\nabla$ for any $a,b\in\mathbb{R}$.
\end{remark}

The torsion $\theta\in\mathcal{X}(\wedge^2 T_{\mathbb{R}}^\ast \tilde{M}\otimes T_{\mathbb{R}}\tilde{M})$ of $\nabla$ has the local expression:
\begin{eqnarray}
	\theta=\theta^{j}\otimes \frac{\delta}{\delta x^j}+\dot{\theta}^{a}\otimes \frac{\partial}{\partial y^a},
\end{eqnarray}
where
\begin{eqnarray*}
	\theta^j&=&\hat{\Gamma}^j_{lc}\delta y^c\wedge dx^l,\\
	\dot{\theta}^a&=&\frac{1}{2}\left[\frac{\delta}{\delta x^j}(\hat{\Gamma}^a_k)-\frac{\delta}{\delta x^k}(\hat{\Gamma}^a_j)\right]dx^j\wedge dx^k+\left[\frac{\partial}{\partial y^b}(\hat{\Gamma}^a_k)-\hat{\Gamma}^a_{b;k}\right]\delta y^b \wedge dx^k.
\end{eqnarray*}

The curvature operator $\hat{\Omega}\in\mathcal{X}(\wedge^2T_{\mathbb{R}}^\ast \tilde{M}\otimes T_{\mathbb{R}}^\ast\tilde{M}\times T_{\mathbb{R}}\tilde{M})$ of $\nabla$ is given by
$$\hat{\Omega}=\hat{\Omega}_j^k\otimes \left[dx^j\otimes \frac{\delta}{\delta x^k}+\delta y^j\otimes \frac{\partial}{\partial y^k}\right],$$
 where
$$\hat{\Omega}_{j}^{k} = d\hat{\omega}_{j}^{k} - \hat{\omega}_{j}^{l} \wedge \hat{\omega}_{l}^{k}. $$

The horizontal flag curvature of $\nabla$ at $y\in\tilde{M}$ is given by
$$R_y(H,K)=\langle \hat{\Omega} (\hat{\chi}(y),H)K\vert \hat{\chi}(y)\rangle,\quad \forall H,K \in (\mathcal{H}_{\mathbb{R}})_y. $$

\subsection{The Chern-Finsler connection}

The study of complex Finsler geometry hinges on the introduction of a complex horizontal subbundle $\mathcal{H}_{\mathbb{C}}\subset T_{\mathbb{C}}\tilde{M}$ that is $\pmb{J}$-invariant, conjugation invariant, and satisfies the decomposition:
$$T_{\mathbb{C}}\tilde{M}=\mathcal{H}_{\mathbb{C}}\oplus\mathcal{V}_{\mathbb{C}}, \quad\mbox{and dually}\quad T_{\mathbb{C}}^\ast\tilde{M}=\mathcal{H}_{\mathbb{C}}^\ast\oplus\mathcal{V}_{\mathbb{C}}^\ast.$$
 Such a bundle $\mathcal{H}_{\mathbb{C}}$ allows one to define a complex non-linear connection $\tilde{D}_{\mathcal{H}_{\mathbb{C}}}$ on $M$. This connection, in turn, determines a complex horizontal map $\Theta^{\tilde{D}_{\mathcal{H}_{\mathbb{C}}}}$, whose image is precisely the original bundle $\mathcal{H}_{\mathbb{C}}$. This establishes a one-to-one correspondence among complex horizontal bundles, complex non-linear connections and complex horizontal maps \cite{AP}. We  note that a complex horizontal bundle is completely determined by its $(1,0)$-part $\mathcal{H}^{1,0}$, yielding the decomposition $T^{1,0}\tilde{M}=\mathcal{H}^{1,0}\oplus\mathcal{V}^{1,0}$, where $\mathcal{V}^{1,0}=\ker d\pi\subset T^{1,0}\tilde{M}$. In general, $\mathcal{H}^{1,0}$ may not comes from a strongly pseudoconvex complex Finsler metric.

However, for a given strongly pseudoconvex complex Finsler metric $F:T^{1,0}M\rightarrow[0,+\infty)$, there is a canonical way to construct $\mathcal{H}^{1,0}$ from $F$, giving rise to the associated complex horizontal bundle $\mathcal{H}_{\mathbb{C}}=\mathcal{H}^{1,0}\oplus\mathcal{H}^{0,1}$. This bundle is called the complex horizontal bundle associated to the Chern-Finsler connection of $F$. More precisely,
$
\mathcal{H}^{1,0}=\mbox{Span}_{\mathbb{C}}\{\delta_1,\cdots,\delta_n\}
$
where
$\delta_\mu=\frac{\partial}{\partial z^\mu}-\Gamma_{;\mu}^\alpha\frac{\partial}{\partial v^\alpha}$ for $\mu=1,\cdots,n$ and the coefficients
\begin{equation}
\Gamma_{;\mu}^\alpha=G^{\bar{\tau}\alpha}\frac{\partial^2F^2}{\partial z^\mu\partial\bar{v}^\tau}\label{HVB}
\end{equation}
are called the complex non-linear connection coefficients of the Chern-Finsler connection associated to $F$.

 Let $\langle \cdot,\cdot\rangle$ denote the Hermitian inner product induced by $F$ on the holomorphic vertical subbundle $\mathcal{V}^{1,0}\cong\mathcal{V}_{\mathbb{R}}$. For any $V=V^\alpha\frac{\partial}{\partial v^\alpha}$, $W=W^\beta\frac{\partial}{\partial v^\beta}$ in $(\mathcal{V}^{1,0})_{(z;v)}$,
\begin{equation}
\langle V, W\rangle=G_{\alpha\bar{\beta}}(z;v)V^\alpha \overline{W^\beta}.
\end{equation}

\begin{theorem}\cite{AP}\label{CFC} Let $F:T^{1,0}M\rightarrow [0,+\infty)$ be a strongly pseudoconvex complex Finsler metric on a complex manifold $M$, and let $\langle\cdot,\cdot\rangle$ be the induced Hermitian structure on $\mathcal{V}^{1,0}$. Then there is a unique complex vertical connection
$$D:\mathcal{X}(\mathcal{V}^{1,0})\rightarrow \mathcal{X}(T_{\mathbb{C}}^\ast\tilde{M}\otimes \mathcal{V}^{1,0})$$ such that
\begin{enumerate}
\item[(i)] For all $X\in T^{1,0}\tilde{M}$ and $V,W\in\mathcal{X}(\mathcal{V}^{1,0})$,
\begin{equation}
X\langle V,W\rangle=\langle D_XV,W\rangle+\langle V,D_{\overline{X}}W\rangle;\label{MC}
\end{equation}
\item[(ii)] $D$ is good.
\end{enumerate}
\end{theorem}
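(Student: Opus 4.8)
The plan is to run the classical construction of the Chern connection of a Hermitian holomorphic vector bundle, now over the complex manifold $\tilde M=T^{1,0}M\setminus\{0\}$ and for the bundle $\mathcal V^{1,0}$ carrying the Hermitian metric $(G_{\alpha\bar\beta})$. A complex vertical connection $D$ is described, in the natural holomorphic frame $\left\{\frac{\partial}{\partial v^1},\dots,\frac{\partial}{\partial v^n}\right\}$ of $\mathcal V^{1,0}$, by connection $1$-forms $\omega_\beta^\alpha\in\mathcal X(T_{\mathbb C}^\ast\tilde M)$ via $D\left(\frac{\partial}{\partial v^\beta}\right)=\omega_\beta^\alpha\otimes\frac{\partial}{\partial v^\alpha}$. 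The first step is to record what goodness says about these forms: following \cite{AP}, goodness forces each $\omega_\beta^\alpha$ to be of bidegree $(1,0)$ on $\tilde M$ (the Finsler analogue of the property that singles out the Chern connection of a Hermitian holomorphic bundle), and, more substantially, it requires the complex non-linear connection that $D$ induces to be a genuine one, with horizontal bundle $\pmb{J}$-invariant, conjugation invariant, and complementary to $\mathcal V^{1,0}$. Introducing the adapted coframe $\{dz^1,\dots,dz^n,\psi^1,\dots,\psi^n\}$ with $\psi^\alpha:=dv^\alpha+\Gamma_{;\mu}^\alpha\,dz^\mu$ then allows one to split $\omega_\beta^\alpha$ into its horizontal ($dz^\mu$) and vertical ($\psi^\gamma$) parts.

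For uniqueness, suppose $D$ as in the statement exists. Evaluating \eqref{MC} on $V=\frac{\partial}{\partial v^\alpha}$ and $W=\frac{\partial}{\partial v^\beta}$, and letting $X$ run over a local frame of $T^{1,0}\tilde M$, gives the identity of $1$-forms
$$dG_{\alpha\bar\beta}=G_{\gamma\bar\beta}\,\omega_\alpha^\gamma+G_{\alpha\bar\gamma}\,\overline{\omega_\beta^\gamma}.$$
Split both sides by bidegree; since goodness makes $\omega_\alpha^\gamma$ of type $(1,0)$ and hence $\overline{\omega_\beta^\gamma}$ of type $(0,1)$, the $(0,1)$-component of this equation is just the complex conjugate of the $(1,0)$-component, which reads $\partial G_{\alpha\bar\beta}=G_{\gamma\bar\beta}\,\omega_\alpha^\gamma$, where $\partial$ denotes the $(1,0)$-part of the exterior derivative on $\tilde M$. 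As $(G_{\alpha\bar\beta})$ is invertible, this forces $\omega_\alpha^\gamma=G^{\bar\beta\gamma}\,\partial G_{\alpha\bar\beta}$, so $D$ is uniquely determined.

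For existence, one turns this around and \emph{defines} $\omega_\alpha^\gamma:=G^{\bar\beta\gamma}\,\partial G_{\alpha\bar\beta}$, with $D$ the associated connection. Three things must be checked: (a) under a change of holomorphic coordinates on $M$, with fibre coordinates transforming by $\tilde v^\alpha=\frac{\partial\tilde z^\alpha}{\partial z^\mu}v^\mu$, the $\omega_\alpha^\gamma$ obey the inhomogeneous transformation law of a connection, so $D$ is globally well defined on $\mathcal V^{1,0}$; (b) $D$ satisfies \eqref{MC}: the $(1,0)$-part of $G_{\gamma\bar\beta}\omega_\alpha^\gamma+G_{\alpha\bar\gamma}\overline{\omega_\beta^\gamma}$ is $G_{\gamma\bar\beta}\omega_\alpha^\gamma=\partial G_{\alpha\bar\beta}$, while, using $\overline{G^{\bar\beta\gamma}}=G^{\bar\gamma\beta}$ and the Hermitian symmetry of $(G_{\alpha\bar\beta})$, its $(0,1)$-part is $\bar\partial G_{\alpha\bar\beta}$, so the sum is $dG_{\alpha\bar\beta}$; (c) $D$ is good — the bidegree condition holds by construction, and one must verify that the horizontal bundle induced by $D$ is the one with non-linear coefficients \eqref{HVB}, hence has the required invariance properties.

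The real obstacle is (c). Goodness is not merely the bidegree constraint on $\omega_\beta^\alpha$ but a statement about the horizontal / non-linear structure associated with $D$, and there is a mild circularity to untangle: the horizontal bundle enters the adapted coframe that defines $\partial$, which defines $\omega_\beta^\alpha$, which in turn induces a horizontal bundle. One must show this loop closes precisely at $\Gamma_{;\mu}^\alpha=G^{\bar\tau\alpha}\frac{\partial^2F^2}{\partial z^\mu\partial\bar v^\tau}$; concretely, imposing $D_X\iota=0$ for horizontal $X$, where $\iota=v^\alpha\frac{\partial}{\partial v^\alpha}$ is the tautological section of $\mathcal V^{1,0}$, and combining this with metric compatibility pins down $\Gamma_{;\mu}^\alpha$. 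Once that consistency is in place, items (a) and (b) are routine Chern-connection bookkeeping.
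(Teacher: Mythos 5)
This statement is quoted from \cite{AP} and the paper supplies no proof of it, so there is nothing in the paper to compare your argument against; I can only assess it as a proof of the cited result. Your overall strategy --- run the classical Chern-connection construction for the Hermitian ``bundle'' $(\mathcal V^{1,0},(G_{\alpha\bar\beta}))$ over $\tilde M$, derive $\omega_\alpha^\gamma=G^{\bar\beta\gamma}\partial G_{\alpha\bar\beta}$ for uniqueness, and then verify that the induced non-linear connection closes up on $\Gamma_{;\mu}^\alpha=G^{\bar\tau\alpha}\frac{\partial^2F^2}{\partial z^\mu\partial\bar v^\tau}$ --- is the right one, and it is how this theorem is proved in the literature. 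Your bookkeeping is also correct: in the adapted coframe, $G^{\bar\tau\alpha}\partial G_{\beta\bar\tau}$ splits exactly into the coefficients \eqref{HVB}--\eqref{HVC}, and the contraction identities $v^\beta\frac{\partial}{\partial v^\gamma}(G_{\beta\bar\tau})=0$ and $v^\beta\frac{\partial}{\partial z^\mu}(G_{\beta\bar\tau})=\frac{\partial^2F^2}{\partial z^\mu\partial\bar v^\tau}$ do make the ``loop'' in item (c) close where you say it does.

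The one load-bearing step you assert rather than establish is that goodness forces each $\omega_\beta^\alpha$ to be of bidegree $(1,0)$. This is not cosmetic: condition \eqref{MC} for $X\in T^{1,0}\tilde M$ yields only the single $(1,0)$-form identity $\partial G_{\alpha\bar\beta}=G_{\gamma\bar\beta}\bigl(\omega_\alpha^\gamma\bigr)^{1,0}+G_{\alpha\bar\gamma}\overline{\bigl(\omega_\beta^\gamma\bigr)^{0,1}}$ (the $X\in T^{0,1}$ version is its conjugate), which has twice as many unknowns as equations; and goodness in the naive sense --- $D\iota$ restricting to the identity on $\mathcal V^{1,0}$, to zero on $\mathcal V^{0,1}$, and having a conjugation- and $\pmb J$-invariant kernel complementary to $\mathcal V_{\mathbb C}$ --- only constrains the contractions $v^\beta\omega_\beta^\alpha$, not the full $(0,1)$-part of $\omega_\beta^\alpha$. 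So metric compatibility plus these contraction conditions alone does not give uniqueness; the vanishing of $\bigl(\omega_\beta^\alpha\bigr)^{0,1}$ in the holomorphic frame is precisely the ``compatibility with the holomorphic structure'' half of the Chern-connection characterization and has to be imported from Abate--Patrizio's actual definition of a (good) complex vertical connection, where it is indeed built in. You should either quote that definition explicitly or add the short argument deriving the type condition from it; as written, this is the only genuine gap, and the remainder of your outline (the explicit formula, the transformation law, metric compatibility, and the consistency of the induced horizontal bundle) is sound.
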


The connection $D$ in Theorem \ref{CFC} is called the \emph{Chern-Finsler connection} of $F$. Its connection $1$-forms $\omega_\beta^\alpha$  are given by
$$\omega_\beta^\alpha=\Gamma_{\beta;\mu}^\alpha dz^\mu+\Gamma_{\beta\gamma}^\alpha\psi^\gamma,$$
 where $\psi^\gamma=dv^\gamma+\Gamma_{;\mu}^\gamma dz^\mu$ and the coefficients
 \begin{equation}
\Gamma_{\beta;\mu}^\alpha=G^{\bar{\tau}\alpha}\delta_\mu(G_{\beta\bar{\tau}}),\quad \Gamma_{\beta\gamma}^\alpha=G^{\bar{\tau}\alpha}\frac{\partial}{\partial v^\gamma}(G_{\beta\bar{\tau}})=\Gamma_{\gamma\beta}^\alpha.\label{HVC}
\end{equation}
are called  the horizontal and vertical connection coefficients of $D$, respectively.

Several important classes of complex Finsler metrics are defined via the horizontal connection coefficients $\Gamma_{\beta;\mu}^\alpha$:
\begin{itemize}
\item $F$ is called a \emph{K\"ahler-Finsler metric} (cf.  \cite{AP}, \cite{Chen-Shen}) if
\begin{equation}
\Gamma_{\beta;\mu}^\alpha-\Gamma_{\mu;\beta}^\alpha=0,\label{KFD}
\end{equation}
which is equivalent to the condition $(\Gamma_{\beta;\mu}^\alpha-\Gamma_{\mu;\beta}^\alpha)v^\beta=0$ .
\item $F$ is called a \emph{weakly K\"ahler-Finsler metric} \cite{AP} if
\begin{equation}
G_\alpha(\Gamma_{\beta;\mu}^\alpha-\Gamma_{\mu;\beta}^\alpha)v^\beta=0;
\end{equation}
\item $F$ is called a \emph{complex Berwald metric} \cite{Aikou-a} if each $\Gamma_{\beta;\mu}^\alpha$ depends only on the base point $z$;
 \item $F$ is called a \emph{complex locally Minkowski metric} if  each $\Gamma_{\beta;\mu}^\alpha$ depends locally only on the fiber coordinate $v$.
 \end{itemize}

A complex Berwald manifold $(M,F)$ is also characterized by being  \emph{modeled on a complex Minkowski space}  \cite{Aikou-b}.

\begin{definition}
Let $F:T^{1,0}M\rightarrow [0,+\infty)$ be a strongly convex complex Finsler metric on a complex manifold $M$. If $F$ is both a K\"ahler-Finsler metric and a complex Berwald metric, then $F$ is called a \emph{strongly convex K\"ahler-Berwald metric}, and  $(M,F)$ is called a \emph{strongly convex K\"ahler-Berwald manifold}.
\end{definition}

\begin{remark}
While K\"ahler metrics represent the optimal symmetry in Hermitian geometry, our recent work  \cite{Zhong-b, Lin-Zhong, Ge-Zhong,Zhong-c} reveals a similar phenomenon in complex Finsler geometry: the only invariant, strongly pseudoconvex metrics admissible on symmetric complex manifolds are K\"ahler-Berwald metrics. This fact establishes K\"ahler-Berwald metrics as the natural counterpart to K\"ahler metrics in Finsler setting and naturally raises the problem of characterizing them among all complex Finsler metrics.
\end{remark}

Let $\langle\langle\cdot,\cdot\rangle\rangle$ denote the symmetric product induced by $F$ on $\mathcal{V}^{1,0}$, defined for $V=V^\alpha\frac{\partial}{\partial v^\alpha},W=W^\beta\frac{\partial}{\partial v^\beta}\in(\mathcal{V}^{1,0})_{(z;v)}$ by
\begin{equation}
\langle\langle V,W\rangle\rangle=G_{\alpha\beta}(z;v)V^\alpha W^\beta.
\end{equation}

\begin{proposition}\cite{AP}\label{PS}
 Let $F:T^{1,0}M\rightarrow [0,+\infty)$ be a strongly convex complex Finsler metric on a complex manifold $M$. Then
\begin{equation}
\langle V^o\vert W^o\rangle=\mbox{Re}\,[\langle V, W\rangle+\langle\langle V,W\rangle\rangle],\quad\forall V,W\in \mathcal{V}^{1,0}.\label{VW-a}
\end{equation}
\end{proposition}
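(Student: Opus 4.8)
\emph{Proof proposal.} The plan is to verify \eqref{VW-a} by a direct computation that rewrites the right-hand side, expressed in the complex fiber coordinates $(v^\alpha)$, as the left-hand side, expressed in the real fiber coordinates $(y^j)$, through the explicit $\mathbb{R}$-isomorphism $^o$. The starting observation is that, since the fiber change of variables $v^\alpha=y^\alpha+iy^{\alpha^\ast}$ is \emph{linear with constant coefficients}, the half-Hessian $\tfrac12\,\partial^2F^2$ of $F^2$ represents one and the same symmetric bilinear form whether it is computed in the $y$-variables or in the $(v,\bar v)$-variables. Concretely, extending $g=g_{jk}\,\delta y^j\otimes\delta y^k=\tfrac12\tfrac{\partial^2F^2}{\partial y^j\partial y^k}\,\delta y^j\otimes\delta y^k$ by $\mathbb{C}$-bilinearity to $\mathcal{V}_{\mathbb{C}}=\mathcal{V}^{1,0}\oplus\mathcal{V}^{0,1}$ and using the Wirtinger relations $\frac{\partial}{\partial y^\alpha}=\frac{\partial}{\partial v^\alpha}+\frac{\partial}{\partial\bar v^\alpha}$ and $\frac{\partial}{\partial y^{\alpha^\ast}}=i\big(\frac{\partial}{\partial v^\alpha}-\frac{\partial}{\partial\bar v^\alpha}\big)$, one computes
\begin{equation*}
g\Big(\tfrac{\partial}{\partial v^\alpha},\tfrac{\partial}{\partial v^\beta}\Big)=\tfrac12 G_{\alpha\beta},\qquad g\Big(\tfrac{\partial}{\partial v^\alpha},\tfrac{\partial}{\partial\bar v^\beta}\Big)=\tfrac12 G_{\alpha\bar\beta},\qquad g\Big(\tfrac{\partial}{\partial\bar v^\alpha},\tfrac{\partial}{\partial\bar v^\beta}\Big)=\tfrac12\overline{G_{\alpha\beta}},
\end{equation*}
where $G_{\alpha\beta}=\tfrac{\partial^2F^2}{\partial v^\alpha\partial v^\beta}$, and where the reality of $F^2$ is used (so that $\tfrac{\partial^2F^2}{\partial\bar v^\alpha\partial\bar v^\beta}=\overline{G_{\alpha\beta}}$ and $\tfrac{\partial^2F^2}{\partial\bar v^\alpha\partial v^\beta}=\overline{G_{\alpha\bar\beta}}=G_{\beta\bar\alpha}$).

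Next I would note that for $V=V^\alpha\frac{\partial}{\partial v^\alpha}\in\mathcal{V}^{1,0}$ the vector $V^o=V+\overline V$ lies in $\mathcal{V}_{\mathbb{R}}\subset\mathcal{V}_{\mathbb{C}}$ and has $(1,0)$-part $V^\alpha\frac{\partial}{\partial v^\alpha}$ and $(0,1)$-part $\overline{V^\alpha}\frac{\partial}{\partial\bar v^\alpha}$, and likewise for $W$. Expanding $\langle V^o\vert W^o\rangle=g(V^o,W^o)$ by $\mathbb{C}$-bilinearity and inserting the values above gives
\begin{align*}
\langle V^o\vert W^o\rangle
&=\tfrac12\big(G_{\alpha\beta}V^\alpha W^\beta+\overline{G_{\alpha\beta}}\,\overline{V^\alpha}\,\overline{W^\beta}\big)
 +\tfrac12\big(G_{\alpha\bar\beta}V^\alpha\overline{W^\beta}+\overline{G_{\alpha\bar\beta}}\,\overline{V^\alpha}W^\beta\big)\\
&=\tfrac12\big(\langle\langle V,W\rangle\rangle+\overline{\langle\langle V,W\rangle\rangle}\big)+\tfrac12\big(\langle V,W\rangle+\overline{\langle V,W\rangle}\big)
 =\text{Re}\,\big[\langle V,W\rangle+\langle\langle V,W\rangle\rangle\big],
\end{align*}
which is precisely \eqref{VW-a}; the factor $\tfrac12$ coming from $g_{jk}=\tfrac12\partial^2_{y^jy^k}F^2$ is exactly cancelled when the two mutually conjugate terms in each bracket combine into a factor $2\,\text{Re}$.

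An equivalent, purely real route would be to write $V^\alpha=a^\alpha+ib^\alpha$, $W^\beta=c^\beta+id^\beta$, use $\big(\frac{\partial}{\partial v^\alpha}\big)^o=\frac{\partial}{\partial y^\alpha}$ and $\big(i\frac{\partial}{\partial v^\alpha}\big)^o=\frac{\partial}{\partial y^{\alpha^\ast}}$ to get $V^o=a^\alpha\frac{\partial}{\partial y^\alpha}+b^\alpha\frac{\partial}{\partial y^{\alpha^\ast}}$, and then expand $\langle V^o\vert W^o\rangle=g_{jk}(V^o)^j(W^o)^k$ after expressing the four blocks $g_{\alpha\beta},g_{\alpha\beta^\ast},g_{\alpha^\ast\beta},g_{\alpha^\ast\beta^\ast}$ of the real fundamental tensor in terms of $\text{Re}\,G_{\alpha\beta},\text{Im}\,G_{\alpha\beta},\text{Re}\,G_{\alpha\bar\beta},\text{Im}\,G_{\alpha\bar\beta}$ via the Wirtinger relations; collecting terms returns the same answer. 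I do not expect a genuine obstacle here — the statement is essentially a coordinate identity — and the only point that must be handled with care is the compatibility of the real and complex fundamental tensors under the linear fiber-coordinate change, in particular keeping straight the reality/symmetry identities $\overline{G_{\alpha\bar\beta}}=G_{\beta\bar\alpha}$ and $\tfrac{\partial^2F^2}{\partial\bar v^\alpha\partial\bar v^\beta}=\overline{G_{\alpha\beta}}$, which is exactly what makes the mixed terms assemble into $\text{Re}\,\langle V,W\rangle$ and the pure terms into $\text{Re}\,\langle\langle V,W\rangle\rangle$.
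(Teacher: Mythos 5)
Your computation is correct: the paper itself gives no proof of Proposition \ref{PS} (it is quoted from \cite{AP}), and your argument is precisely the standard verification — extend $g$ $\mathbb{C}$-bilinearly, use the constant-coefficient Wirtinger relations to get $g\bigl(\tfrac{\partial}{\partial v^\alpha},\tfrac{\partial}{\partial v^\beta}\bigr)=\tfrac12 G_{\alpha\beta}$ and $g\bigl(\tfrac{\partial}{\partial v^\alpha},\tfrac{\partial}{\partial\bar v^\beta}\bigr)=\tfrac12 G_{\alpha\bar\beta}$ (the $\tfrac12$ coming from the differing normalizations $g_{jk}=\tfrac12\partial^2_{y^jy^k}F^2$ versus $G_{\alpha\bar\beta}=\partial^2_{v^\alpha\bar v^\beta}F^2$), and pair conjugate terms into real parts. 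This matches the identity used implicitly throughout Section 2 (e.g.\ in the proof of Lemma \ref{L-a}), and no gap remains.
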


\begin{remark}
In applications of the above identity \eqref{VW-a}, the presence of the symmetric product $\langle\langle V,W\rangle\rangle$ possibly causes inconvenience. Fortunately, this drawback can be overcome by the following Lemma \ref{L-a}.
\end{remark}

More precisely, extending the real inner product $\langle \cdot\vert \cdot\rangle$ to the complexification
$\mathcal{V}_{\mathbb{C}}=\mathcal{V}_{\mathbb{R}}\otimes \mathbb{C}$ in the following natural way:
\begin{equation}
\langle X_1+iY_1\vert X_2+iY_2\rangle=\langle X_1\vert X_2\rangle-\langle Y_1\vert Y_2\rangle+i(\langle X_1\vert Y_2\rangle+\langle Y_1\vert X_2\rangle).\label{CLE}
\end{equation}
This extension (still denoted $\langle\cdot\vert \cdot\rangle$) is $\mathbb{C}$-bilinear. In particular, it is $\mathbb{C}$-bilinear on $\mathcal{V}^{1,0}$ and $\mathcal{V}^{1,0}$, since $\mathcal{V}_{\mathbb{C}}=\mathcal{V}^{1,0}\oplus\mathcal{V}^{0,1}$.

The following lemma plays a crucial role in the proof of Theorem \ref{Th-c}  and Theorem \ref{Th-cc}.
\begin{lemma}\label{L-a}
 Let $F:T^{1,0}M\rightarrow [0,+\infty)$ be a strongly convex complex Finsler metric on a complex manifold $M$. Then
\begin{equation}\label{eq-rh}
\langle V, W\rangle=2\langle V\vert \overline{W}\rangle,\quad \forall V,W\in\mathcal{V}^{1,0}.
\end{equation}
\end{lemma}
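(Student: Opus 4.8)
The plan is to deduce \eqref{eq-rh} from Proposition \ref{PS} by a polarization argument that separates the Hermitian contribution from the symmetric one in \eqref{VW-a}. First I would rewrite the left-hand side of \eqref{VW-a} using the $\mathbb{C}$-bilinear extension \eqref{CLE}. Since $V^o=V+\overline{V}$ and $W^o=W+\overline{W}$, $\mathbb{C}$-bilinearity gives
\[
\langle V^o\vert W^o\rangle=\langle V\vert W\rangle+\langle V\vert\overline{W}\rangle+\langle\overline{V}\vert W\rangle+\langle\overline{V}\vert\overline{W}\rangle .
\]
Because the coefficients $g_{jk}$ of $\langle\cdot\vert\cdot\rangle$ on $\mathcal{V}_{\mathbb{R}}$ are real-valued, conjugation acts only on the components, so $\langle\overline{V}\vert\overline{W}\rangle=\overline{\langle V\vert W\rangle}$ and $\langle\overline{V}\vert W\rangle=\overline{\langle V\vert\overline{W}\rangle}$; hence $\langle V^o\vert W^o\rangle=2\,\mathrm{Re}\,\langle V\vert W\rangle+2\,\mathrm{Re}\,\langle V\vert\overline{W}\rangle$. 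Comparing this with \eqref{VW-a} yields, for all $V,W\in\mathcal{V}^{1,0}$,
\[
\mathrm{Re}\big[\langle V,W\rangle-2\langle V\vert\overline{W}\rangle\big]=\mathrm{Re}\big[2\langle V\vert W\rangle-\langle\langle V,W\rangle\rangle\big].
\]

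The key observation is that the bracket on the left, $H(V,W):=\langle V,W\rangle-2\langle V\vert\overline{W}\rangle$, is sesquilinear (linear in $V$, conjugate-linear in $W$), whereas the bracket on the right, $T(V,W):=2\langle V\vert W\rangle-\langle\langle V,W\rangle\rangle$, is $\mathbb{C}$-bilinear. Replacing $(V,W)$ by $(iV,iW)$ leaves $H(V,W)$ unchanged but turns $T(V,W)$ into $-T(V,W)$; since $\mathrm{Re}\,H=\mathrm{Re}\,T$ both before and after this substitution, we conclude $\mathrm{Re}\,T\equiv 0$, hence also $\mathrm{Re}\,H\equiv 0$. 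Finally, the vanishing of the real part of a sesquilinear form forces the form itself to vanish: replacing $W$ by $iW$ in $\mathrm{Re}\,H(V,W)=0$ and using $H(V,iW)=-iH(V,W)$ gives $\mathrm{Im}\,H(V,W)=0$ as well, so $H\equiv 0$, which is precisely \eqref{eq-rh}. As a cross-check one may instead verify \eqref{eq-rh} directly on the frame $\{\partial/\partial v^\alpha\}$: expanding $\partial/\partial v^\alpha=\tfrac12(\partial/\partial y^\alpha-i\,\partial/\partial y^{\alpha^*})$, using $\mathbb{C}$-bilinearity of $\langle\cdot\vert\cdot\rangle$, and converting the real Hessian $g_{jk}=\tfrac12\,\partial^2F^2/\partial y^j\partial y^k$ back to $v,\bar v$-derivatives via $\partial/\partial y^\alpha=\partial/\partial v^\alpha+\partial/\partial\bar v^\alpha$ and $\partial/\partial y^{\alpha^*}=i(\partial/\partial v^\alpha-\partial/\partial\bar v^\alpha)$, one finds that all the $\partial^2F^2/\partial v^\alpha\partial v^\beta$-type terms cancel, leaving $\langle\partial/\partial v^\alpha\vert\partial/\partial\bar v^\beta\rangle=\tfrac12 G_{\alpha\bar\beta}$; sesquilinearity then gives $\langle V\vert\overline{W}\rangle=\tfrac12 G_{\alpha\bar\beta}V^\alpha\overline{W^\beta}=\tfrac12\langle V,W\rangle$.

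The routine parts — the $\mathbb{C}$-bilinear expansions and, in the alternative route, the change of fibre coordinates — present no real difficulty. The one point that needs care is the separation step: recognizing that Hermitian and $\mathbb{C}$-bilinear forms are distinguished by their behaviour under $(V,W)\mapsto(iV,iW)$, and that a sesquilinear form is determined by its real part. Beyond that it is only bookkeeping with conjugations and factors of $i$, together with the fact that $g_{jk}$ is real-valued so that conjugation commutes with the bracket up to complex conjugation of the components.
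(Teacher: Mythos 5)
Your argument is correct and is essentially the paper's proof in disguise: the paper substitutes the four pairs $(X,Y)$, $(\pmb{J}X,\pmb{J}Y)$, $(X,\pmb{J}Y)$, $(\pmb{J}X,Y)$ into \eqref{VW-a} and solves the resulting system, which is exactly your polarization of $\mathrm{Re}\,H=\mathrm{Re}\,T$ under $(V,W)\mapsto(iV,iW)$ and $W\mapsto iW$, combined with the same $\mathbb{C}$-bilinear expansion of $\langle V\vert\overline{W}\rangle$ via \eqref{CLE}. Your packaging in terms of the opposite behaviour of sesquilinear versus $\mathbb{C}$-bilinear forms under multiplication by $i$ is a clean reorganization of the same computation.
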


\begin{proof}
Let $V,W\in\mathcal{V}^{1,0}$ be written as
\begin{equation}
V=X_o=\frac{1}{2}(X-i\pmb{J}X),\quad W=Y_o=\frac{1}{2}(Y-i\pmb{J}Y)\label{VW}
\end{equation}
for real vertical vector fields $X, Y\in \mathcal{V}_{\mathbb{R}}$. Identifying $X=V^o$, $Y=W^o$, $\pmb{J}X=(iV)^o$, and $\pmb{J}Y=(iW)^o$, and substituting  into \eqref{VW-a}, we obtain
\begin{eqnarray*}
		\langle X\vert Y\rangle&=&\text{Re}\,[\langle V,W\rangle+\langle\!\langle V,W \rangle\!\rangle],\\
\langle \pmb{J}X\vert \pmb{J}Y\rangle&=&\text{Re}\,[\langle V,W\rangle-\langle\!\langle V,W \rangle\!\rangle],\\
\langle X\vert \pmb{J}Y\rangle&=&\text{Im}\,[\langle V,W\rangle-\langle\!\langle V,W \rangle\!\rangle],\\
\langle \pmb{J}X\vert Y\rangle&=&-\text{Im}\,[\langle V,W\rangle+\langle\!\langle V,W \rangle\!\rangle].
\end{eqnarray*}
From these, it follows that
\begin{eqnarray*}
\langle X\vert Y\rangle+\langle \pmb{J}X\vert \pmb{J}Y\rangle&=&2\text{Re}[\langle V,W\rangle],\\
 \langle X\vert \pmb{J}Y\rangle-\langle \pmb{J}X\vert Y\rangle&=&2\text{Im}[\langle V,W\rangle].
\end{eqnarray*}
Therefore,
$$\langle X\vert Y\rangle+\langle \pmb{J}X\vert JY\rangle+i\langle X\vert \pmb{J}Y\rangle-i\langle \pmb{J}X\vert Y\rangle=2\langle V,W\rangle.
$$
On the other hand, using the extension  \eqref{CLE}, we compute
$$
\langle V\vert \overline{W}\rangle=\frac{1}{4}\left\{\langle X\vert Y\rangle+\langle \pmb{X}\vert \pmb{J}Y\rangle+i\langle X\vert \pmb{J}Y\rangle-i\langle \pmb{J}X\vert Y\rangle\right\}.
$$
Comparing the two expressions yields the desired identity \eqref{eq-rh}.

\end{proof}

\begin{remark}
Via the horizontal map $\hat{\Theta}:\mathcal{V}_{\mathbb{R}}\rightarrow\mathcal{H}_{\mathbb{R}}$, the inner product $\langle\cdot\vert \cdot\rangle$ can also be extended to the complexification $\hat{\mathcal{H}}_{\mathbb{C}}:=\mathcal{H}_{\mathbb{R}}\otimes \mathbb{C}$. In general, however, $\hat{\mathcal{H}}_{\mathbb{C}}\neq \mathcal{H}^{1,0}\oplus \mathcal{H}^{0,1}$.
\end{remark}

\section{Parallelism of $\pmb{J}$ with respect to the Cartan connection}

In this section, we mainly investigate the properties of strongly convex complex Finsler manifolds $(M,F)$ under the condition that the canonical complex structure $\pmb{J}$ on the complex manifold $T^{1,0}M$ satisfies some different extent of parallelism with respect to the Cartan connection $\nabla$ associated to $F$.

 Firstly, we obtain the following  necessary and sufficient condition for the  parallelism of $\pmb{J}$ with respect to $\nabla$.

\begin{theorem}\label{Th-ab}
Let $F: T^{1,0}M\rightarrow [0,+\infty)$ be a strongly convex complex Finsler metric on a complex manifold $M$. Then $\nabla \pmb{J}=0$ iff
\begin{eqnarray}
\hat{\Gamma}_{\alpha^\ast   ;j}^\beta&=&-\hat{\Gamma}_{\alpha; j}^{\beta^\ast  },\quad\hat{\Gamma}_{\alpha^\ast   ;j}^{\beta^\ast  }=\Gamma_{\alpha;j}^\beta,\label{eq-ab}\\
\hat{\Gamma}_{\alpha^\ast   j}^\beta&=&-\hat{\Gamma}_{\alpha j}^{\beta^\ast  }, \quad \hat{\Gamma}_{\alpha^\ast   j}^{\beta^\ast  }=\hat{\Gamma}_{\alpha j}^\beta,\label{eq-abc}
\end{eqnarray}
for all $\alpha,\beta=1,\cdots,n$ and $j=1,\cdots,2n$.
\end{theorem}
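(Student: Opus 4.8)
The plan is to compute $\nabla\pmb{J}$ directly in the adapted frame $\{\delta/\delta x^j,\partial/\partial y^j\}$ of $T_{\mathbb{R}}\tilde{M}$ and read off its components. Since the Cartan connection is good (Theorem~\ref{cartan}), it extends uniquely to a linear connection on $T_{\mathbb{R}}\tilde{M}$, still written $\nabla$, which preserves the splitting $T_{\mathbb{R}}\tilde{M}=\mathcal{H}_{\mathbb{R}}\oplus\mathcal{V}_{\mathbb{R}}$ and, through the horizontal map $\hat{\Theta}$ (which identifies $\partial/\partial y^j$ with $\delta/\delta x^j$), acts on $\mathcal{H}_{\mathbb{R}}$ with exactly the same connection $1$-forms $\hat{\omega}_j^{\,k}=\hat{\Gamma}_{j;l}^kdx^l+\hat{\Gamma}_{jl}^k\delta y^l$ as on $\mathcal{V}_{\mathbb{R}}$; explicitly $\nabla_{\delta/\delta x^l}(\partial/\partial y^j)=\hat{\Gamma}_{j;l}^k\partial/\partial y^k$, $\nabla_{\partial/\partial y^l}(\partial/\partial y^j)=\hat{\Gamma}_{jl}^k\partial/\partial y^k$, and the same formulas with $\partial/\partial y$ replaced by $\delta/\delta x$ throughout. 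Next one records $\pmb{J}$ in this frame: on $\mathcal{V}_{\mathbb{R}}$ it is clean, $\pmb{J}(\partial/\partial y^\alpha)=\partial/\partial y^{\alpha^\ast}$ and $\pmb{J}(\partial/\partial y^{\alpha^\ast})=-\partial/\partial y^\alpha$ by \eqref{CS-T}, whereas on $\mathcal{H}_{\mathbb{R}}$ a short computation with $\delta/\delta x^j=\partial/\partial x^j-\hat{\Gamma}_j^k\partial/\partial y^k$ gives $\pmb{J}(\delta/\delta x^\alpha)=\delta/\delta x^{\alpha^\ast}+P_\alpha^\beta\,\partial/\partial y^\beta+Q_\alpha^\beta\,\partial/\partial y^{\beta^\ast}$, where $P_\alpha^\beta=\hat{\Gamma}_{\alpha^\ast}^\beta+\hat{\Gamma}_\alpha^{\beta^\ast}$ and $Q_\alpha^\beta=\hat{\Gamma}_{\alpha^\ast}^{\beta^\ast}-\hat{\Gamma}_\alpha^\beta$; in particular $\mathcal{H}_{\mathbb{R}}$ need not be $\pmb{J}$-invariant a priori (consistent with the fact that $\hat{\mathcal{H}}_{\mathbb{C}}\neq\mathcal{H}^{1,0}\oplus\mathcal{H}^{0,1}$ in general).

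For the direction $\nabla\pmb{J}=0\Rightarrow\eqref{eq-ab},\eqref{eq-abc}$, it is enough to evaluate $\nabla\pmb{J}$ on the vertical frame fields. Because $\pmb{J}$ and $\nabla$ both preserve $\mathcal{V}_{\mathbb{R}}$, the expression $(\nabla_X\pmb{J})(\partial/\partial y^\alpha)=\nabla_X(\partial/\partial y^{\alpha^\ast})-\pmb{J}\,\nabla_X(\partial/\partial y^\alpha)$ is again vertical; expanding with $X=\delta/\delta x^l$ and collecting the coefficients of $\partial/\partial y^\beta$ and of $\partial/\partial y^{\beta^\ast}$ produces precisely $\hat{\Gamma}_{\alpha^\ast;l}^\beta+\hat{\Gamma}_{\alpha;l}^{\beta^\ast}$ and $\hat{\Gamma}_{\alpha^\ast;l}^{\beta^\ast}-\hat{\Gamma}_{\alpha;l}^\beta$, whose vanishing is \eqref{eq-ab}; taking $X=\partial/\partial y^l$ yields \eqref{eq-abc} in the same way. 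The vanishing on $\partial/\partial y^{\alpha^\ast}$ then follows automatically from the identity $(\nabla_X\pmb{J})\circ\pmb{J}=-\pmb{J}\circ(\nabla_X\pmb{J})$, a formal consequence of $\pmb{J}^2=-\mathrm{Id}$.

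Conversely, assume \eqref{eq-ab} and \eqref{eq-abc}. I would first substitute \eqref{eq-ab} into the goodness identity $\hat{\Gamma}_j^k=\hat{\Gamma}_{l;j}^ky^l$ of \eqref{hcp}, using the symmetry $\hat{\Gamma}_{j;l}^k=\hat{\Gamma}_{l;j}^k$, to obtain $\hat{\Gamma}_{\alpha^\ast}^\beta=-\hat{\Gamma}_\alpha^{\beta^\ast}$ and $\hat{\Gamma}_{\alpha^\ast}^{\beta^\ast}=\hat{\Gamma}_\alpha^\beta$, i.e. $P_\alpha^\beta=Q_\alpha^\beta=0$; hence $\pmb{J}$ does preserve $\mathcal{H}_{\mathbb{R}}$, with $\pmb{J}(\delta/\delta x^\alpha)=\delta/\delta x^{\alpha^\ast}$ and $\pmb{J}(\delta/\delta x^{\alpha^\ast})=-\delta/\delta x^\alpha$. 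Then one computes $(\nabla_X\pmb{J})(Y)$ for $X\in\{\delta/\delta x^l,\partial/\partial y^l\}$ and $Y\in\{\partial/\partial y^\alpha,\delta/\delta x^\alpha\}$; by linearity together with $(\nabla_X\pmb{J})\circ\pmb{J}=-\pmb{J}\circ(\nabla_X\pmb{J})$, these frame fields and their $\pmb{J}$-images span $T_{\mathbb{R}}\tilde{M}$, so this suffices. The two vertical-$Y$ cases vanish exactly as in the forward direction; the two horizontal-$Y$ cases are formally identical to them since $\nabla$ acts on $\mathcal{H}_{\mathbb{R}}$ with the same Christoffel symbols, so they vanish by \eqref{eq-ab} and \eqref{eq-abc} as well. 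Therefore $\nabla\pmb{J}=0$.

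The main difficulty is organizational rather than conceptual. One must be careful, first, that $\nabla$ here denotes the extension of the Cartan connection to the full tangent bundle of $\tilde{M}$ and that it acts on $\mathcal{H}_{\mathbb{R}}$ with the same symbols as on $\mathcal{V}_{\mathbb{R}}$ — this is exactly where goodness is used — and, second, that $\mathcal{H}_{\mathbb{R}}$ is \emph{not} assumed $\pmb{J}$-invariant from the outset; the vanishing of the vertical correction terms $P_\alpha^\beta,Q_\alpha^\beta$ must be derived from \eqref{eq-ab} through the nonlinear-connection identity \eqref{hcp}, and without this one cannot close the argument for horizontal arguments. Pinning down $P_\alpha^\beta$ and $Q_\alpha^\beta$ correctly in $\pmb{J}(\delta/\delta x^\alpha)$, and checking that they are the sole obstruction to $\pmb{J}$-invariance of $\mathcal{H}_{\mathbb{R}}$, is the one genuinely delicate bookkeeping step.
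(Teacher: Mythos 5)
Your proof is correct, and its core is the same direct frame computation the paper uses: evaluate $(\nabla_X\pmb{J})(\partial/\partial y^k)$ for $X=\delta/\delta x^j$ and $X=\partial/\partial y^j$, compare the coefficients of $\partial/\partial y^\beta$ and $\partial/\partial y^{\beta^\ast}$, and read off \eqref{eq-ab} and \eqref{eq-abc}. The one place you diverge is in how you read the symbol $\nabla\pmb{J}$. In the paper $\nabla$ is a \emph{vertical} connection, $\nabla:\mathcal{X}(\mathcal{V}_{\mathbb{R}})\rightarrow\mathcal{X}(T_{\mathbb{R}}^\ast\tilde{M}\otimes\mathcal{V}_{\mathbb{R}})$, so $(\nabla_X\pmb{J})(Y)$ is only ever evaluated on $Y\in\mathcal{X}(\mathcal{V}_{\mathbb{R}})$; both directions of the equivalence then follow from the two vertical-argument computations alone, and nothing about $\mathcal{H}_{\mathbb{R}}$ is needed. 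You instead extend $\nabla$ to all of $T_{\mathbb{R}}\tilde{M}$ via the horizontal map and demand parallelism of $\pmb{J}$ on horizontal arguments as well, which forces you to first establish $\pmb{J}$-invariance of $\mathcal{H}_{\mathbb{R}}$ (your $P_\alpha^\beta=Q_\alpha^\beta=0$, obtained correctly from \eqref{eq-ab} by contraction with $y^j$ using symmetry and \eqref{hcp} --- this is exactly the paper's Corollary \ref{cor-a}). That extra layer is sound, and it shows the stronger ``full tangent bundle'' reading of $\nabla\pmb{J}=0$ is in fact equivalent to the vertical one under these hypotheses, but it is not required for the theorem as stated. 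One small quibble: the extension of $\nabla$ to $\mathcal{H}_{\mathbb{R}}$ with the same Christoffel symbols comes from the horizontal map $\hat{\Theta}$ determined by the non-linear connection, not from goodness per se; goodness enters only through the identity \eqref{hcp} that you use to kill $P_\alpha^\beta$ and $Q_\alpha^\beta$.
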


\begin{proof}
Since
$$(\nabla_X \pmb{J} )(Y)=\nabla_X( \pmb{J} Y)- \pmb{J} (\nabla_XY)$$
for all $X\in T_{\mathbb{R}}\tilde{M}$ and $Y\in\mathcal{X}(\mathcal{V}_{\mathbb{R}})$,
the condition that $\nabla\pmb{J} =0$ is  equivalent to
	\begin{equation}
		\nabla_X( \pmb{J} Y)= \pmb{J} (\nabla_XY)\label{pc}
	\end{equation}
	for all $X\in T_{\mathbb{R}}\tilde{M}$  and $Y\in\mathcal{X}(\mathcal{V}_{\mathbb{R}})$.

Taking $X=\frac{\delta}{\delta x^j}, Y=\frac{\partial}{\partial y^k}$ and substituting them into \eqref{pc} and using \eqref{CS-T}, we obtain
	\begin{eqnarray}
		\nabla_{\frac{\delta}{\delta x^j}}\left( \pmb{J} \frac{\partial}{\partial y^k}\right)&=&\left\{
		\begin{array}{ll}
			\hat{\Gamma}_{\alpha^\ast   ;j}^\beta\frac{\partial}{\partial y^\beta}+\hat{\Gamma}_{\alpha^\ast   ;j}^{\beta^\ast  }\frac{\partial}{\partial y^{\beta^\ast  }}, & k=\alpha, \\
			-\hat{\Gamma}_{\alpha; j}^\beta\frac{\partial}{\partial y^\beta}-\hat{\Gamma}_{\alpha ;j}^{\beta^\ast  }\frac{\partial}{\partial y^{\beta^\ast  }}, & k=\alpha^\ast
		\end{array}
		\right.
		\label{eq-ha}
	\end{eqnarray}
	and
	\begin{eqnarray}
		\pmb{J}\left(\nabla_{\frac{\delta}{\delta x^j}}\frac{\partial}{\partial y^k}\right)
		=\left\{
		\begin{array}{ll}
			\hat{\Gamma}_{\alpha; j}^\beta\frac{\partial}{\partial y^{\beta^\ast  }}-\hat{\Gamma}_{\alpha; j}^{\beta^\ast  }\frac{\partial}{\partial y^\beta}, & k=\alpha, \\
			\hat{\Gamma}_{\alpha^\ast   ; j}^\beta\frac{\partial}{\partial y^{\beta^\ast  }}-\hat{\Gamma}_{\alpha^\ast   ; j}^{\beta^\ast  }\frac{\partial}{\partial y^\beta}, & k=\alpha^\ast   .
		\end{array}
		\right.
		\label{eq-hb}
	\end{eqnarray}
Comparing \eqref{eq-ha} and \eqref{eq-hb}, we see  that \eqref{pc} holds for all $X\in\mathcal{H}_{\mathbb{R}}$  and $Y\in\mathcal{X}(\mathcal{V}_{\mathbb{R}})$ iff the equalities in
\eqref{eq-ab} hold.
	
Taking $X=\frac{\partial}{\partial y^j}, Y=\frac{\partial}{\partial y^k}$ and substituting them into \eqref{pc} and using \eqref{CS-T}, we obtain	
 \begin{eqnarray}
	\nabla_{\frac{\partial }{\partial y^j}}\left( \pmb{J} \frac{\partial}{\partial y^k}\right)&=&\left\{
	\begin{array}{ll}
		\hat{\Gamma}_{\alpha^\ast   j}^\beta\frac{\partial}{\partial y^\beta}+\hat{\Gamma}_{\alpha^\ast   j}^{\beta^\ast  }\frac{\partial}{\partial y^{\beta^\ast  }}, & k=\alpha, \\
		-\hat{\Gamma}_{\alpha j}^\beta\frac{\partial}{\partial y^\beta}-\hat{\Gamma}_{\alpha j}^{\beta^\ast  }\frac{\partial}{\partial y^{\beta^\ast  }}, & k=\alpha^\ast
	\end{array}
	\right.
\label{eq-hc}
\end{eqnarray}
and
\begin{eqnarray}
	\pmb{J}\left(\nabla_{\frac{\partial}{\partial y^j}}\frac{\partial}{\partial y^k}\right)
	=\left\{
	\begin{array}{ll}
		\hat{\Gamma}_{\alpha j}^\beta\frac{\partial}{\partial y^{\beta^\ast  }}-\hat{\Gamma}_{\alpha j}^{\beta^\ast  }\frac{\partial}{\partial y^\beta}, & k=\alpha, \\
	\hat{\Gamma}_{\alpha^\ast    j}^\beta\frac{\partial}{\partial y^{\beta^\ast  }}-\hat{\Gamma}_{\alpha^\ast    j}^{\beta^\ast  }\frac{\partial}{\partial y^\beta}, & k=\alpha^\ast   .
	\end{array}
	\right.
\label{eq-hd}
\end{eqnarray}
Comparing \eqref{eq-hc} and \eqref{eq-hd}, we see  that \eqref{pc} holds for all $X\in\mathcal{V}_{\mathbb{R}}$  and $Y\in\mathcal{X}(\mathcal{V}_{\mathbb{R}})$ iff
\eqref{eq-abc} hold.
\end{proof}

\begin{corollary}\label{cor-a}
Let $F: T^{1,0}M\rightarrow [0,+\infty)$ be a strongly convex complex Finsler metric on a complex manifold $M$.
If $\pmb{J}$ is horizontal parallel with respect to $\nabla$, namely $\nabla_X \pmb{J}=0$ for all $X\in\mathcal{H}_{\mathbb{R}}$, then
\begin{eqnarray}
\hat{\Gamma}_{\alpha^\ast   }^\beta=-\hat{\Gamma}_{\alpha}^{\beta^\ast  },\quad \hat{\Gamma}_{\alpha^\ast   }^{\beta^\ast  }=\hat{\Gamma}_{\alpha}^\beta \label{eq-cc}
\end{eqnarray}
for $\alpha=1,\cdots,n$.
\end{corollary}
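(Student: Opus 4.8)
The plan is to combine Theorem~\ref{Th-ab} with the structural identity \eqref{hcp}, which holds precisely because the Cartan connection is good. Since by hypothesis $\pmb{J}$ is horizontal parallel, equation \eqref{pc} holds for all $X\in\mathcal{H}_{\mathbb{R}}$ and $Y\in\mathcal{X}(\mathcal{V}_{\mathbb{R}})$; and, as shown in the proof of Theorem~\ref{Th-ab}, this is equivalent to the horizontal relations \eqref{eq-ab}, namely $\hat{\Gamma}_{\alpha^\ast;j}^\beta=-\hat{\Gamma}_{\alpha;j}^{\beta^\ast}$ and $\hat{\Gamma}_{\alpha^\ast;j}^{\beta^\ast}=\hat{\Gamma}_{\alpha;j}^\beta$, valid for all $\alpha,\beta=1,\dots,n$ and all $j=1,\dots,2n$. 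Note that only the horizontal relations \eqref{eq-ab}, and not the vertical ones \eqref{eq-abc}, are available under this weaker hypothesis, which is exactly what the statement of the corollary reflects.

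Next I would contract these identities with $y^l$. Using the symmetry $\hat{\Gamma}_{j;l}^k=\hat{\Gamma}_{l;j}^k$ of the horizontal Cartan coefficients in their two lower indices together with \eqref{hcp}, which reads $\hat{\Gamma}_j^k=\hat{\Gamma}_{l;j}^k y^l$, one computes
$$\hat{\Gamma}_{\alpha^\ast}^\beta=\hat{\Gamma}_{l;\alpha^\ast}^\beta y^l=\hat{\Gamma}_{\alpha^\ast;l}^\beta y^l=-\hat{\Gamma}_{\alpha;l}^{\beta^\ast}y^l=-\hat{\Gamma}_{l;\alpha}^{\beta^\ast}y^l=-\hat{\Gamma}_\alpha^{\beta^\ast},$$
where the middle equality uses the first relation in \eqref{eq-ab}. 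An entirely analogous computation using the second relation in \eqref{eq-ab} gives
$$\hat{\Gamma}_{\alpha^\ast}^{\beta^\ast}=\hat{\Gamma}_{\alpha^\ast;l}^{\beta^\ast}y^l=\hat{\Gamma}_{\alpha;l}^\beta y^l=\hat{\Gamma}_\alpha^\beta.$$
This is exactly \eqref{eq-cc}, so the corollary follows.

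There is no genuine obstacle here; the corollary is a direct consequence of Theorem~\ref{Th-ab} once one exploits that the Cartan connection is good. The only point that requires a little care is the index bookkeeping: in \eqref{eq-ab} the index $j$ ranges over the full set $1,\dots,2n$, so the contraction with $y^l$ (whose index runs over the same range) is legitimate; and one must correctly invoke the lower-index symmetry of $\hat{\Gamma}_{j;l}^k$ to move the summed index into the slot where \eqref{hcp} applies.
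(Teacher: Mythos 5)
Your proof is correct and follows essentially the same route as the paper's: invoke Theorem~\ref{Th-ab} (whose proof establishes that horizontal parallelism alone is equivalent to \eqref{eq-ab}), then contract with $y^j$ using the lower-index symmetry $\hat{\Gamma}_{j;l}^k=\hat{\Gamma}_{l;j}^k$ and the identity \eqref{hcp}. Your version merely spells out the chain of equalities that the paper compresses into one sentence.
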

\begin{proof}
 By  Theorem  \ref{Th-ab},  $\nabla_X \pmb{J}=0$ for all $X\in\mathcal{H}_{\mathbb{R}}$ iff the equalities in \eqref{eq-ab} hold. Since the Cartann connection coefficients are symmetric with respect to  lower indices, namely $\hat{\Gamma}_{j;l}^k=\hat{\Gamma}_{l;j}^k$, and they satisfy $\hat{\Gamma}_{j;l}^ky^j=\hat{\Gamma}_{l}^k$. Contracting the equalities in \eqref{eq-ab} with $y^j$, we immediately obtain \eqref{eq-cc}.
\end{proof}

\begin{definition}Let $\pmb{J}$ be the canonical structure on the complex manifold $T^{1,0}M$. The real horizontal bundle $\mathcal{H}_{\mathbb{R}}$ is called $ \pmb{J}$-invariant if
$$ \pmb{J}\left(\frac{\delta}{\delta x^\alpha}\right)=\frac{\delta}{\delta x^{\alpha^\ast   }},\quad \pmb{J}\left(\frac{\delta}{\delta x^{\alpha^\ast   }}\right)=-\frac{\delta}{\delta x^\alpha},\quad\alpha=1,\cdots,n.
$$
\end{definition}

\begin{proposition}\cite{AP}
$\mathcal{H}_{\mathbb{R}}$ is $ \pmb{J}$-invariant iff the equalities in \eqref{eq-cc} hold.
\end{proposition}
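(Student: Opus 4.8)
The plan is to verify both implications by one direct computation in the adapted frame, using the explicit expression $\dfrac{\delta}{\delta x^j}=\dfrac{\partial}{\partial x^j}-\hat{\Gamma}_j^k\dfrac{\partial}{\partial y^k}$ together with the action of $\pmb{J}$ recorded in \eqref{CS-T}. First I would apply $\pmb{J}$ to $\dfrac{\delta}{\delta x^\alpha}$ and use $\mathbb{R}$-linearity, splitting the contracted index $k$ into its unbarred and barred ranges $\beta$ and $\beta^\ast$:
\begin{equation*}
\pmb{J}\left(\frac{\delta}{\delta x^\alpha}\right)=\pmb{J}\left(\frac{\partial}{\partial x^\alpha}\right)-\hat{\Gamma}_\alpha^\beta\,\pmb{J}\left(\frac{\partial}{\partial y^\beta}\right)-\hat{\Gamma}_\alpha^{\beta^\ast}\,\pmb{J}\left(\frac{\partial}{\partial y^{\beta^\ast}}\right)=\frac{\partial}{\partial x^{\alpha^\ast}}-\hat{\Gamma}_\alpha^\beta\frac{\partial}{\partial y^{\beta^\ast}}+\hat{\Gamma}_\alpha^{\beta^\ast}\frac{\partial}{\partial y^\beta},
\end{equation*}
while on the other hand $\dfrac{\delta}{\delta x^{\alpha^\ast}}=\dfrac{\partial}{\partial x^{\alpha^\ast}}-\hat{\Gamma}_{\alpha^\ast}^\beta\dfrac{\partial}{\partial y^\beta}-\hat{\Gamma}_{\alpha^\ast}^{\beta^\ast}\dfrac{\partial}{\partial y^{\beta^\ast}}$.

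Next I would compare the two expressions. Since $\left\{\dfrac{\partial}{\partial x^{\alpha^\ast}}\right\}\cup\left\{\dfrac{\partial}{\partial y^\beta}\right\}\cup\left\{\dfrac{\partial}{\partial y^{\beta^\ast}}\right\}$ are part of a local real frame of $T_{\mathbb{R}}\tilde M$, the identity $\pmb{J}\left(\dfrac{\delta}{\delta x^\alpha}\right)=\dfrac{\delta}{\delta x^{\alpha^\ast}}$ holds for all $\alpha$ if and only if the coefficients of $\dfrac{\partial}{\partial y^{\beta^\ast}}$ and of $\dfrac{\partial}{\partial y^\beta}$ match on both sides, i.e. $-\hat{\Gamma}_\alpha^\beta=-\hat{\Gamma}_{\alpha^\ast}^{\beta^\ast}$ and $\hat{\Gamma}_\alpha^{\beta^\ast}=-\hat{\Gamma}_{\alpha^\ast}^\beta$, which is precisely \eqref{eq-cc} (the $\dfrac{\partial}{\partial x^{\alpha^\ast}}$-components agree automatically).

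Finally I would dispose of the remaining defining relation $\pmb{J}\left(\dfrac{\delta}{\delta x^{\alpha^\ast}}\right)=-\dfrac{\delta}{\delta x^\alpha}$ without repeating the computation: since $\pmb{J}^2=-\operatorname{id}$ on $T_{\mathbb{R}}\tilde M$, applying $\pmb{J}$ to the first relation gives $-\dfrac{\delta}{\delta x^\alpha}=\pmb{J}\left(\dfrac{\delta}{\delta x^{\alpha^\ast}}\right)$, and conversely the first relation follows from the second in the same way. Hence the two conditions defining $\pmb{J}$-invariance of $\mathcal{H}_{\mathbb{R}}$ are equivalent to each other and, by the previous step, to \eqref{eq-cc}, proving the proposition. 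There is no genuine obstacle beyond careful bookkeeping of the index ranges $1,\dots,n$ versus $n+1,\dots,2n$; the one point worth isolating is that $\left\{\dfrac{\partial}{\partial y^\beta}\right\}_{\beta=1}^{n}$ and $\left\{\dfrac{\partial}{\partial y^{\beta^\ast}}\right\}_{\beta=1}^{n}$ together span $\mathcal{V}_{\mathbb{R}}$, which is exactly why matching the two vertical components of the compared expressions yields the two separate families of identities in \eqref{eq-cc}.
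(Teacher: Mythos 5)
Your proof is correct and follows essentially the same route as the paper: a direct computation of $\pmb{J}\left(\frac{\delta}{\delta x^\alpha}\right)$ in the adapted frame and a comparison of the vertical components with $\frac{\delta}{\delta x^{\alpha^\ast}}$. The only difference is that you derive the starred relation $\pmb{J}\left(\frac{\delta}{\delta x^{\alpha^\ast}}\right)=-\frac{\delta}{\delta x^\alpha}$ from the unstarred one via $\pmb{J}^2=-\operatorname{id}$ instead of recomputing it, which is a harmless (and slightly cleaner) shortcut.
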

\begin{proof}
 Indeed,
 \begin{eqnarray*}
\pmb{J}\left(\frac{\delta}{\delta x^\alpha}\right)&=&\frac{\partial}{\partial x^{\alpha^\ast   }}-\hat{\Gamma}_{\alpha}^\beta\frac{\partial}{\partial x^{\beta^\ast  }}+\hat{\Gamma}_{\alpha}^{\beta^\ast  }\frac{\partial}{\partial x^\beta},\quad\frac{\delta}{\delta x^{\alpha^\ast   }}
=\frac{\partial}{\partial x^{\alpha^\ast   }}-\hat{\Gamma}_{\alpha^\ast   }^{\beta^\ast  }\frac{\partial}{\partial x^{\beta^\ast  }}-\hat{\Gamma}_{\alpha^\ast   }^\beta\frac{\partial}{\partial x^\beta},\\
  \pmb{J}\left(\frac{\delta}{\delta x^{\alpha^\ast   }}\right)&=&-\frac{\partial}{\partial x^{\alpha}}-\hat{\Gamma}_{\alpha^\ast   }^\beta\frac{\partial}{\partial x^{\beta^\ast  }}+\hat{\Gamma}_{\alpha^\ast   }^{\beta^\ast  }\frac{\partial}{\partial x^\beta},\quad-\frac{\delta}{\delta x^\alpha}  =-\frac{\partial}{\partial x^\alpha}+\hat{\Gamma}_{\alpha}^{\beta^\ast  }\frac{\partial}{\partial x^{\beta^\ast  }}+\hat{\Gamma}_{\alpha}^{\beta}\frac{\partial}{\partial x^\beta}.
 \end{eqnarray*}
Thus $\mathcal{H}_{\mathbb{R}}$ is $ \pmb{J}$-invariant iff the equalities in \eqref{eq-cc} hold.
\end{proof}

 Let $\widetilde{\mathcal{H}}_{\mathbb{C}}=\mathcal{H}_{\mathbb{R}}\otimes\mathbb{C}$ be the complexified horizontal bundle of $\mathcal{H}_{\mathbb{R}}$. Formally, we have
 \begin{eqnarray*}
\left(\frac{\delta}{\delta x^\alpha}\right)_o=\frac{1}{2}\left(\frac{\delta}{\delta x^\alpha}-i \pmb{J}\frac{\delta}{\delta x^\alpha}\right),\,
\left(\frac{\delta}{\delta x^{\alpha^\ast   }}\right)_o=\frac{1}{2}\left(\frac{\delta}{\delta x^{\alpha^\ast   }}-i \pmb{J}\frac{\delta}{\delta x^{\alpha^\ast }}\right), \,\alpha=1,\cdots,n.
\end{eqnarray*}
In general,  $\left(\frac{\delta}{\delta x^\alpha}\right)_o\neq \delta_\alpha$ and $\left(\frac{\delta}{\delta x^{\alpha^\ast   }}\right)_o\neq i\delta_\alpha$, as pointed out on page 114 in \cite{AP}. However, if  $\mathcal{H}_{\mathbb{R}}$ is $ \pmb{J}$-invariant,  then we are able to obtain a complex horizontal bundle, denoted by $\widetilde{\mathcal{H}}^{1,0}$.

For this purpose,  we define
\begin{equation}
\mathcal{N}_{;\alpha}^\beta:=\hat{\Gamma}_{\alpha}^\beta+i\hat{\Gamma}_{\alpha}^{\beta^\ast  }
\end{equation}
and
\begin{equation}
\frac{\delta}{\delta z^\alpha}:=\frac{\partial}{\partial z^\alpha}-\mathcal{N}_{;\alpha}^\beta\frac{\partial}{\partial v^\beta},\quad \alpha=1,\cdots,n.
\end{equation}

\begin{proposition}\label{pro-b}
If $\mathcal{H}_{\mathbb{R}}$ is $ \pmb{J}$-invariant, then
\begin{equation}
\left(\frac{\delta}{\delta x^\alpha}\right)_o=\frac{\delta}{\delta z^\alpha}\quad\text{and}\quad \left(\frac{\delta}{\delta x^{\alpha^\ast   }}\right)_o=i\frac{\delta}{\delta z^\alpha},
\end{equation}
or equivalently
\begin{equation}
\left(\frac{\delta}{\delta z^\alpha}\right)^o=\frac{\delta}{\delta x^\alpha}\quad\text{and}\quad \left(i\frac{\delta}{\delta z^\alpha}\right)^o=\frac{\delta}{\delta x^{\alpha^\ast   }}.
\end{equation}
\end{proposition}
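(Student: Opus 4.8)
The plan is to compute $\left(\frac{\delta}{\delta x^\alpha}\right)_o$ and $\left(\frac{\delta}{\delta x^{\alpha^\ast}}\right)_o$ directly, using the $\mathbb{R}$-linearity of the isomorphism $_o:T_{\mathbb{R}}\tilde{M}\to T^{1,0}\tilde{M}$ together with the elementary identities $\left(\frac{\partial}{\partial x^\alpha}\right)_o=\frac{\partial}{\partial z^\alpha}$, $\left(\frac{\partial}{\partial x^{\alpha^\ast}}\right)_o=i\frac{\partial}{\partial z^\alpha}$, $\left(\frac{\partial}{\partial y^\beta}\right)_o=\frac{\partial}{\partial v^\beta}$, $\left(\frac{\partial}{\partial y^{\beta^\ast}}\right)_o=i\frac{\partial}{\partial v^\beta}$ recorded in Section 2.1. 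The first identity will turn out to hold unconditionally, and only the second one actually requires the $\pmb{J}$-invariance of $\mathcal{H}_{\mathbb{R}}$, entering through the relations \eqref{eq-cc}.

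First I would expand $\frac{\delta}{\delta x^\alpha}=\frac{\partial}{\partial x^\alpha}-\hat{\Gamma}_\alpha^\beta\frac{\partial}{\partial y^\beta}-\hat{\Gamma}_\alpha^{\beta^\ast}\frac{\partial}{\partial y^{\beta^\ast}}$, apply $_o$ termwise, and use the identities above to get $\left(\frac{\delta}{\delta x^\alpha}\right)_o=\frac{\partial}{\partial z^\alpha}-(\hat{\Gamma}_\alpha^\beta+i\hat{\Gamma}_\alpha^{\beta^\ast})\frac{\partial}{\partial v^\beta}$, which is exactly $\frac{\delta}{\delta z^\alpha}$ by the definition of $\mathcal{N}_{;\alpha}^\beta$. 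Then the same computation for $\frac{\delta}{\delta x^{\alpha^\ast}}=\frac{\partial}{\partial x^{\alpha^\ast}}-\hat{\Gamma}_{\alpha^\ast}^\beta\frac{\partial}{\partial y^\beta}-\hat{\Gamma}_{\alpha^\ast}^{\beta^\ast}\frac{\partial}{\partial y^{\beta^\ast}}$ yields $\left(\frac{\delta}{\delta x^{\alpha^\ast}}\right)_o=i\frac{\partial}{\partial z^\alpha}-(\hat{\Gamma}_{\alpha^\ast}^\beta+i\hat{\Gamma}_{\alpha^\ast}^{\beta^\ast})\frac{\partial}{\partial v^\beta}$. Now I invoke the hypothesis: since $\mathcal{H}_{\mathbb{R}}$ is $\pmb{J}$-invariant, the preceding proposition gives \eqref{eq-cc}, i.e. $\hat{\Gamma}_{\alpha^\ast}^\beta=-\hat{\Gamma}_\alpha^{\beta^\ast}$ and $\hat{\Gamma}_{\alpha^\ast}^{\beta^\ast}=\hat{\Gamma}_\alpha^\beta$; substituting, the vertical coefficient becomes $-\hat{\Gamma}_\alpha^{\beta^\ast}+i\hat{\Gamma}_\alpha^\beta=i(\hat{\Gamma}_\alpha^\beta+i\hat{\Gamma}_\alpha^{\beta^\ast})=i\mathcal{N}_{;\alpha}^\beta$, so $\left(\frac{\delta}{\delta x^{\alpha^\ast}}\right)_o=i\big(\frac{\partial}{\partial z^\alpha}-\mathcal{N}_{;\alpha}^\beta\frac{\partial}{\partial v^\beta}\big)=i\frac{\delta}{\delta z^\alpha}$, which is the claim.

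Finally, the equivalent formulation in terms of $^o$ follows at once by applying the inverse isomorphism $^o$ to the two identities, using $(X_o)^o=X$ and the $\mathbb{R}$-linearity of $^o$ (so that $\big(i\frac{\delta}{\delta z^\alpha}\big)^o=\frac{\delta}{\delta x^{\alpha^\ast}}$). There is no genuine obstacle here — the argument is a short linear-algebra computation; the one point requiring care is to keep $\frac{\delta}{\delta z^\alpha}$, which is built from the Cartan nonlinear connection coefficients $\hat{\Gamma}$, distinct from the Chern--Finsler frame $\delta_\alpha$, since $\left(\frac{\delta}{\delta x^\alpha}\right)_o\neq\delta_\alpha$ in general, as noted on p. 114 of \cite{AP}.
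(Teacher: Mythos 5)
Your computation is correct and is essentially the paper's own proof: the paper likewise expands $\frac{\delta}{\delta x^\alpha}$ and $\frac{\delta}{\delta x^{\alpha^\ast}}$ in the coordinate frame, applies $X\mapsto\frac{1}{2}(X-i\pmb{J}X)$ (which is exactly your termwise use of the identities $(\partial/\partial x^\alpha)_o=\partial/\partial z^\alpha$, etc.), and invokes \eqref{eq-cc} only in the second identity to rewrite $\hat{\Gamma}_{\alpha^\ast}^{\beta}+i\hat{\Gamma}_{\alpha^\ast}^{\beta^\ast}$ as $i\mathcal{N}_{;\alpha}^\beta$. Your observation that the first identity holds without the $\pmb{J}$-invariance hypothesis is consistent with the paper's computation.
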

\begin{proof}
Direct computation shows
\begin{eqnarray*}
\left(\frac{\delta}{\delta x^\alpha}\right)_o&=&\frac{1}{2}\left(\frac{\delta}{\delta x^\alpha}-i \pmb{J}\frac{\delta}{\delta x^\alpha}\right)\\
&=&\frac{1}{2}\left[\frac{\partial}{\partial x^\alpha}-\hat{\Gamma}_{\alpha}^\beta\frac{\partial}{\partial y^\beta}-\hat{\Gamma}_{\alpha}^{\beta^\ast  }\frac{\partial}{\partial y^{\beta^\ast  }}-i\left(\frac{\partial}{\partial x^{\alpha^\ast   }}-\hat{\Gamma}_{\alpha}^\beta\frac{\partial}{\partial y^{\beta^\ast  }}+\hat{\Gamma}_{\alpha}^{\beta^\ast  }\frac{\partial}{\partial y^\beta}\right)\right]\\
&=&\frac{1}{2}\left[\left(\frac{\partial}{\partial x^\alpha}-i\frac{\partial}{\partial x^{\alpha^\ast   }}\right)-\left(\hat{\Gamma}_{\alpha}^\beta+i\hat{\Gamma}_{\alpha}^{\beta^\ast  }\right)\frac{\partial}{\partial y^\beta}+i\left(\hat{\Gamma}_{\alpha}^\beta+i\hat{\Gamma}_{\alpha}^{\beta^\ast  }\right)\frac{\partial}{\partial y^{\beta^\ast  }}\right]\\
&=&\frac{1}{2}\left[\left(\frac{\partial}{\partial x^\alpha}-i\frac{\partial}{\partial x^{\alpha^\ast   }}\right)-\left(\hat{\Gamma}_{\alpha}^\beta+i\hat{\Gamma}_{\alpha}^{\beta^\ast  }\right)\left(\frac{\partial}{\partial y^\beta}-i\frac{\partial}{\partial y^{\beta^\ast  }}\right)\right]\\
&=&\frac{\partial}{\partial z^\alpha}-\left(\hat{\Gamma}_{\alpha}^\beta+i\hat{\Gamma}_{\alpha}^{\beta^\ast  }\right)\frac{\partial}{\partial v^\beta}\\
&=&\frac{\delta}{\delta z^\alpha}
\end{eqnarray*}
and
\begin{eqnarray*}
\left(\frac{\delta}{\delta x^{\alpha^\ast   }}\right)_o&=&\frac{1}{2}\left(\frac{\delta}{\delta x^{\alpha^\ast   }}-i \pmb{J}\frac{\delta}{\delta x^{\alpha^\ast   }}\right)\\
&=&\frac{1}{2}\left[\frac{\partial}{\partial x^{\alpha^\ast   }}-\hat{\Gamma}_{\alpha^\ast   }^\beta\frac{\partial}{\partial y^\beta}-\hat{\Gamma}_{\alpha^\ast   }^{\beta^\ast  }\frac{\partial}{\partial y^{\beta^\ast  }}-i\left(-\frac{\partial}{\partial x^\alpha}-\hat{\Gamma}_{\;\alpha^\ast   }^\beta\frac{\partial}{\partial y^{\beta^\ast  }}+\hat{\Gamma}_{\alpha^\ast   }^{\beta^\ast  }\frac{\partial}{\partial y^{\beta}}\right)\right]\\
&=&\frac{1}{2}\left[i\left(\frac{\partial}{\partial x^\alpha}-i\frac{\partial}{\partial x^{\alpha^\ast   }}\right)-\left(\hat{\Gamma}_{\alpha^\ast   }^\beta+i\hat{\Gamma}_{\alpha^\ast   }^{\beta^\ast  }\right)\frac{\partial}{\partial y^\beta}-\left(\hat{\Gamma}_{\alpha^\ast   }^{\beta^\ast  }-i\hat{\Gamma}_{\alpha^\ast   }^\beta\right)\frac{\partial}{\partial y^{\beta^\ast  }}\right]\\
&=&\frac{1}{2}\left[i\left(\frac{\partial}{\partial x^\alpha}-i\frac{\partial}{\partial x^{\alpha^\ast   }}\right)-i\left(\hat{\Gamma}_{\alpha^\ast   }^{\beta^\ast  }-i\hat{\Gamma}_{\alpha^\ast   }^\beta\right)\frac{\partial}{\partial y^\beta}-\left(\hat{\Gamma}_{\alpha^\ast   }^{\beta^\ast  }-i\hat{\Gamma}_{\;\alpha^\ast   }^\beta\right)\frac{\partial}{\partial y^{\beta^\ast  }}\right]\\
&=&i\frac{1}{2}\left[\left(\frac{\partial}{\partial x^\alpha}-i\frac{\partial}{\partial x^{\alpha^\ast   }}\right)-\left(\hat{\Gamma}_{\alpha^\ast   }^{\beta^\ast  }-i\hat{\Gamma}_{\alpha^\ast   }^\beta\right)\left(\frac{\partial}{\partial y^\beta}-i\frac{\partial}{\partial y^{\beta^\ast  }}\right)\right]\\
&=&i\left[\frac{\partial}{\partial z^\alpha}-\left(\hat{\Gamma}_{\alpha^\ast   }^{\beta^\ast  }-i\hat{\Gamma}_{\alpha^\ast   }^\beta\right)\frac{\partial}{\partial v^\beta}\right]\\
&=&i\left[\frac{\partial}{\partial z^\alpha}-\left(\hat{\Gamma}_{\alpha}^{\beta}+i\hat{\Gamma}_{\alpha}^{\beta^\ast  }\right)\frac{\partial}{\partial v^\beta}\right]\\
&=&i\frac{\delta}{\delta z^\alpha}.
\end{eqnarray*}
\end{proof}

Let $\hat{\Gamma}_{j;l}^k$ and $\hat{\Gamma}_{jl}^k$ be the horizontal and vertical Cartan connection coefficients of $F$. Define
\begin{equation}
\mathcal{N}_{\alpha;\gamma}^\beta:=\hat{\Gamma}_{\alpha;\gamma}^\beta+i\hat{\Gamma}_{\alpha;\gamma}^{\beta^\ast  } \quad \text{and }\quad \mathcal{N}_{\alpha\gamma}^\beta:=\hat{\Gamma}_{\alpha\gamma}^\beta+i\hat{\Gamma}_{\alpha\gamma}^{\beta^\ast  },\quad \alpha,\beta,\gamma=1,\cdots,n.
\end{equation}

\begin{remark}\label{R-a}
Since $\hat{\Gamma}_{j;l}^k=\hat{\Gamma}_{l;j}^k$ and $\hat{\Gamma}_{jl}^k=\hat{\Gamma}_{lj}^k$, it is clear that
\begin{equation}
\mathcal{N}_{\alpha;\gamma}^\beta=\mathcal{N}_{\gamma;\alpha}^\beta \quad \mbox{and}\quad \mathcal{N}_{\alpha\gamma}^\beta=\mathcal{N}_{\gamma\alpha}^\beta.\label{SN}
\end{equation}
\end{remark}
\begin{proposition}\label{prop3.3}
If $\nabla_X\pmb{J}=0$ for all $X\in\mathcal{H}_{\mathbb{R}}$, then
\begin{equation}
\mathcal{N}_{\alpha;\gamma}^\beta v^\alpha=\mathcal{N}_{;\gamma}^\beta;
\end{equation}
if $\nabla_Y\pmb{J}=0$ for all $Y\in\mathcal{V}_{\mathbb{R}}$, then
\begin{equation}
\mathcal{N}_{\alpha\gamma}^\beta v^\alpha=0.
\end{equation}
\end{proposition}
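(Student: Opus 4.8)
The plan is to verify both identities by direct computation: write $v^\alpha = y^\alpha + i y^{\alpha^\ast}$, expand each contraction on the left-hand side into its real and imaginary parts, and then recognize those parts using two structural inputs. The first input is the contraction identity \eqref{hcp}, $\hat\Gamma^k_j = \hat\Gamma^k_{l;j}y^l$, together with the homogeneity of $F^2$; the second is the set of symmetry relations forced by the parallelism hypothesis through Theorem \ref{Th-ab}. In both cases the summation index running from $1$ to $2n$ must be split into its unstarred half ($\alpha=1,\dots,n$) and its starred half ($\alpha^\ast=\alpha+n$), after which the relevant entries are replaced using Theorem \ref{Th-ab}.

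For the first assertion I would assume $\nabla_X\pmb J = 0$ for all $X\in\mathcal H_{\mathbb R}$, so that the relations \eqref{eq-ab} of Theorem \ref{Th-ab} hold. Expanding, $\mathcal N^\beta_{\alpha;\gamma}v^\alpha$ has real part $\hat\Gamma^\beta_{\alpha;\gamma}y^\alpha - \hat\Gamma^{\beta^\ast}_{\alpha;\gamma}y^{\alpha^\ast}$ and imaginary part $\hat\Gamma^{\beta^\ast}_{\alpha;\gamma}y^\alpha + \hat\Gamma^\beta_{\alpha;\gamma}y^{\alpha^\ast}$. On the other hand, splitting the sum in \eqref{hcp} over $l=1,\dots,2n$ into the halves $l=\alpha$ and $l=\alpha^\ast$ and then applying \eqref{eq-ab} to the two terms carrying a starred lower index shows that $\hat\Gamma^\beta_\gamma$ equals precisely that real part and $\hat\Gamma^{\beta^\ast}_\gamma$ equals precisely that imaginary part. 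Hence $\mathcal N^\beta_{\alpha;\gamma}v^\alpha = \hat\Gamma^\beta_\gamma + i\hat\Gamma^{\beta^\ast}_\gamma = \mathcal N^\beta_{;\gamma}$, which is the first claim.

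For the second assertion the analogue of \eqref{hcp} is the homogeneity identity $\hat\Gamma^k_{jl}y^j = 0$: since $\hat\Gamma^k_{jl} = \tfrac14 g^{ks}\,\partial^3 F^2/(\partial y^j\partial y^l\partial y^s)$ and $F^2$ is positively homogeneous of degree two in $y$, Euler's relation applied twice annihilates the radial contraction of the third $y$-derivative. Assuming now $\nabla_Y\pmb J = 0$ for all $Y\in\mathcal V_{\mathbb R}$, Theorem \ref{Th-ab} supplies the relations \eqref{eq-abc}. I would split $\hat\Gamma^k_{j\gamma}y^j = 0$ into its $j=\alpha$ and $j=\alpha^\ast$ halves for $k=\beta$ and for $k=\beta^\ast$; substituting \eqref{eq-abc} turns these two vanishing sums into exactly the vanishing of the real part $\hat\Gamma^\beta_{\alpha\gamma}y^\alpha - \hat\Gamma^{\beta^\ast}_{\alpha\gamma}y^{\alpha^\ast}$ and of the imaginary part $\hat\Gamma^{\beta^\ast}_{\alpha\gamma}y^\alpha + \hat\Gamma^\beta_{\alpha\gamma}y^{\alpha^\ast}$ of $\mathcal N^\beta_{\alpha\gamma}v^\alpha$, giving $\mathcal N^\beta_{\alpha\gamma}v^\alpha = 0$.

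The argument is essentially bookkeeping, and there is no analytic obstacle: everything reduces to \eqref{hcp}, the degree-two homogeneity of $F^2$, and Theorem \ref{Th-ab}. The one point that demands care is keeping the index conventions straight — splitting a sum over $l=1,\dots,2n$ into its unstarred and starred halves without sign errors, and correctly tracking the factors of $i$ when converting between the real components $(y^\alpha,y^{\alpha^\ast})$ and the complex component $v^\alpha = y^\alpha + iy^{\alpha^\ast}$.
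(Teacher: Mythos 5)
Your proposal is correct and follows essentially the same route as the paper: expand $v^\alpha=y^\alpha+iy^{\alpha^\ast}$, separate real and imaginary parts, use the relations \eqref{eq-ab} (resp.\ \eqref{eq-abc}) from Theorem \ref{Th-ab} to reassemble the half-range sums into full sums over $j=1,\dots,2n$, and conclude via $\hat{\Gamma}^k_{j;l}y^j=\hat{\Gamma}^k_l$ (resp.\ $\hat{\Gamma}^k_{jl}y^j=0$). Your explicit justification of $\hat{\Gamma}^k_{jl}y^j=0$ by Euler's relation on the degree-two homogeneity of $F^2$ is a small addition the paper leaves implicit, but the argument is otherwise identical.
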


\begin{proof}
Indeed, since $v^\alpha=y^\alpha+iy^{\alpha^\ast}$,
\begin{eqnarray*}
	\mathcal{N}_{\alpha;\gamma}^\beta v^\alpha
	&=&(\hat{\Gamma}_{\alpha;\gamma}^\beta y^\alpha-\hat{\Gamma}_{\alpha;\gamma}^{\beta^\ast  }y^{\alpha^\ast   })+i(\hat{\Gamma}_{\alpha;\gamma}^\beta y^{\alpha^\ast   }+\hat{\Gamma}_{\alpha;\gamma}^{\beta^\ast  }y^\alpha).
\end{eqnarray*}
By assumption $\nabla_X\pmb{J}=0$ for all $X\in\mathcal{H}_{\mathbb{R}}$, thus we have \eqref{eq-ab}. So that
\begin{eqnarray*}
	\mathcal{N}_{\alpha;\gamma}^\beta v^\alpha&=&(\hat{\Gamma}_{\alpha;\gamma}^\beta y^\alpha+\hat{\Gamma}_{\alpha^\ast   ;\gamma}^{\beta}y^{\alpha^\ast   })+i(\hat{\Gamma}_{\alpha^\ast   ;\gamma}^{\beta^\ast  } y^{\alpha^\ast   }+\hat{\Gamma}_{\alpha;\gamma}^{\beta^\ast  }y^\alpha)
=\hat{\Gamma}_\gamma^\beta+i\hat{\Gamma}_\gamma^{\beta^\ast}=\mathcal{N}_{;\gamma}^\beta,
\end{eqnarray*}
since the horizontal Cartan connection coefficients satisfy $\hat{\Gamma}_{j;l}^ky^j=\hat{\Gamma}_{l}^k$.

Similarly,
\begin{eqnarray*}
	\mathcal{N}_{\alpha\gamma}^\beta v^\alpha
	&=&(\hat{\Gamma}_{\alpha\gamma}^\beta y^\alpha-\hat{\Gamma}_{\alpha\gamma}^{\beta^\ast  }y^{\alpha^\ast   })+i(\hat{\Gamma}_{\alpha\gamma}^\beta y^{\alpha^\ast   }+\hat{\Gamma}_{\alpha\gamma}^{\beta^\ast  }y^\alpha).
\end{eqnarray*}
By assumption $\nabla_Y\pmb{J}=0$ for all $Y\in\mathcal{V}_{\mathbb{R}}$,  thus we  have \eqref{eq-abc}. So that
\begin{eqnarray*}
	\mathcal{N}_{\alpha\gamma}^\beta v^\alpha&=&(\hat{\Gamma}_{\alpha\gamma}^\beta y^\alpha+\hat{\Gamma}_{\alpha^\ast \gamma}^{\beta}y^{\alpha^\ast   })+i(\hat{\Gamma}_{\alpha^\ast \gamma}^{\beta^\ast  } y^{\alpha^\ast   }+\hat{\Gamma}_{\alpha\gamma}^{\beta^\ast  }y^\alpha)=\hat{\Gamma}_{j\gamma}^\beta y^j+i\hat{\Gamma}_{j\gamma}^{\beta^\ast}y^j=0,
\end{eqnarray*}
since the vertical Cartan connection coefficients satisfy $\hat{\Gamma}_{jl}^ky^j=0$.

\end{proof}
\begin{proposition} \label{P-3.4} Let $F: T^{1,0}M\rightarrow [0,+\infty)$ be a strongly convex complex Finsler metric on a complex manifold $M$. Let $U_A$ and $U_B$ be local holomorphic coordinate neighborhoods on $M$ with $U_A\cap U_B\neq \emptyset$, such that $(z_A;v_A)=(z_A^1,\cdots, z_A^n; v_A^1,\cdots, v_A^n)$ and $(z_B;v_B)=(z_B^1,\cdots, z_B^n;v_B^1$, $\cdots$, $v_B^n)$ are the induced holomorphic coordinates on $\pi^{-1}(U_A)$ and $\pi^{-1}(U_B)\subset T^{1,0}M$, respectively. If $\nabla \pmb{J}=0$, then
\begin{eqnarray}
\left(\mathcal{N}_{\alpha;\gamma}^\beta\right)_B&=&\frac{\partial z_A^\mu}{\partial z_B^\alpha}\frac{\partial z_A^\nu}{\partial z_B^\gamma}\left(\mathcal{N}_{\mu;\nu}^\delta\right)_A\frac{\partial z_B^\beta}{\partial z_A^\delta}
-\frac{\partial z_A^\mu}{\partial z_B^\alpha}\frac{\partial z_A^\nu}{\partial z_B^\gamma}\frac{\partial^2 z_B^\beta}{\partial z_A^\mu\partial z_A^\nu},\label{cbc-a}\\
\left(\mathcal{N}_{;\gamma}^\beta\right)_B&=&\frac{\partial z_A^\nu}{\partial z_B^\gamma}\left(\mathcal{N}_{;\nu}^\delta\right)_A\frac{\partial z_B^\beta}{\partial z_A^\delta}
-\frac{\partial z_A^\nu}{\partial z_B^\gamma}\frac{\partial^2 z_B^\beta}{\partial z_A^\mu\partial z_A^\nu}v_A^\mu,\label{cbc-b}\\
	\left(\mathcal{N}_{\alpha\gamma}^\beta\right)_B&=&\frac{\partial z_A^\mu}{\partial z_B^\alpha}\frac{\partial z_A^\nu}{\partial z_B^\gamma}\left(\mathcal{N}_{\mu\nu}^\delta\right)_A\frac{\partial z_B^\beta}{\partial z_A^\delta}.\label{cbc-c}
\end{eqnarray}
\end{proposition}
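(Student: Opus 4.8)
The plan is to use the hypothesis $\nabla\pmb{J}=0$ to realize the triple $\bigl(\mathcal{N}_{\alpha;\gamma}^\beta,\mathcal{N}_{;\gamma}^\beta,\mathcal{N}_{\alpha\gamma}^\beta\bigr)$ as the horizontal, non-linear and vertical connection coefficients of a single complex connection on $\mathcal{V}^{1,0}$, and then to read off \eqref{cbc-a}--\eqref{cbc-c} from the general change-of-frame formula for connection $1$-forms. First I would note that, by Theorem \ref{Th-ab} and Corollary \ref{cor-a}, $\nabla$ commutes with $\pmb{J}$ on $\mathcal{X}(\mathcal{V}_{\mathbb{R}})$ and $\mathcal{H}_{\mathbb{R}}$ is $\pmb{J}$-invariant; since the $\mathbb{R}$-isomorphism ${}^{o}:\mathcal{V}^{1,0}\to\mathcal{V}_{\mathbb{R}}$ intertwines multiplication by $i$ with $\pmb{J}$, the Cartan connection descends to a $\mathbb{C}$-linear connection on $\mathcal{V}^{1,0}$, whose connection $1$-forms in the frame $\{\partial/\partial v^\alpha\}$ I denote $\vartheta_\alpha^\beta$. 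Using the rigidity relations \eqref{eq-ab}, \eqref{eq-abc}, \eqref{eq-cc}, the definitions of $\mathcal{N}_{;\alpha}^\beta,\mathcal{N}_{\alpha;\gamma}^\beta,\mathcal{N}_{\alpha\gamma}^\beta$, and the identity $\delta y^\gamma+i\,\delta y^{\gamma^{\ast}}=dv^\gamma+\mathcal{N}_{;\mu}^\gamma dz^\mu=:\widetilde{\psi}^\gamma$ (a short direct computation using \eqref{eq-cc}, parallel to the proof of Proposition \ref{pro-b}), I would show
\[
\vartheta_\alpha^\beta=\mathcal{N}_{\alpha;\gamma}^\beta\,dz^\gamma+\mathcal{N}_{\alpha\gamma}^\beta\,\widetilde{\psi}^\gamma.
\]
This is the step that genuinely uses the hypothesis, and the only care it demands is the bookkeeping with the starred Greek indices.

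I would then invoke coordinate independence. Because $\mathcal{H}_{\mathbb{R}}$ is $\pmb{J}$-invariant, the bundle $\widetilde{\mathcal{H}}^{1,0}:=\operatorname{Span}_{\mathbb{C}}\{\delta/\delta z^\alpha\}$ of Proposition \ref{pro-b} is a well-defined complex subbundle of $T^{1,0}\widetilde{M}$ complementary to $\mathcal{V}^{1,0}$, so the dual splitting of $(1,0)$-forms into a ``$dz$-part'' and a ``$\widetilde{\psi}$-part'' is intrinsic, independent of the chart. Under a holomorphic change of coordinates $z_{A}=z_{A}(z_{B})$ on $M$ the frame $\{\partial/\partial v^\alpha\}$ of $\mathcal{V}^{1,0}\cong\pi^{\ast}T^{1,0}M$ transforms holomorphically, $\partial/\partial v_{B}^\alpha=(\partial z_{A}^\mu/\partial z_{B}^\alpha)\,\partial/\partial v_{A}^\mu$, so with $g_\alpha^\mu=\partial z_{A}^\mu/\partial z_{B}^\alpha$ one has $\vartheta_{B}=g^{-1}\vartheta_{A}g+g^{-1}dg$. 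Moreover $dz_{A}^\mu=g_\alpha^\mu\,dz_{B}^\alpha$, the matrix $g^{-1}dg$ is purely horizontal (a pull-back from $M$, since $g$ is a function of the base point alone), and $\widetilde{\psi}_{A}^\mu=g_\alpha^\mu\,\widetilde{\psi}_{B}^\alpha$; the last fact is immediate from the tensorial transformation $\delta y_{A}^{k}=(\partial x_{A}^{k}/\partial x_{B}^{j})\,\delta y_{B}^{j}$ of the adapted vertical coframe together with the Cauchy--Riemann equations for $z_{A}(z_{B})$.

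Finally I would match the ``$dz$-parts'' and the ``$\widetilde{\psi}$-parts'' on the two sides of $\vartheta_{B}=g^{-1}\vartheta_{A}g+g^{-1}dg$. The $\widetilde{\psi}$-part receives no contribution from $g^{-1}dg$ and gives \eqref{cbc-c}; the $dz$-part gives \eqref{cbc-a} once the inhomogeneous term $g^{-1}dg$ is re-expressed in $z_{A}$-derivatives by differentiating the identity $(\partial z_{B}^\beta/\partial z_{A}^\nu)(\partial z_{A}^\nu/\partial z_{B}^\gamma)=\delta_\gamma^\beta$. Then \eqref{cbc-b} follows by contracting \eqref{cbc-a} with $v_{B}^\alpha$, using $\mathcal{N}_{\alpha;\gamma}^\beta v^\alpha=\mathcal{N}_{;\gamma}^\beta$ from Proposition \ref{prop3.3}, $v_{A}^\mu=g_\alpha^\mu v_{B}^\alpha$, and the same second-order identity contracted with $v$. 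The main obstacle I anticipate is establishing the connection-form identity for $\vartheta_\alpha^\beta$ together with the chart-compatibility of the $dz$/$\widetilde{\psi}$ decomposition, since that is exactly where the rigidity of Theorem \ref{Th-ab} is consumed; everything afterwards is standard connection bookkeeping, paralleling the treatment of the Chern--Finsler coefficients $\Gamma_{\beta;\mu}^\alpha,\Gamma_{;\mu}^\alpha,\Gamma_{\beta\gamma}^\alpha$ in \cite{AP}. An equivalent but more computational route would dispense with the complex connection entirely: start from the classical transformation laws of the real Cartan coefficients $\hat{\Gamma}_{j;l}^{k},\hat{\Gamma}_{j}^{k},\hat{\Gamma}_{jl}^{k}$, substitute the complex index combinations defining the $\mathcal{N}$'s, and collapse the resulting $2n$-fold sums by means of \eqref{eq-ab}--\eqref{eq-cc} and the Cauchy--Riemann equations.
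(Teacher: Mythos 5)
Your primary route is correct but genuinely different from the paper's. You package the triple $(\mathcal{N}_{\alpha;\gamma}^\beta,\mathcal{N}_{;\gamma}^\beta,\mathcal{N}_{\alpha\gamma}^\beta)$ as the connection $1$-form of the complexified Cartan connection restricted to $\mathcal{V}^{1,0}$ — in effect anticipating the content of Theorem \ref{Th-b} — and then read off \eqref{cbc-a}--\eqref{cbc-c} from the standard change-of-frame rule $\vartheta_B=g^{-1}\vartheta_A g+g^{-1}dg$ with $g_\alpha^\mu=\partial z_A^\mu/\partial z_B^\alpha$, together with the chart-independence of the splitting $T^{1,0}\tilde M=\widetilde{\mathcal{H}}^{1,0}\oplus\mathcal{V}^{1,0}$ and the tensoriality $\widetilde{\psi}_A^\mu=g_\alpha^\mu\widetilde{\psi}_B^\alpha$. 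All the supporting facts you need (Proposition \ref{pro-b}, Proposition \ref{prop3.3}, the identification $\widetilde{\psi}^\gamma=\delta y^\gamma+i\,\delta y^{\gamma^\ast}$ via \eqref{eq-cc}, and the second-order identity obtained by differentiating $(\partial z_B^\beta/\partial z_A^\nu)(\partial z_A^\nu/\partial z_B^\gamma)=\delta_\gamma^\beta$) are available without circularity, since the complexified connection is globally defined independently of any transformation law for its coefficients. The paper instead follows exactly the "more computational route" you sketch in your last sentences: it starts from the real transformation laws of $\hat{\Gamma}_{a;c}^b$ and $\hat{\Gamma}_{ac}^b$, substitutes the definitions of the $\mathcal{N}$'s, and collapses the $2n$-fold sums using the Cauchy--Riemann relations for $z_B(z_A)$ together with \eqref{eq-ab}, then obtains \eqref{cbc-b} by the same contraction with $v_B^\alpha$ that you use. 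Your invariant argument buys conceptual transparency and shifts the index bookkeeping into the single verification that $\vartheta_\alpha^\beta=\mathcal{N}_{\alpha;\gamma}^\beta dz^\gamma+\mathcal{N}_{\alpha\gamma}^\beta\widetilde{\psi}^\gamma$ with vanishing $(0,1)$-part (which is where $\nabla\pmb{J}=0$ is consumed, and which is a nontrivial starred-index computation — essentially the proof of Theorem \ref{Th-b}); the paper's route is more elementary and self-contained at this point of the development, needing only \eqref{eq-ab} rather than the full type-$(1,0)$ statement.
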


\begin{proof} Let
\begin{eqnarray*}
z_A^\alpha=x_A^\alpha+ix_A^{\alpha^\ast   },\quad z_B^\alpha=x_B^\alpha+ix_B^{\alpha^\ast   },\quad
v_A^\alpha=y_A^\alpha+iy_A^{\alpha^\ast   },\quad v_B^\alpha=y_B^\alpha+iy_B^{\alpha^\ast   }
\end{eqnarray*}
for $\alpha=1,\cdots,n$. Then $x_A=(x_A^1,\cdots, x_A^{2n})$ and $x_B=(x_B^1,\cdots, x_B^{2n})$ are local real coordinates on $U_A$ and $U_B$, respectively, and $(x_A,y_A)=(x_A^1,\cdots, x_A^{2n};y_A^1,\cdots, y_A^{2n}),(x_B;y_B)=(x_B^1,\cdots,x_B^{2n};y_B^1,\cdots,y_B^{2n})$ are the induced real coordinates on $\pi^{-1}(U_A)$ and $\pi^{-1}(U_B)$, respectively.
Under a change of coordinates on $\pi^{-1}(U_A)\cap \pi^{-1}(U_B)\neq \emptyset$, the horizontal connection coefficients transform as
\begin{equation}\label{eq-clc1}
\left(\hat{\Gamma}_{a;c}^b\right)_B=\sum_{h,k=1}^{2n}\frac{\partial x_A^h}{\partial x_B^a}\frac{\partial x_A^k}{\partial x_B^c}\left[\sum_{j=1}^{2n}\left(\hat{\Gamma}_{h;k}^j\right)_A\frac{\partial x_B^b}{\partial x_A^j}-\frac{\partial^2x_B^b}{\partial x_A^h\partial x_A^k}\right].
\end{equation}
By definition, $\mathcal{N}_{\alpha;\gamma}^\beta=\hat{\Gamma}_{\alpha;\gamma}^\beta+i\hat{\Gamma}_{\alpha;\gamma}^{\beta^\ast  }$, which together with \eqref{eq-clc1} yield
\begin{equation}
\left(\mathcal{N}_{\alpha;\gamma}^\beta\right)_B
=\sum_{h,k=1}^{2n}\frac{\partial x_A^h}{\partial x_B^\alpha}\frac{\partial x_A^k}{\partial x_B^\gamma}\left[\sum_{j=1}^{2n}\left(\hat{\Gamma}_{h;k}^j\right)_A\frac{\partial z_B^\beta}{\partial x_A^j}-\frac{\partial^2z_B^\beta}{\partial x_A^h\partial x_A^k}\right].\label{eq-b}
\end{equation}

Since  $z_B^\beta$ is holomorphic in $z_A^\delta=x_A^\delta+ix_A^{\delta^\ast }$, we have
\begin{equation}\label{eq-zhc}
\frac{\partial z_B^\beta}{\partial x_A^\delta}=\frac{\partial z_B^\beta}{\partial z_A^\delta}\quad \text{and } \quad \frac{\partial z_B^\beta}{\partial x_A^{\delta^\ast}}=i\frac{\partial z_B^\beta}{\partial z_A^\delta}.
\end{equation}
Thus
\begin{eqnarray*}
\sum_{j=1}^{2n}\left(\hat{\Gamma}_{h;k}^j\right)_A\frac{\partial z_B^\beta}{\partial x_A^j}
&=&\sum_{\delta=1}^{n}\left[\left(\hat{\Gamma}_{h;k}^\delta\right)_A\frac{\partial z_B^\beta}{\partial x_A^\delta}
+\left(\hat{\Gamma}_{h;k}^{\delta^\ast }\right)_A\frac{\partial z_B^\beta}{\partial x_A^{\delta^\ast }}\right]\\
&=&\sum_{\delta=1}^n\left[\left(\hat{\Gamma}_{h;k}^\delta\right)_A+i\left(\hat{\Gamma}_{h;k}^{\delta^\ast }\right)_A\right]\frac{\partial z_B^\beta}{\partial z_A^\delta}.
\end{eqnarray*}
Using \eqref{eq-ha},  a direct computation shows that
\begin{eqnarray*}
&&\sum_{h,k=1}^{2n}\frac{\partial x_A^h}{\partial x_B^\alpha}\frac{\partial x_A^k}{\partial x_B^\gamma}\sum_{\delta=1}^n\left[\left(\hat{\Gamma}_{h;k}^\delta\right)_A+i\left(\hat{\Gamma}_{h;k}^{\delta^\ast }\right)_A\right]\\
&=&\sum_{\mu,\nu=1}^n\frac{\partial z_A^\mu}{\partial x_B^\alpha}\frac{\partial z_A^\nu}{\partial x_B^\gamma}\sum_{\delta=1}^n\left[\left(\hat{\Gamma}_{\mu;\nu}^\delta\right)_A+i\left(\hat{\Gamma}_{\mu;\nu}^{\delta^\ast }\right)_A\right]\\
&=&\frac{\partial z_A^\mu}{\partial x_B^\alpha}\frac{\partial z_A^\nu}{\partial x_B^\gamma}\left(\mathcal{N}_{\mu;\nu}^\delta\right)_A.
\end{eqnarray*}
Similarly,
$$
\sum_{h,k=1}^{2n}\frac{\partial x_A^h}{\partial x_B^\alpha}\frac{\partial x_A^k}{\partial x_B^\gamma}\frac{\partial^2z_B^\beta}{\partial x_A^h\partial x_A^k}=\sum_{\mu,\nu=1}^n\frac{\partial z_A^\mu}{\partial x_B^\alpha}\frac{\partial z_A^\nu}{\partial z_B^\gamma}\frac{\partial^2z_B^\beta}{\partial z_A^\mu\partial z_A^\nu}.
$$
This proves \eqref{cbc-a}. Contracting \eqref{cbc-a} with $v_B^\alpha=v_A^\gamma\frac{\partial z_B^\alpha}{\partial z_A^\gamma}$ yields \eqref{cbc-b}.

The vertical connection coefficients $\hat{\Gamma}_{jk}^i$ of the Cartan connection transform as follows \cite{AP}
	$$
	\left(\hat{\Gamma}_{ac}^b\right)_B=\sum_{h,k=1}^{2n}\frac{\partial x_A^h}{\partial x_B^a}\frac{\partial x_A^k}{\partial x_B^c}\left[\sum_{j=1}^{2n}\left(\hat{\Gamma}_{hk}^j\right)_A\frac{\partial x_B^b}{\partial x_A^j}\right].
	$$
Using the definition of $\mathcal{N}_{\alpha\gamma}^\beta$ in \eqref{SN}, we obtain \eqref{cbc-c} by a similar argument.
\end{proof}

If $\nabla_X \pmb{J}=0$ for all $X\in\mathcal{H}_{\mathbb{R}}$, then
the transformation rule \eqref{cbc-a} shows that $\mathcal{N}_{;\gamma}^\beta$ are  coefficients of a complex non-linear connection. In other words,  $\widetilde{\mathcal{H}}^{1,0}$ is a complex horizontal bundle, which is spanned by  $\{\frac{\delta}{\delta z^1},\cdots,\frac{\delta}{\delta z^n}\}$. Now we define
\begin{equation}
\delta v^\alpha=dv^\alpha+\mathcal{N}_{;\beta}^\alpha dz^\beta,\quad \alpha=1,\cdots,n.\label{hb}
\end{equation}
Then $\{\delta v^\alpha\}$ is a local frame for the dual bundle $(\mathcal{V}^{1,0})^\ast$. Thus we obtain the following decompositions
\begin{eqnarray*}
T_{\mathbb{C}}\tilde{M}&=&\widetilde{\mathcal{H}}_{\mathbb{C}}\oplus\mathcal{V}_{\mathbb{C}}=\widetilde{\mathcal{H}}^{1,0}\oplus\widetilde{\mathcal{H}}^{0,1}\oplus\mathcal{V}^{1,0}\oplus\mathcal{V}^{0,1},\\
T_{\mathbb{C}}^\ast\tilde{M}&=&\widetilde{\mathcal{H}}_{\mathbb{C}}^\ast\oplus\mathcal{V}_{\mathbb{C}}^\ast=(\widetilde{\mathcal{H}}^{1,0})^\ast\oplus(\widetilde{\mathcal{H}}^{0,1})^\ast\oplus(\mathcal{V}^{1,0})^\ast\oplus(\mathcal{V}^{0,1})^\ast.
\end{eqnarray*}

   Let $\Theta_{\mathbb{C}}:\mathcal{V}_{\mathbb{C}}\rightarrow \widetilde{\mathcal{H}}_{\mathbb{C}}$ be the complex horizontal map associated to $\widetilde{\mathcal{H}}_{\mathbb{C}}=\widetilde{\mathcal{H}}^{1,0}\oplus\widetilde{\mathcal{H}}^{0,1}$. Locally,
   $$\Theta_{\mathbb{C}}\left(\frac{\partial}{\partial v^\alpha}\right)=\frac{\delta}{\delta z^\alpha}\quad\text{and}\quad\Theta_{\mathbb{C}}\left(\frac{\partial}{\partial \bar{v}^\alpha}\right)=\frac{\delta}{\delta\bar{z}^\alpha},\quad \alpha=1,\cdots,n.$$
   Let $\hat{\Theta}:\mathcal{V}_{\mathbb{R}}\rightarrow \mathcal{H}_{\mathbb{R}}$ be the real horizontal map. Since $^o:\mathcal{V}^{1,0}\rightarrow\mathcal{V}_{\mathbb{R}}$ is an $\mathbb{R}$-isomorphism and $\widetilde{\mathcal{H}}^{1,0}$ is a complex horizontal bundle, we define an $\mathbb{R}$-isomorphism $^\sharp:\widetilde{\mathcal{H}}^{1,0}\rightarrow\mathcal{H}_{\mathbb{R}}$ by
\begin{equation}\label{eq-hrc}
H^\sharp=\hat{\Theta}((\Theta_{\mathbb{C}}^{-1}(H))^o),\quad \forall H\in\widetilde{\mathcal{H}}^{1,0}
\end{equation}
with  inverse  $_\sharp:\mathcal{H}_{\mathbb{R}}\rightarrow \widetilde{\mathcal{H}}^{1,0}$ given by
\begin{equation*}
H_\sharp=\Theta_{\mathbb{C}}((\hat{\Theta}^{-1}(H))_o),\quad \forall H\in\mathcal{H}_{\mathbb{R}}.
\end{equation*}

On the other hand,  $\mathcal{H}^{1,0}$ is a complex horizontal bundle, as in \cite{AP} we denote $\Theta:\mathcal{V}_{\mathbb{C}}\rightarrow \mathcal{H}_{\mathbb{C}}$ the complex horizontal map associated to $\mathcal{H}_{\mathbb{C}}=\mathcal{H}^{1,0}\oplus\mathcal{H}^{0,1}$. Then locally
$$\Theta\left(\frac{\partial}{\partial v^\alpha}\right)=\delta_\alpha\quad\text{and}\quad\Theta\left(\frac{\partial}{\partial \bar{v}^\alpha}\right)=\delta_{\bar{\alpha}},\quad \alpha=1,\cdots,n.$$
Similarly, we can define an $\mathbb{R}$-isomorphism $^o:\mathcal{H}^{1,0}\rightarrow\mathcal{H}_{\mathbb{R}}$ by
\begin{equation}\label{eq-hrc}
H^o=\hat{\Theta}((\Theta^{-1}(H))^o),\quad \forall H\in\mathcal{H}^{1,0}
\end{equation}
with  inverse  $_o:\mathcal{H}_{\mathbb{R}}\rightarrow \mathcal{H}^{1,0}$ given by
\begin{equation*}
H_o=\Theta((\hat{\Theta}^{-1}(H))_o),\quad \forall H\in\mathcal{H}_{\mathbb{R}}.
\end{equation*}

The following proposition shows that if $\pmb{J}$ is horizontal parallel with respect to $\nabla$, then $\pmb{J}$ commutes with $\hat{\Theta}$,   and $_\sharp=_o, ^\sharp=^o$.
\begin{proposition}Let $F: T^{1,0}M\rightarrow [0,+\infty)$ be a strongly convex complex Finsler metric on a complex manifold $M$.
 If $\nabla_X \pmb{J}=0$ for all $X\in\mathcal{H}_{\mathbb{R}}$, then
 $$ \hat{\Theta}\circ \pmb{J}=\pmb{J} \circ \hat{\Theta},\quad\text{and}\quad H^\sharp=H^o,\quad \forall H\in\widetilde{\mathcal{H}}^{1,0}.$$
\end{proposition}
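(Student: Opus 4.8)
The plan is to derive everything from the $\pmb{J}$-invariance of $\mathcal{H}_{\mathbb{R}}$, which is already guaranteed by the hypothesis. Indeed, by Corollary~\ref{cor-a} the condition $\nabla_X\pmb{J}=0$ for all $X\in\mathcal{H}_{\mathbb{R}}$ forces \eqref{eq-cc}, and by the proposition recalled above stating that $\mathcal{H}_{\mathbb{R}}$ is $\pmb{J}$-invariant iff \eqref{eq-cc} holds, this says precisely that $\pmb{J}\!\left(\frac{\delta}{\delta x^\alpha}\right)=\frac{\delta}{\delta x^{\alpha^\ast}}$ and $\pmb{J}\!\left(\frac{\delta}{\delta x^{\alpha^\ast}}\right)=-\frac{\delta}{\delta x^\alpha}$. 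In particular Proposition~\ref{pro-b} becomes available, giving $\left(\frac{\delta}{\delta x^\alpha}\right)_o=\frac{\delta}{\delta z^\alpha}$ and $\left(\frac{\delta}{\delta x^{\alpha^\ast}}\right)_o=i\frac{\delta}{\delta z^\alpha}$; moreover, by Propositions~\ref{prop3.3} and \ref{P-3.4} the coefficients $\mathcal{N}_{;\gamma}^\beta$ transform as those of a complex non-linear connection, so $\widetilde{\mathcal{H}}^{1,0}$, the frame $\left\{\frac{\delta}{\delta z^\alpha}\right\}$ and the horizontal map $\Theta_{\mathbb{C}}$ are legitimately defined in this setting.

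For the identity $\hat{\Theta}\circ\pmb{J}=\pmb{J}\circ\hat{\Theta}$, both sides are $\mathbb{R}$-linear bundle maps $\mathcal{V}_{\mathbb{R}}\to\mathcal{H}_{\mathbb{R}}$, so it suffices to evaluate them on the adapted vertical frame $\left\{\frac{\partial}{\partial y^j}\right\}$. Using $\hat{\Theta}\!\left(\frac{\partial}{\partial y^j}\right)=\frac{\delta}{\delta x^j}$, \eqref{CS-T}, and the $\pmb{J}$-invariance of $\mathcal{H}_{\mathbb{R}}$, one computes $\hat{\Theta}\!\left(\pmb{J}\frac{\partial}{\partial y^\alpha}\right)=\hat{\Theta}\!\left(\frac{\partial}{\partial y^{\alpha^\ast}}\right)=\frac{\delta}{\delta x^{\alpha^\ast}}=\pmb{J}\!\left(\frac{\delta}{\delta x^\alpha}\right)=\pmb{J}\!\left(\hat{\Theta}\frac{\partial}{\partial y^\alpha}\right)$, and likewise both $\hat{\Theta}\!\left(\pmb{J}\frac{\partial}{\partial y^{\alpha^\ast}}\right)$ and $\pmb{J}\!\left(\hat{\Theta}\frac{\partial}{\partial y^{\alpha^\ast}}\right)$ equal $-\frac{\delta}{\delta x^\alpha}$. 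This proves the first assertion.

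For $H^\sharp=H^o$ I would simply unwind the definition of $^\sharp$ on the frame of $\widetilde{\mathcal{H}}^{1,0}$: since $\Theta_{\mathbb{C}}^{-1}\!\left(\frac{\delta}{\delta z^\alpha}\right)=\frac{\partial}{\partial v^\alpha}$, $\left(\frac{\partial}{\partial v^\alpha}\right)^o=\frac{\partial}{\partial y^\alpha}$, and $\hat{\Theta}\!\left(\frac{\partial}{\partial y^\alpha}\right)=\frac{\delta}{\delta x^\alpha}$, one gets $\left(\frac{\delta}{\delta z^\alpha}\right)^\sharp=\frac{\delta}{\delta x^\alpha}$, which equals $\left(\frac{\delta}{\delta z^\alpha}\right)^o$ by Proposition~\ref{pro-b}; carrying out the same computation with $i\frac{\delta}{\delta z^\alpha}$ and extending by $\mathbb{R}$-linearity yields $H^\sharp=H^o$ for every $H\in\widetilde{\mathcal{H}}^{1,0}$. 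Everything here is formal bookkeeping once the $\pmb{J}$-invariance of $\mathcal{H}_{\mathbb{R}}$ is secured; the only place where the hypothesis does real work is in ensuring that $\widetilde{\mathcal{H}}^{1,0}$ is a genuine complex horizontal bundle. I expect the one potentially delicate point — natural to want alongside the statement — to be the identification $\widetilde{\mathcal{H}}^{1,0}=\mathcal{H}^{1,0}$, i.e. $\mathcal{N}_{;\alpha}^\beta=\Gamma_{;\alpha}^\beta$: establishing it requires comparing the complexified real non-linear connection of $F_{\mathbb{R}}$ with the Chern-Finsler non-linear connection of $F$ (it holds, for instance, under the stronger hypothesis $\nabla\pmb{J}=0$ of Theorem~\ref{mth-a}, where the Cartan and Chern-Finsler connections coincide), and it is this comparison, rather than any of the computations above, that would be the main obstacle.
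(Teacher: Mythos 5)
Your proof is correct and follows essentially the same route as the paper's: both arguments reduce to evaluating $\hat{\Theta}\circ\pmb{J}$ and $\pmb{J}\circ\hat{\Theta}$ on the adapted vertical frame and unwinding the definition of $^\sharp$ on $\left\{\frac{\delta}{\delta z^\alpha}, i\frac{\delta}{\delta z^\alpha}\right\}$, with Proposition~\ref{pro-b} supplying the identification $\left(\frac{\delta}{\delta z^\alpha}\right)^o=\frac{\delta}{\delta x^\alpha}$. Your closing remark about $\widetilde{\mathcal{H}}^{1,0}=\mathcal{H}^{1,0}$ is a reasonable caveat but is not needed here, since the statement only involves the real horizontal map $\hat{\Theta}$ and the bundle $\widetilde{\mathcal{H}}^{1,0}$, not the Chern--Finsler horizontal bundle $\mathcal{H}^{1,0}$.
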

\begin{proof}
Indeed,
\begin{eqnarray*}
&&\hat{\Theta}\left(\pmb{J}\frac{\partial}{\partial y^\alpha}\right) =	\hat{\Theta}\left(\frac{\partial}{\partial y^{\alpha^\ast   }}\right)=\frac{\delta}{\delta x^{\alpha^\ast   }}=\pmb{J}\left(	\hat{\Theta}\left(\frac{\partial}{\partial y^\alpha}\right)\right),\\
&&\hat{\Theta}\left(\pmb{J}\frac{\partial}{\partial y^{\alpha^\ast   }}\right) =	-\hat{\Theta}\left(\frac{\partial}{\partial y^{\alpha}}\right)=-\frac{\delta}{\delta x^{\alpha}}=\pmb{J}\left(	\hat{\Theta}\left(\frac{\partial}{\partial y^{\alpha^\ast   }}\right)\right).
	\end{eqnarray*}
Thus $\hat{\Theta}$ commutes with $\pmb{J}$.

Next, since
\begin{eqnarray*}
\left(\frac{\delta}{\delta z^\alpha}\right)^\sharp&=&\hat{\Theta}\left(\left(\Theta_{\mathbb{C}}^{-1}\left(\frac{\delta}{\delta z^\alpha}\right)\right)^o\right)=\hat{\Theta}\left(\left(\frac{\partial}{\partial v^\alpha}\right)^o\right)
=\hat{\Theta}\left(\frac{\partial}{\partial y^\alpha}\right)=\frac{\delta}{\delta x^\alpha},\\
\left(i\frac{\delta}{\delta z^\alpha}\right)^\sharp&=&\hat{\Theta}\left(\left(\Theta_{\mathbb{C}}^{-1}\left(i\frac{\delta}{\delta z^\alpha}\right)\right)^o\right)=\hat{\Theta}\left(\left(i\frac{\partial}{\partial v^\alpha}\right)^o\right)
=\hat{\Theta}\left(\frac{\partial}{\partial y^{\alpha^\ast   }}\right)
=\frac{\delta}{\delta x^{\alpha^\ast   }}.
\end{eqnarray*}
or equivalently
\begin{eqnarray*}
\left(\frac{\delta}{\delta x^\alpha}\right)_\sharp&=&\Theta_{\mathbb{C}}\left(\left(\hat{\Theta}^{-1}\left(\frac{\delta}{\delta x^\alpha}\right)\right)_o\right)=\Theta_{\mathbb{C}}\left(\left(\frac{\partial}{\partial y^\alpha}\right)_o\right)
=\Theta_{\mathbb{C}}\left(\frac{\partial}{\partial v^\alpha}\right)=\frac{\delta}{\delta z^\alpha},\\
\left(\frac{\delta}{\delta x^{\alpha^\ast   }}\right)_\sharp&=&\Theta_{\mathbb{C}}\left(\left(\hat{\Theta}^{-1}\left(\frac{\delta}{\delta x^{\alpha^\ast   }}\right)\right)_o\right)
=\Theta_{\mathbb{C}}\left(\left(\frac{\partial}{\partial y^{\alpha^\ast   }}\right)_o\right)=\Theta_{\mathbb{C}}\left(i\frac{\partial}{\partial v^\alpha}\right)=i\frac{\delta}{\delta z^\alpha}.
\end{eqnarray*}
Hence $^\sharp$ and $_\sharp$ are actually the restrictions of $^o$ and $_o$ to $\widetilde{\mathcal{H}}^{1,0}$ and $\mathcal{H}_{\mathbb{R}}$, respectively.
\end{proof}

\begin{proposition}\label{VT}
Let $F: T^{1,0}M\rightarrow [0,+\infty)$ be a strongly convex complex Finsler metric on a complex manifold $M$.
If $\nabla_X \pmb{J}=0$ for all $X\in\mathcal{H}_{\mathbb{R}}$, then
\begin{equation}
\frac{\partial }{\partial\bar{v}^\gamma}\mathcal{N}_{;\alpha}^\beta\equiv0,\quad \forall\alpha,\beta,\gamma=1,\cdots,n.
\end{equation}
\end{proposition}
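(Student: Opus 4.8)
The plan is to convert the horizontal parallelism of $\pmb{J}$ into a Cauchy--Riemann system for the real geodesic spray coefficients of $F$, and then read off the holomorphic dependence of $\mathcal{N}_{;\alpha}^{\beta}$ on the fiber coordinates. First I would record the algebraic input: since $\nabla_{X}\pmb{J}=0$ for all $X\in\mathcal{H}_{\mathbb{R}}$, Corollary \ref{cor-a} gives the contracted identities \eqref{eq-cc}, namely $\hat{\Gamma}_{\alpha^\ast}^{\beta}=-\hat{\Gamma}_{\alpha}^{\beta^\ast}$ and $\hat{\Gamma}_{\alpha^\ast}^{\beta^\ast}=\hat{\Gamma}_{\alpha}^{\beta}$. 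Because the non-linear connection coefficients of the Cartan connection are fiber derivatives of the real geodesic spray, $\hat{\Gamma}_{j}^{k}=\partial\hat{\mathbb{G}}^{k}/\partial y^{j}$ by \eqref{geodesic-2}, these identities read
$$\frac{\partial\hat{\mathbb{G}}^{\beta}}{\partial y^{\alpha^\ast}}=-\frac{\partial\hat{\mathbb{G}}^{\beta^\ast}}{\partial y^{\alpha}},\qquad\frac{\partial\hat{\mathbb{G}}^{\beta^\ast}}{\partial y^{\alpha^\ast}}=\frac{\partial\hat{\mathbb{G}}^{\beta}}{\partial y^{\alpha}}.$$
Putting $\Phi^{\beta}:=\hat{\mathbb{G}}^{\beta}+i\hat{\mathbb{G}}^{\beta^\ast}$, these are exactly the Cauchy--Riemann equations for $\Phi^{\beta}$ in each fiber variable $v^{\alpha}=y^{\alpha}+iy^{\alpha^\ast}$; applying $\partial/\partial\bar v^{\alpha}=\tfrac12(\partial/\partial y^{\alpha}+i\,\partial/\partial y^{\alpha^\ast})$ and separating real and imaginary parts, they amount to $\partial\Phi^{\beta}/\partial\bar v^{\alpha}\equiv0$ for every $\alpha$, so each $\Phi^{\beta}$ is holomorphic in $v=(v^{1},\cdots,v^{n})$, the base coordinates being treated as parameters.

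It then remains to recognize $\mathcal{N}_{;\alpha}^{\beta}$ as a fiber derivative of $\Phi^{\beta}$. By definition and \eqref{geodesic-2},
$$\mathcal{N}_{;\alpha}^{\beta}=\hat{\Gamma}_{\alpha}^{\beta}+i\hat{\Gamma}_{\alpha}^{\beta^\ast}=\frac{\partial\hat{\mathbb{G}}^{\beta}}{\partial y^{\alpha}}+i\frac{\partial\hat{\mathbb{G}}^{\beta^\ast}}{\partial y^{\alpha}}=\frac{\partial\Phi^{\beta}}{\partial y^{\alpha}},$$
and since $\partial/\partial y^{\alpha}=\partial/\partial v^{\alpha}+\partial/\partial\bar v^{\alpha}$ while $\partial\Phi^{\beta}/\partial\bar v^{\alpha}\equiv0$, this gives $\mathcal{N}_{;\alpha}^{\beta}=\partial\Phi^{\beta}/\partial v^{\alpha}$. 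Finally, since $\partial/\partial\bar v^{\gamma}$ and $\partial/\partial v^{\alpha}$ commute as differential operators,
$$\frac{\partial\mathcal{N}_{;\alpha}^{\beta}}{\partial\bar v^{\gamma}}=\frac{\partial}{\partial\bar v^{\gamma}}\frac{\partial\Phi^{\beta}}{\partial v^{\alpha}}=\frac{\partial}{\partial v^{\alpha}}\frac{\partial\Phi^{\beta}}{\partial\bar v^{\gamma}}=0,$$
which is the assertion.

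I do not expect a genuine obstacle here. The only delicate points are the bookkeeping between the real frame $\{\partial/\partial y^{j}\}_{j=1}^{2n}$ and the complex frame $\{\partial/\partial v^{\alpha}\}_{\alpha=1}^{n}$ through $\partial/\partial v^{\alpha}=\tfrac12(\partial/\partial y^{\alpha}-i\,\partial/\partial y^{\alpha^\ast})$, and the structural observation that makes the whole argument run: because $\hat{\Gamma}_{j}^{k}$ is literally $\partial\hat{\mathbb{G}}^{k}/\partial y^{j}$, the four real blocks $\hat{\Gamma}_{\alpha}^{\beta},\hat{\Gamma}_{\alpha}^{\beta^\ast},\hat{\Gamma}_{\alpha^\ast}^{\beta},\hat{\Gamma}_{\alpha^\ast}^{\beta^\ast}$ package coherently into the single holomorphic object $\partial\Phi^{\beta}/\partial v^{\alpha}$; without that identification, the relations \eqref{eq-cc} would not by themselves force holomorphicity in $v$.
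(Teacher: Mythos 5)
Your proposal is correct and follows essentially the same route as the paper: both convert \eqref{eq-cc} via $\hat{\Gamma}_j^k=\partial\hat{\mathbb{G}}^k/\partial y^j$ into the Cauchy--Riemann condition $\partial(\hat{\mathbb{G}}^\beta+i\hat{\mathbb{G}}^{\beta^\ast})/\partial\bar v^\alpha=0$, identify $\mathcal{N}_{;\alpha}^\beta$ as $\partial(\hat{\mathbb{G}}^\beta+i\hat{\mathbb{G}}^{\beta^\ast})/\partial v^\alpha$, and conclude by commuting fiber derivatives. The only cosmetic difference is how you pass from $\partial/\partial y^\alpha$ to $\partial/\partial v^\alpha$ (you use $\partial/\partial y^\alpha=\partial/\partial v^\alpha+\partial/\partial\bar v^\alpha$; the paper averages the two equal expressions), which is an equivalent bookkeeping step.
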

\begin{proof}
Using \eqref{geodesic-2}, the equalities in \eqref{eq-cc} become
$$
\hat{\Gamma}_{\alpha}^\beta=\frac{\partial {\hat{\mathbb{G}}}^\beta}{\partial y^\alpha}=\frac{\partial {\hat{\mathbb{G}}}^{\beta^\ast }}{\partial y^{\alpha^\ast   }}=\hat{\Gamma}_{\alpha^\ast   }^{\beta^\ast  },\quad \hat{\Gamma}_{\alpha}^{\beta^\ast  }=\frac{\partial {\hat{\mathbb{G}}}^{\beta^\ast  }}{\partial y^\alpha}=-\frac{\partial {\hat{\mathbb{G}}}^\beta}{\partial y^{\alpha^\ast   }}=-\hat{\Gamma}_{\alpha^\ast   }^\beta,
$$
which implies
\begin{equation}
\frac{\partial}{\partial y^\alpha}(\hat{\mathbb{G}}^\beta+i\hat{\mathbb{G}}^{\beta^\ast  })=-i\frac{\partial}{\partial y^{\alpha^\ast   }}(\hat{\mathbb{G}}^\beta+i\hat{\mathbb{G}}^{\beta^\ast  }),\label{hnlc}
\end{equation}
or equivalently
\begin{equation}
 \frac{\partial}{\partial\bar{v}^\alpha}(\hat{\mathbb{G}}^\beta+i\hat{\mathbb{G}}^{\beta^\ast  })=0.\label{pa}
\end{equation}
From \eqref{hnlc}, we have
$$
\mathcal{N}_{;\alpha}^\beta=\hat{\Gamma}_{\alpha}^\beta+i\hat{\Gamma}_{\alpha}^{\beta^\ast  }=\frac{\partial}{\partial y^\alpha}(\hat{\mathbb{G}}^\beta+i\hat{\mathbb{G}}^{\beta^\ast  })=-i\frac{\partial}{\partial y^{\alpha^\ast   }}(\hat{\mathbb{G}}^\beta+i\hat{\mathbb{G}}^{\beta^\ast  }),
$$
so
\begin{equation}
\mathcal{N}_{;\alpha}^\beta=\frac{1}{2}\left(\frac{\partial}{\partial y^\alpha}-i\frac{\partial}{\partial y^{\alpha^\ast   }}\right)\left(\hat{\mathbb{G}}^\beta+i\hat{\mathbb{G}}^{\beta^\ast  }\right)=\frac{\partial}{\partial v^\alpha}(\hat{\mathbb{G}}^\beta+i\hat{\mathbb{G}}^{\beta^\ast  }).\label{pb}
\end{equation}
Then \eqref{pa} and \eqref{pb} give
 \begin{equation}
 \frac{\partial }{\partial\bar{v}^\gamma}\mathcal{N}_{;\alpha}^\beta=\frac{\partial^2}{\partial v^\alpha\partial\bar{v}^\gamma}(\hat{\mathbb{G}}^\beta+i\hat{\mathbb{G}}^{\beta^\ast  })\equiv0.
\end{equation}
\end{proof}

\begin{theorem}\label{Th-b}
Let $F: T^{1,0}M\rightarrow [0,+\infty)$ be a strongly convex complex Finsler metric on a complex manifold $M$. If  $\nabla\pmb{J}=0$,
then the extension $\nabla: \mathcal{X}(\mathcal{V}_{\mathbb{C}})\rightarrow \mathcal{X}(T_{\mathbb{C}}^\ast\tilde{M}\otimes \mathcal{V}_{\mathbb{C}})$ is a good complex vertical connection of type $(1,0)$,  and we have
\begin{eqnarray*}
\nabla_{\frac{\delta}{\delta z^\gamma}}\frac{\partial}{\partial v^\alpha}&=&\mathcal{N}_{\alpha;\gamma}^\beta\frac{\partial}{\partial v^\beta},\quad\nabla_{\frac{\delta}{\delta \bar{z}^\gamma}}\frac{\partial}{\partial v^\alpha}=0
\end{eqnarray*}
and
\begin{eqnarray*}
\nabla_{\frac{\partial}{\partial v^\gamma}}\frac{\partial}{\partial v^\alpha}	&=&\mathcal{N}_{\alpha\gamma}^\beta\frac{\partial}{\partial v^\beta},\quad
	 \nabla_{\frac{\partial}{\partial\bar{v}^\gamma}}\frac{\partial}{\partial v^\alpha}=0
\end{eqnarray*}
and its conjugations.
\end{theorem}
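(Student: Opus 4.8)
The strategy is to reduce the $\mathbb{C}$-linearly extended connection $\nabla$, applied to the holomorphic vertical frame $\{\partial/\partial v^\alpha\}$, to the real Cartan connection acting on the real frame $\{\partial/\partial y^k\}$, and then read off the four displayed identities. The hypothesis $\nabla\pmb{J}=0$ is used twice. First, by Corollary \ref{cor-a} it forces \eqref{eq-cc}, hence $\mathcal{H}_{\mathbb{R}}$ is $\pmb{J}$-invariant and Proposition \ref{pro-b} applies: $\frac{\delta}{\delta z^\gamma}=\frac{1}{2}\left(\frac{\delta}{\delta x^\gamma}-i\frac{\delta}{\delta x^{\gamma^\ast}}\right)$, and, conjugating, $\frac{\delta}{\delta\bar{z}^\gamma}=\frac{1}{2}\left(\frac{\delta}{\delta x^\gamma}+i\frac{\delta}{\delta x^{\gamma^\ast}}\right)$, while always $\frac{\partial}{\partial v^\gamma}=\frac{1}{2}\left(\frac{\partial}{\partial y^\gamma}-i\frac{\partial}{\partial y^{\gamma^\ast}}\right)$. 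Second, since $\frac{\partial}{\partial v^\alpha}=\frac{1}{2}\left(\frac{\partial}{\partial y^\alpha}-i\pmb{J}\frac{\partial}{\partial y^\alpha}\right)$ is the projection of the real field $\frac{\partial}{\partial y^\alpha}$ onto the $i$-eigenbundle $\mathcal{V}^{1,0}$ of $\pmb{J}$, and $\nabla\pmb{J}=0$ means precisely that $\nabla_X$ commutes with the projection $(\ )_o=\frac{1}{2}(\mathrm{id}-i\pmb{J})$, we obtain the key reduction
\[
\nabla_X\frac{\partial}{\partial v^\alpha}=\left(\nabla_X\frac{\partial}{\partial y^\alpha}\right)_o,\qquad\forall\, X\in T_{\mathbb{C}}\tilde{M}.
\]
In particular $\nabla$ preserves $\mathcal{V}_{\mathbb{C}}=\mathcal{V}^{1,0}\oplus\mathcal{V}^{0,1}$; since the $\mathbb{C}$-linear extension of a connection is again a connection (tensorial in $X$, Leibniz in the section), $\nabla$ restricts to a complex vertical connection on $\mathcal{V}^{1,0}$.

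With the reduction in hand the four formulas are pure substitution. Writing the connection $1$-form of the Cartan connection out, $\nabla_{\delta/\delta x^\gamma}\frac{\partial}{\partial y^\alpha}=\hat{\Gamma}^\beta_{\alpha;\gamma}\frac{\partial}{\partial y^\beta}+\hat{\Gamma}^{\beta^\ast}_{\alpha;\gamma}\frac{\partial}{\partial y^{\beta^\ast}}$ and $\nabla_{\partial/\partial y^\gamma}\frac{\partial}{\partial y^\alpha}=\hat{\Gamma}^\beta_{\alpha\gamma}\frac{\partial}{\partial y^\beta}+\hat{\Gamma}^{\beta^\ast}_{\alpha\gamma}\frac{\partial}{\partial y^{\beta^\ast}}$; applying $(\ )_o$ and using $\left(\frac{\partial}{\partial y^{\beta^\ast}}\right)_o=i\frac{\partial}{\partial v^\beta}$ gives $\left(\nabla_{\delta/\delta x^\gamma}\frac{\partial}{\partial y^\alpha}\right)_o=\mathcal{N}^\beta_{\alpha;\gamma}\frac{\partial}{\partial v^\beta}$ and $\left(\nabla_{\partial/\partial y^\gamma}\frac{\partial}{\partial y^\alpha}\right)_o=\mathcal{N}^\beta_{\alpha\gamma}\frac{\partial}{\partial v^\beta}$, straight from the definitions of $\mathcal{N}^\beta_{\alpha;\gamma}$, $\mathcal{N}^\beta_{\alpha\gamma}$. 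Next, \eqref{eq-ab} together with $\hat{\Gamma}^k_{j;l}=\hat{\Gamma}^k_{l;j}$ yields $\left(\nabla_{\delta/\delta x^{\gamma^\ast}}\frac{\partial}{\partial y^\alpha}\right)_o=i\,\mathcal{N}^\beta_{\alpha;\gamma}\frac{\partial}{\partial v^\beta}$, and \eqref{eq-abc} together with $\hat{\Gamma}^k_{jl}=\hat{\Gamma}^k_{lj}$ yields $\left(\nabla_{\partial/\partial y^{\gamma^\ast}}\frac{\partial}{\partial y^\alpha}\right)_o=i\,\mathcal{N}^\beta_{\alpha\gamma}\frac{\partial}{\partial v^\beta}$. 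Substituting into the key reduction with $X=\frac{\delta}{\delta z^\gamma}$, the two half-contributions \emph{add} (since $-\tfrac{i}{2}\cdot i=\tfrac12$), giving $\nabla_{\delta/\delta z^\gamma}\frac{\partial}{\partial v^\alpha}=\mathcal{N}^\beta_{\alpha;\gamma}\frac{\partial}{\partial v^\beta}$; with $X=\frac{\delta}{\delta\bar{z}^\gamma}$ they \emph{cancel}, giving $\nabla_{\delta/\delta\bar{z}^\gamma}\frac{\partial}{\partial v^\alpha}=0$; and the vertical pair $\nabla_{\partial/\partial v^\gamma}\frac{\partial}{\partial v^\alpha}=\mathcal{N}^\beta_{\alpha\gamma}\frac{\partial}{\partial v^\beta}$, $\nabla_{\partial/\partial\bar{v}^\gamma}\frac{\partial}{\partial v^\alpha}=0$ comes out the same way. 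Because $\nabla$ is the $\mathbb{C}$-linear extension of a real connection, $\overline{\nabla_UW}=\nabla_{\bar{U}}\bar{W}$, so the conjugate formulas follow immediately.

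It remains to read off the two structural claims. The vanishings $\nabla_{\delta/\delta\bar{z}^\gamma}\frac{\partial}{\partial v^\alpha}=0=\nabla_{\partial/\partial\bar{v}^\gamma}\frac{\partial}{\partial v^\alpha}$ say exactly that $\nabla$ annihilates the holomorphic frame $\{\partial/\partial v^\alpha\}$ of $\mathcal{V}^{1,0}$ in every anti-holomorphic direction, i.e. $\nabla$ is of type $(1,0)$. Goodness — the complex analogue of the real identity $\hat{\Gamma}^k_j=\hat{\Gamma}^k_{l;j}y^l$ of \eqref{hcp} — follows on contracting the four formulas with $v^\alpha$: by Proposition \ref{prop3.3} we have $\mathcal{N}^\beta_{\alpha;\gamma}v^\alpha=\mathcal{N}^\beta_{;\gamma}$ and $\mathcal{N}^\beta_{\alpha\gamma}v^\alpha=0$, which express precisely the compatibility of $\nabla$ with the complex non-linear connection $\{\mathcal{N}^\alpha_{;\beta}\}$ underlying $\widetilde{\mathcal{H}}^{1,0}$.

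The one genuinely delicate point is the bookkeeping: keeping the index shift $\alpha\leftrightarrow\alpha^\ast$ straight while carefully distinguishing the two roles of $\pmb{J}$ — acting in the \emph{section} slot, where $\nabla\pmb{J}=0$ applies directly through \eqref{pc}, versus merely occurring in the definitions of the adapted frames $\frac{\delta}{\delta x^{\gamma^\ast}}$ and $\frac{\partial}{\partial y^{\gamma^\ast}}$. Once these conventions are fixed and \eqref{eq-ab}, \eqref{eq-abc} are invoked in the right order, each of the four identities is a two-line computation.
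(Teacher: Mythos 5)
Your proof is correct and follows essentially the same route as the paper: both reduce the complex frames $\frac{\delta}{\delta z^\gamma}$, $\frac{\partial}{\partial v^\gamma}$ to real ones via Corollary \ref{cor-a} and Proposition \ref{pro-b}, invoke \eqref{eq-ab}, \eqref{eq-abc} together with the symmetry of the Cartan coefficients to collapse the resulting sums, and obtain goodness from Proposition \ref{prop3.3}. Your only departure is organizational — packaging $\nabla\pmb{J}=0$ as the statement that $\nabla_X$ commutes with the projection $(\;)_o=\tfrac12(\mathrm{id}-i\pmb{J})$, which halves the index bookkeeping relative to the paper's direct four-term expansion but rests on the same identities.
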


\begin{proof} Note that  $\nabla\pmb{J}=0$ implies that $\nabla_X \pmb{J}=0$ for any $X\in\mathcal{X}(\mathcal{H}_{\mathbb{R}})$. Thus by Corollary \ref{cor-a}, we have
\begin{equation*}
\hat{\Gamma}_{\alpha^\ast   }^\beta=-\hat{\Gamma}_{\alpha}^{\beta^\ast  },\quad \hat{\Gamma}_{\alpha^\ast   }^{\beta^\ast  }=\hat{\Gamma}_{\alpha}^\beta
\end{equation*}
for any $\alpha,\beta=1,\cdots,n$. This shows that  $\mathcal{H}_{\mathbb{R}}$ is $\pmb{J}$-invariant.

Extending $\nabla$ to $\mathcal{X}(\mathcal{V}_{\mathbb{C}})$ (still denote by $\nabla$) as follows:
\begin{equation}
\nabla_{X+iY}(V+iW):=\nabla_XV+i\nabla_XW+i\nabla_YV-\nabla_YW,\label{ECC}
\end{equation}
for any $X,Y\in T_{\mathbb{R}}\tilde{M}$ and $V,W\in\mathcal{X}(\mathcal{V}_{\mathbb{C}})$. Then by Proposition \ref{pro-b}, we have
 \begin{eqnarray*}
 \nabla_{\frac{\delta}{\delta z^\gamma}}\frac{\partial}{\partial v^\alpha}
&=&\frac{1}{4}\left(\hat{\Gamma}_{\alpha;\gamma}^\beta-i\hat{\Gamma}_{\alpha^\ast   ;\gamma}^\beta-i\hat{\Gamma}_{\alpha;\gamma^\ast }^\beta-\hat{\Gamma}_{\alpha^\ast   ;\gamma^\ast }^\beta\right)\frac{\partial}{\partial y^\beta}\\
&&+\frac{1}{4}\left(\hat{\Gamma}_{\alpha;\gamma}^{\beta^\ast  }-i\hat{\Gamma}_{\alpha^\ast   ;\gamma}^{\beta^\ast  }-i\hat{\Gamma}_{\alpha;\gamma^\ast }^{\beta^\ast  }-\hat{\Gamma}_{\alpha^\ast   ;\gamma^\ast }^{\beta^\ast  }\right)\frac{\partial}{\partial y^{\beta^\ast  }}.
\end{eqnarray*}
Using $\hat{\Gamma}^k_{j;l}=\hat{\Gamma}^k_{l;j}$ and \eqref{eq-ab}, it follows that
\begin{eqnarray*}
\left(\hat{\Gamma}_{\alpha;\gamma}^\beta\frac{\partial}{\partial y^\beta}-i\hat{\Gamma}_{\alpha^\ast   ;\gamma}^{\beta^\ast  }\frac{\partial}{\partial y^{\beta^\ast  }}\right)
&=&\left(\hat{\Gamma}_{\alpha;\gamma}^\beta\frac{\partial}{\partial y^\beta}-i\hat{\Gamma}_{\alpha;\gamma}^{\beta}\frac{\partial}{\partial y^{\beta^\ast  }}\right)=2\hat{\Gamma}_{\alpha;\gamma}^\beta\frac{\partial}{\partial v^\beta},\\
\left(-i\hat{\Gamma}_{\alpha^\ast   ;\gamma}^\beta\frac{\partial}{\partial y^\beta}+\hat{\Gamma}_{\alpha;\gamma}^{\beta^\ast  }\frac{\partial}{\partial y^{\beta^\ast  }}\right)
&=&\left(i\hat{\Gamma}_{\alpha;\gamma}^{\beta^\ast}\frac{\partial}{\partial y^\beta}+\hat{\Gamma}_{\alpha;\gamma}^{\beta^\ast  }\frac{\partial}{\partial y^{\beta^\ast  }}\right)
=2i\hat{\Gamma}_{\alpha;\gamma}^{\beta^\ast}\frac{\partial}{\partial v^\beta},\\
\left(-i\hat{\Gamma}_{\alpha;\gamma^\ast }^\beta\frac{\partial}{\partial y^\beta}-\hat{\Gamma}_{\alpha^\ast   ;\gamma^\ast }^{\beta^\ast  }\frac{\partial}{\partial y^{\beta^\ast  }}\right)
&=&\left(i\hat{\Gamma}_{\alpha;\gamma }^{\beta^\ast}+\hat{\Gamma}_{\gamma;\alpha}^{\beta^\ast}\frac{\partial}{\partial y^{\beta^\ast  }}\right)=2i\hat{\Gamma}_{\alpha;\gamma }^{\beta^\ast}\frac{\partial}{\partial v^\beta},\\
\left(-\hat{\Gamma}_{\alpha^\ast   ;\gamma^\ast }^\beta\frac{\partial}{\partial y^\beta}-i\hat{\Gamma}_{\alpha;\gamma^\ast }^{\beta^\ast  }\frac{\partial}{\partial y^{\beta^\ast  }}\right)
&=&\left(\hat{\Gamma}_{\alpha   ;\gamma^\ast }^{\beta^\ast}\frac{\partial}{\partial y^\beta}-i\hat{\Gamma}_{\gamma;\alpha}^{\beta  }\frac{\partial}{\partial y^{\beta^\ast  }}\right)
=2\hat{\Gamma}_{\gamma;\alpha}^{\beta  }\frac{\partial}{\partial v^\beta}.
\end{eqnarray*}
Thus
\begin{eqnarray*}
 \nabla_{\frac{\delta}{\delta z^\gamma}}\frac{\partial}{\partial v^\alpha}
&=&(\hat{\Gamma}_{\alpha;\gamma}^\beta+i\hat{\Gamma}_{\alpha;\gamma}^{\beta^\ast  })\frac{\partial}{\partial v^\beta}=\mathcal{N}_{\alpha;\gamma}^\beta\frac{\partial}{\partial v^\beta}.
\end{eqnarray*}
Similarly,
\begin{eqnarray*}
 \nabla_{\frac{\delta}{\delta \bar{z}^\gamma}}\frac{\partial}{\partial v^\alpha}
&=&\frac{1}{4}\left(\hat{\Gamma}_{\alpha;\gamma}^\beta-i\hat{\Gamma}_{\alpha^\ast   ;\gamma}^\beta+i\hat{\Gamma}_{\alpha;\gamma^\ast }^\beta+\hat{\Gamma}_{\alpha^\ast   ;\gamma^\ast }^\beta\right)\frac{\partial}{\partial y^\beta}\\
&&+\frac{1}{4}\left(\hat{\Gamma}_{\alpha;\gamma}^{\beta^\ast  }-i\hat{\Gamma}_{\alpha^\ast   ;\gamma}^{\beta^\ast  }+i\hat{\Gamma}_{\alpha;\gamma^\ast }^{\beta^\ast  }+\hat{\Gamma}_{\alpha^\ast   ;\gamma^\ast }^{\beta^\ast  }\right)\frac{\partial}{\partial y^{\beta^\ast  }}.
\end{eqnarray*}
 Using $\hat{\Gamma}^k_{j;l}=\hat{\Gamma}^k_{l;j}$ and \eqref{eq-ab} again, it follows that
 \begin{eqnarray*}
 \hat{\Gamma}_{\alpha;\gamma}^\beta+\hat{\Gamma}_{\alpha^\ast   ;\gamma^\ast }^\beta&=&0,\quad -\hat{\Gamma}_{\alpha^\ast   ;\gamma}^\beta+\hat{\Gamma}_{\alpha;\gamma^\ast }^\beta=0,\\
 \hat{\Gamma}_{\alpha;\gamma}^{\beta^\ast  }+\hat{\Gamma}_{\alpha^\ast   ;\gamma^\ast }^{\beta^\ast}&=&0,\quad-\hat{\Gamma}_{\alpha^\ast   ;\gamma}^{\beta^\ast  }+\hat{\Gamma}_{\alpha;\gamma^\ast }^{\beta^\ast  }=0.
 \end{eqnarray*}
 Thus
 $$
  \nabla_{\frac{\delta}{\delta \bar{z}^\gamma}}\frac{\partial}{\partial v^\alpha}=0.
 $$

By conjugation, we obtain
$$\nabla_{\frac{\delta}{\delta \bar{z}^\gamma}}\frac{\partial}{\partial \bar{v}^\alpha}=\overline{\mathcal{N}_{\alpha;\gamma}^\beta}\frac{\partial}{\partial \bar{v}^\beta},\quad
\nabla_{\frac{\delta}{\delta z^\gamma}}\frac{\partial}{\partial \bar{v}^\alpha}=0.$$

Similarly,
 \begin{eqnarray*}
	\nabla_{\frac{\partial}{\partial v^\gamma}}\frac{\partial}{\partial v^\alpha}
	&=&\frac{1}{4}\left(\hat{\Gamma}_{\alpha\gamma}^\beta-i\hat{\Gamma}_{\alpha^\ast   \gamma}^\beta-i\hat{\Gamma}_{\;\alpha\gamma^\ast }^\beta-\hat{\Gamma}_{\alpha^\ast   \gamma^\ast }^\beta\right)\frac{\partial}{\partial y^\beta}\\
	&&+\frac{1}{4}\left(\hat{\Gamma}_{\alpha\gamma}^{\beta^\ast  }-i\hat{\Gamma}_{\alpha^\ast   \gamma}^{\beta^\ast  }-i\hat{\Gamma}_{\alpha\gamma^\ast }^{\beta^\ast  }-\hat{\Gamma}_{\alpha^\ast   \gamma^\ast }^{\beta^\ast  }\right)\frac{\partial}{\partial y^{\beta^\ast  }}.
\end{eqnarray*}
Since $\nabla\pmb{J}=0$  also implies that $\nabla_X \pmb{J}=0$ for any $X \in\mathcal{V}_{\mathbb{R}}$, using  the equality $\hat{\Gamma}^k_{jl}=\hat{\Gamma}^k_{lj}$ and \eqref{eq-abc}, the above equality can be simplified as

\begin{eqnarray*}
	\nabla_{\frac{\partial}{\partial v^\gamma}}\frac{\partial}{\partial v^\alpha}
	&=&(\hat{\Gamma}_{\alpha\gamma}^{\beta}+i\hat{\Gamma}_{\alpha\gamma}^{\beta^\ast  })\frac{\partial}{\partial v^\beta}=\mathcal{N}_{\alpha\gamma}^\beta\frac{\partial}{\partial v^\beta}.
\end{eqnarray*}
Finally,
\begin{eqnarray*}
	\nabla_{\frac{\partial}{\partial \bar{v}^\gamma}}\frac{\partial}{\partial v^\alpha}
	&=&\frac{1}{4}\left(\hat{\Gamma}_{\alpha\gamma}^\beta-i\hat{\Gamma}_{\alpha^\ast   \gamma}^\beta+i\hat{\Gamma}_{\alpha\gamma^\ast }^\beta+\hat{\Gamma}_{\alpha^\ast   \gamma^\ast }^\beta\right)\frac{\partial}{\partial y^\beta}\\
	&&+\frac{1}{4}\left(\hat{\Gamma}_{\alpha\gamma}^{\beta^\ast  }-i\hat{\Gamma}_{\alpha^\ast   \gamma}^{\beta^\ast  }+i\hat{\Gamma}_{\alpha\gamma^\ast }^{\beta^\ast  }+\hat{\Gamma}_{\alpha^\ast   \gamma^\ast }^{\beta^\ast  }\right)\frac{\partial}{\partial y^{\beta^\ast  }}.
\end{eqnarray*}
Since by \eqref{eq-abc}, one can check that
\begin{eqnarray*}
\hat{\Gamma}_{\alpha\gamma}^\beta+\hat{\Gamma}_{\alpha^\ast   \gamma^\ast }^\beta&=&0,\quad-\hat{\Gamma}_{\alpha^\ast   \gamma}^\beta+\hat{\Gamma}_{\alpha\gamma^\ast }^\beta=0,\\
\hat{\Gamma}_{\alpha\gamma}^{\beta^\ast  }+\hat{\Gamma}_{\alpha^\ast   \gamma^\ast }^{\beta^\ast  }&=&0,\quad -\hat{\Gamma}_{\alpha^\ast   \gamma}^{\beta^\ast  }+\hat{\Gamma}_{\alpha\gamma^\ast }^{\beta^\ast  }=0.
\end{eqnarray*}
So that $$\nabla_{\frac{\partial}{\partial \bar{v}^\gamma}}\frac{\partial}{\partial v^\alpha}=0.$$
It is clear that
$$\nabla_{\frac{\partial}{\partial \bar{v}^\gamma}}\frac{\partial}{\partial \bar{v}^\alpha}	=\overline{\mathcal{N}_{\alpha\gamma}^\beta}\frac{\partial}{\partial \bar{v}^\beta}\quad\mbox{and}\quad \nabla_{\frac{\partial}{\partial v^\gamma}}\frac{\partial}{\partial \bar{v}^\alpha}=0.$$

Finally, that $\nabla:\mathcal{X}(\mathcal{V}^{1,0})\rightarrow \mathcal{X}(T_{\mathbb{C}}^\ast\tilde{M}\otimes\mathcal{V}^{1,0})$ is a good complex vertical connection follows immediately from Proposition \ref{prop3.3}. This completes the proof.
\end{proof}

\begin{theorem}\label{Th-c}
Let $F: T^{1,0}M\rightarrow [0,+\infty)$ be a strongly convex complex Finsler metric on a complex manifold $M$.  If $\nabla_X \pmb{J}=0$ for $X\in\mathcal{H}_{\mathbb{R}}$, then $F$ must be a K\"ahler-Berwald metric.
\end{theorem}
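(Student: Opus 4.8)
The plan is to show that the hypothesis forces the complex non‑linear connection coefficients $\mathcal{N}^{\beta}_{;\gamma}$, as well as the horizontal connection coefficients coming from $\nabla$, to coincide with those of the Chern--Finsler connection, and then to read off the K\"ahler--Finsler and the complex Berwald conditions separately. First I would record the structural consequences of the hypothesis: by Corollary \ref{cor-a}, $\nabla_X\pmb{J}=0$ for $X\in\mathcal{H}_{\mathbb{R}}$ gives \eqref{eq-cc}, so $\mathcal{H}_{\mathbb{R}}$ is $\pmb{J}$-invariant and $\frac{\delta}{\delta z^{\gamma}}=(\frac{\delta}{\delta x^{\gamma}})_o$ is well defined (Proposition \ref{pro-b}). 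Next I would compute only the \emph{horizontal} covariant derivatives exactly as in the proof of Theorem \ref{Th-b}: that portion of the argument uses solely the symmetry $\hat{\Gamma}^{k}_{j;l}=\hat{\Gamma}^{k}_{l;j}$ together with \eqref{eq-ab}, both of which are available here, so one obtains
\[
\nabla_{\frac{\delta}{\delta z^{\gamma}}}\frac{\partial}{\partial v^{\alpha}}=\mathcal{N}^{\beta}_{\alpha;\gamma}\frac{\partial}{\partial v^{\beta}},\qquad \nabla_{\frac{\delta}{\delta\bar z^{\gamma}}}\frac{\partial}{\partial v^{\alpha}}=0,
\]
and, by conjugation, $\nabla_{\frac{\delta}{\delta z^{\gamma}}}\frac{\partial}{\partial\bar v^{\alpha}}=0$. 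Note that the vertical derivatives $\nabla_{\partial/\partial v^{\gamma}}\partial/\partial v^{\alpha}$ need \emph{not} be of type $(1,0)$ in the absence of \eqref{eq-abc}, which is why I would not attempt to invoke the uniqueness of the Chern--Finsler connection directly.

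The heart of the proof is the next step. Extending the metric compatibility of the Cartan connection $\mathbb{C}$-bilinearly via \eqref{CLE} and \eqref{ECC}, and applying it to $\partial/\partial v^{\alpha}$ and $\partial/\partial\bar v^{\delta}$ in the direction $\frac{\delta}{\delta z^{\gamma}}$, together with Lemma \ref{L-a} (so that $\langle\partial/\partial v^{\alpha}\,|\,\partial/\partial\bar v^{\delta}\rangle=\tfrac12 G_{\alpha\bar\delta}$) and the formulas just displayed, I expect to obtain
\[
\frac{\delta}{\delta z^{\gamma}}\big(G_{\alpha\bar\delta}\big)=\mathcal{N}^{\beta}_{\alpha;\gamma}\,G_{\beta\bar\delta}.
\]
Contracting with $v^{\alpha}$ and using Proposition \ref{prop3.3} on the right, together with the homogeneity identities $v^{\alpha}G_{\alpha\bar\delta}=\partial_{\bar v^{\delta}}F^{2}$ and hence $v^{\alpha}\partial_{v^{\tau}}G_{\alpha\bar\delta}=0$ on the left, this should collapse to
\[
\mathcal{N}^{\beta}_{;\gamma}=G^{\bar\delta\beta}\frac{\partial^{2}F^{2}}{\partial z^{\gamma}\partial\bar v^{\delta}}=\Gamma^{\beta}_{;\gamma}.
\]
Thus $\widetilde{\mathcal{H}}^{1,0}$ coincides with the Chern--Finsler horizontal bundle $\mathcal{H}^{1,0}$, i.e. $\frac{\delta}{\delta z^{\gamma}}=\delta_{\gamma}$, and the preceding identity becomes $\mathcal{N}^{\beta}_{\alpha;\gamma}=G^{\bar\delta\beta}\delta_{\gamma}(G_{\alpha\bar\delta})=\Gamma^{\beta}_{\alpha;\gamma}$.

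It then remains to extract the two defining properties. Since the Cartan coefficients are symmetric in their lower indices, Remark \ref{R-a} gives $\mathcal{N}^{\beta}_{\alpha;\gamma}=\mathcal{N}^{\beta}_{\gamma;\alpha}$, whence $\Gamma^{\beta}_{\alpha;\gamma}=\Gamma^{\beta}_{\gamma;\alpha}$, which is the K\"ahler--Finsler condition \eqref{KFD}. For the Berwald property, Proposition \ref{VT} shows that $\Gamma^{\beta}_{;\gamma}=\mathcal{N}^{\beta}_{;\gamma}$ is holomorphic in $v$; being also positively homogeneous of degree one in $v$ (hence, by Hartogs extension and the identity theorem, $\mathbb{C}$-linear in $v$), we may write $\Gamma^{\beta}_{;\gamma}=B^{\beta}_{\tau\gamma}(z)v^{\tau}$ with $B^{\beta}_{\tau\gamma}$ independent of the fibre coordinates. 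Since the identity $\Gamma^{\alpha}_{\beta;\mu}=\partial\Gamma^{\alpha}_{;\mu}/\partial v^{\beta}$ holds for every strongly pseudoconvex complex Finsler metric (a direct consequence of \eqref{HVB}, \eqref{HVC} and the commutation of partials; cf. \cite{AP}), differentiation yields $\Gamma^{\beta}_{\alpha;\gamma}=B^{\beta}_{\alpha\gamma}(z)$, which depends only on $z$, so $F$ is a complex Berwald metric. Being simultaneously K\"ahler--Finsler and complex Berwald, $F$ is a strongly convex K\"ahler--Berwald metric.

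I expect the main obstacle to be the contraction step producing $\mathcal{N}^{\beta}_{;\gamma}=\Gamma^{\beta}_{;\gamma}$: it is precisely what identifies $\widetilde{\mathcal{H}}^{1,0}$ with $\mathcal{H}^{1,0}$, and it requires careful bookkeeping of the homogeneity identities for $F^{2}$, as well as the observation that only the horizontal half of the hypothesis is used throughout, so that Corollary \ref{cor-a}, Proposition \ref{prop3.3} and Proposition \ref{VT} (all stated under $\nabla_X\pmb{J}=0$ on $\mathcal{H}_{\mathbb{R}}$) genuinely apply.
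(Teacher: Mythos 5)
Your proposal is correct, and up to the identification $\mathcal{N}^{\beta}_{;\gamma}=\Gamma^{\beta}_{;\gamma}$, $\mathcal{N}^{\beta}_{\alpha;\gamma}=\Gamma^{\beta}_{\alpha;\gamma}$ it follows the paper's argument exactly: Corollary \ref{cor-a} and Proposition \ref{pro-b} to set up $\frac{\delta}{\delta z^{\gamma}}$, the horizontal formulas from the proof of Theorem \ref{Th-b}, metric compatibility combined with Lemma \ref{L-a} to get $\frac{\delta}{\delta z^{\gamma}}(G_{\alpha\bar\delta})=\mathcal{N}^{\beta}_{\alpha;\gamma}G_{\beta\bar\delta}$, contraction with $v^{\alpha}$, and the symmetry of Remark \ref{R-a} to obtain the K\"ahler--Finsler condition. (Your explicit remark that only the horizontal half of the hypothesis is needed in Theorem \ref{Th-b}'s computation is a point the paper glosses over, since it cites Theorem \ref{Th-b} as stated under the stronger hypothesis $\nabla\pmb{J}=0$; your reading is the right one.) Where you genuinely diverge is the complex Berwald step. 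The paper shows that the real Berwald coefficients $\hat{\mathbb{G}}^{k}_{jl}$ coincide with the horizontal Cartan coefficients, deduces $\dot{A}^{k}_{\;jl}=0$ so that $F$ is a real Landsberg metric, and then invokes the external equivalence of \cite{he-zhong} (real Landsberg $\Leftrightarrow$ weakly complex Berwald for strongly convex weakly K\"ahler--Finsler metrics) before combining with the K\"ahler--Finsler property. You instead argue directly on the complex side: Proposition \ref{VT} makes $\Gamma^{\beta}_{;\gamma}=\mathcal{N}^{\beta}_{;\gamma}$ holomorphic in $v$, the homogeneity $\Gamma^{\beta}_{;\gamma}(z;\lambda v)=\lambda\Gamma^{\beta}_{;\gamma}(z;v)$ then forces $\mathbb{C}$-linearity in $v$ (Hartogs applies since $n\ge 2$), and the standard identity $\Gamma^{\beta}_{\alpha;\gamma}=\partial\Gamma^{\beta}_{;\gamma}/\partial v^{\alpha}$ for the Chern--Finsler connection gives $\Gamma^{\beta}_{\alpha;\gamma}=\Gamma^{\beta}_{\alpha;\gamma}(z)$, which is precisely the paper's definition of a complex Berwald metric. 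Your route is shorter, self-contained, and avoids the detour through real Landsberg metrics and the citation of \cite{he-zhong}; the paper's route has the side benefit of exhibiting the vanishing of $\dot{A}^{k}_{\;jl}$, i.e.\ the coincidence of the real Berwald and horizontal Cartan coefficients, which is reused elsewhere. Both are valid proofs.
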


\begin{proof}
Let $V=\frac{1}{2}(X-i \pmb{J}X),W=\frac{1}{2}(Y-i \pmb{J}Y)$ with $X,Y\in\mathcal{V}_{\mathbb{R}}$. It follows that $V,W\in\mathcal{V}^{1,0}$ and  by Lemma \ref{L-a}, we have
\begin{equation}
\langle V,W\rangle=2\langle V\vert \overline{W}\rangle.\label{rc}
\end{equation}

By Theorem \ref{cartan}, the metric compatibility of the Cartan connection $\nabla$ implies that its natural extension  satisfies
\begin{equation}
Z\langle X\vert Y\rangle=\langle  \nabla_ZX\vert Y\rangle +\langle X\vert  \nabla_ZY\rangle\label{ETC-b}
\end{equation}
for any $Z=A+iB$ with $A, B\in T_{\mathbb{R}}\tilde{M}$ and $X,Y\in \mathcal{X}(\mathcal{V}_{\mathbb{C}})$.

By assumption, $\nabla_X\pmb{J}=0$ for all $X\in\mathcal{H}_{\mathbb{R}}$. Thus by Corollary \ref{cor-a}, $\mathcal{H}_{\mathbb{R}}$ is $\pmb{J}$-invariant. So that by Proposition \ref{pro-b}, $\frac{\delta}{\delta z^\alpha}=\frac{1}{2}(\frac{\delta}{\delta x^\alpha}-i\frac{\delta}{\delta x^{\alpha^\ast}})$.
Substituting $V=\frac{\partial}{\partial v^\beta}, W=\frac{\partial}{\partial v^\gamma}\in\mathcal{V}^{1,0}$ into \eqref{rc} and then using \eqref{ETC-b} with $Z=\frac{\delta}{\delta z^\alpha}=\frac{1}{2}(\frac{\delta}{\delta x^\alpha}-i\frac{\delta}{\delta x^{\alpha^\ast}})\in\widetilde{\mathcal{H}}^{1,0}$,  we obtain
\begin{eqnarray*}
\frac{\delta}{\delta z^\alpha}\left\langle \frac{\partial}{\partial v^\beta},\frac{\partial}{\partial v^\gamma}\right\rangle
&=&2\frac{\delta}{\delta z^\alpha}\left\langle \frac{\partial}{\partial v^\beta}\Big\vert \frac{\partial}{\partial \bar{v}^\gamma}\right\rangle\\
&=&2\left\langle \nabla_{\frac{\delta}{\delta z^\alpha}} \frac{\partial}{\partial v^\beta}\Big\vert \frac{\partial}{\partial\bar{v}^\gamma}\right\rangle+2\left\langle \frac{\partial}{\partial v^\beta}\Big\vert  \nabla_{\frac{\delta}{\delta z^\alpha}}\frac{\partial}{\partial\bar{v}^\gamma}\right\rangle\\
&=&\left\langle  \nabla_{\frac{\delta}{\delta z^\alpha}}\frac{\partial}{\partial v^\beta},\frac{\partial}{\partial v^\gamma}\right\rangle+\left\langle \frac{\partial}{\partial v^\alpha}, \nabla_{\frac{\delta}{\delta \bar{z}^\alpha}}\frac{\partial}{\partial v^\gamma}\right\rangle.
\end{eqnarray*}
This together with Theorem \ref{Th-b} imply
$$
\frac{\delta}{\delta z^\alpha} (G_{\beta\bar{\gamma}})=\left\langle\mathcal{N}_{\beta;\alpha}^\mu\frac{\partial}{\partial v^\mu},\frac{\partial}{\partial v^\gamma}\right\rangle=\mathcal{N}_{\beta;\alpha}^\mu G_{\mu\bar{\gamma}},
$$
from which it follows that
\begin{equation}
\mathcal{N}_{\beta;\alpha}^\mu=G^{\bar{\gamma}\mu}\frac{\delta}{\delta z^\alpha} (G_{\beta\bar{\gamma}})
=G^{\bar{\gamma}\mu}\left(\frac{\partial}{\partial z^\alpha}-\mathcal{N}_{;\alpha}^\nu\frac{\partial}{\partial v^\nu}\right)(G_{\beta\bar{\gamma}}).\label{ccc}
\end{equation}

Contracting both sides of \eqref{ccc} with respect to $v^{\beta}$, we obtain
$$
\mathcal{N}_{;\alpha}^\mu =G^{\bar{\gamma}\mu}\frac{\partial}{\partial z^\alpha}(G_{\beta\bar{\gamma}})v^\beta=G^{\bar{\gamma}\mu}\frac{\partial^2F^2}{\partial z^\alpha\partial\bar{v}^\gamma}=\Gamma_{;\alpha}^\mu,
$$
where we use the equality $\frac{\partial}{\partial v ^{\nu}}(G_{\beta\bar{\gamma}})v^\beta=0$ and $\Gamma_{;\alpha}^\mu$ are  the complex non-linear connection coefficients of $F$ defined by \eqref{HVB}.
Substituting $\Gamma_{;\alpha}^\mu=\mathcal{N}_{;\alpha}^\mu$ into \eqref{ccc}, we obtain
\begin{equation}
\mathcal{N}_{\beta;\alpha}^\mu=G^{\bar{\gamma}\mu}\left(\frac{\partial}{\partial z^\alpha}-\Gamma_{;\alpha}^\nu\frac{\partial}{\partial v ^{\nu}}\right)\left(G_{\beta\bar{\gamma}}\right)=\Gamma_{\beta;\alpha}^\mu,
\end{equation}
where $\Gamma_{\beta;\alpha}^\mu$ are exactly the horizontal connection coefficients of the Chern-Finsler connection given by \eqref{HVC}. By Remark \ref{R-a}, we alread have $\mathcal{N}_{\beta;\alpha}^\mu=\mathcal{N}_{\alpha;\beta}^\mu$, hence $\Gamma_{\beta;\alpha}^\mu =\Gamma_{\alpha;\beta}^\mu$. That is,  $F$ must be a K\"ahler-Finsler metric.

Next we show that $F$ actually must be a K\"ahler-Berwald metric.  By Proposition \ref{VT} and the quality $\Gamma_{;\alpha}^\mu=\mathcal{N}_{;\alpha}^\mu$,  we have
\begin{eqnarray*}
	0=	\frac{\partial}{\partial \bar{v}^{\beta}}\Gamma_{;\alpha}^{\mu}&=&\frac{1}{2}\left(\frac{\partial}{\partial y^{\beta}}+i\frac{\partial}{\partial y^{\beta^\ast  }}\right) \Gamma_{;\alpha}^{\mu} \\
	&=&\frac{1}{2}\left(\frac{\partial}{\partial y^{\beta}}+i\frac{\partial}{\partial y^{\beta^\ast  }}\right)\left(\hat{\Gamma}_{\alpha}^{\mu}+i\hat{\Gamma}_{\alpha}^{\mu^\ast  }\right)\\
	&=&\frac{1}{2}\left[(\hat{\mathbb{G}}_{\beta\alpha}^{\mu}-\hat{\mathbb{G}}_{\beta^\ast  \,\alpha}^{\mu^\ast  })+i(\hat{\mathbb{G}}_{\beta\alpha}^{\mu^\ast  }+\hat{\mathbb{G}}_{\beta^\ast  \,\alpha}^{\mu}) \right],
\end{eqnarray*}
where $\hat{\mathbb{G}}^{k}_{jl}=\frac{\partial ^2 \hat{\mathbb{G}}^k}{\partial y^j \partial y^l}$ are the real Berwald connection coefficients associated to $F$.
This implies that \begin{equation}\label{B-a}
	\hat{\mathbb{G}}_{\beta\alpha}^{\mu}=\hat{\mathbb{G}}_{\beta^\ast  \,\alpha}^{\mu^\ast  },\quad \hat{\mathbb{G}}_{\beta\alpha}^{\mu^\ast  }=-\hat{\mathbb{G}}_{\beta^\ast  \,\alpha}^{\mu}.
\end{equation}
Thus we have
\begin{eqnarray*}
	\frac{\partial }{\partial \bar{v}^\gamma}\Gamma^{\mu}_{\beta;\alpha}=\frac{\partial^2}{\partial \bar{v}^\gamma\partial v^{\beta}}\Gamma^{\mu}_{;\alpha}=\frac{\partial^2}{\partial v^{\beta}\partial \bar{v}^\gamma}\Gamma^{\mu}_{;\alpha}=0.
	\end{eqnarray*}
A direct computation shows
\begin{eqnarray*}
\Gamma_{\beta;\alpha}^{\mu}=\frac{\partial}{\partial v^{\beta}}\Gamma_{;\alpha}^{\mu}&=&\frac{\partial}{\partial v^{\beta}}\mathcal{N}_{;\alpha}^{\mu}\\
&=&\frac{1}{2}\left(\frac{\partial}{\partial y^{\beta}}-i\frac{\partial}{\partial y^{\beta^\ast  }}\right) \mathcal{N}_{;\alpha}^{\mu} \\
&=&\frac{1}{2}\left(\frac{\partial}{\partial y^{\beta}}-i\frac{\partial}{\partial y^{\beta^\ast  }}\right)\left(\hat{\Gamma}_{\alpha}^{\mu}+i\hat{\Gamma}_{\alpha}^{\mu^\ast  }\right)\\
&=&\frac{1}{2}\left[\hat{\mathbb{G}}_{\beta\alpha}^{\mu}+i\hat{\mathbb{G}}_{\beta\alpha}^{\mu^\ast  }-i\hat{\mathbb{G}}_{\beta^\ast  \,\alpha}^{\mu}+\hat{\mathbb{G}}_{\beta^\ast  \,\alpha}^{\mu^\ast  } \right]\\
&=&\hat{\mathbb{G}}_{\beta\alpha}^{\mu}+i\hat{\mathbb{G}}_{\beta\alpha}^{\mu^\ast  },
\end{eqnarray*}
where we used \eqref{B-a} in the last equality.
On the other hand, we have
$$\Gamma_{\beta;\alpha}^\mu=\mathcal{N}_{\beta;\alpha}^\mu=\hat{\Gamma}^{\mu}_{\beta;\alpha}+i\hat{\Gamma}^{\mu^\ast  }_{\beta;\alpha}.$$
Thus we must have
$$\hat{\Gamma}^{\mu}_{\beta;\alpha}=\hat{\mathbb{G}}^{\mu}_{\beta\alpha},\quad \text{and}\quad
\hat{\Gamma}^{\mu^\ast  }_{\beta;\alpha}=\hat{\mathbb{G}}^{\mu^\ast  }_{\beta\alpha}.$$
This implies that real Berwald connection coefficients coincide with the horizontal connection coefficients of the Cartan connection associated to $F$. Since the horizontal Cartan connection coefficients (the same as the Chern connection coefficients) and the real Berwald connection coefficients satisfy $\hat{\Gamma}^{k}_{j;l}=\hat{\mathbb{G}}^{k}_{jl}-\dot{A}^k_{\;jl}$ (see p. 39 in \cite{BCS}), it follows that $\dot{A}_{\;jl}^k=0$, hence $F$ must be a real Landsberg metric. Furthermore, a strongly convex weakly K\"ahler-Finsler metric is a real Landsberg metric iff it is a weakly complex Berwald metric \cite{he-zhong}, and we have already showed that $F$ is a K\"ahler-Finsler metric, it follows that $F$ must be  a K\"ahler-Berwald metric.
\end{proof}

\begin{theorem}\label{Th-cc}
Let $F: T^{1,0}M\rightarrow [0,+\infty)$ be a strongly convex complex Finsler metric on a complex manifold $M$.  Then $\nabla \pmb{J}=0$ iff the Cartan connection $\nabla$ associated to $F$ coincides with the Chern-Finsler connection $D$ associated to $F$.
\end{theorem}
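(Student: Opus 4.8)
We prove the two implications separately, using the structural results already established.

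\emph{Proof that $\nabla\pmb{J}=0$ implies $\nabla=D$.} Assume $\nabla\pmb{J}=0$. By Theorem \ref{Th-b}, the $\mathbb{C}$-linear extension \eqref{ECC} of $\nabla$ preserves the splitting $\mathcal{V}_{\mathbb{C}}=\mathcal{V}^{1,0}\oplus\mathcal{V}^{0,1}$ and restricts to a good complex vertical connection $\nabla:\mathcal{X}(\mathcal{V}^{1,0})\to\mathcal{X}(T^\ast_{\mathbb{C}}\tilde{M}\otimes\mathcal{V}^{1,0})$ of type $(1,0)$, with $\nabla_{\delta/\delta z^\gamma}\frac{\partial}{\partial v^\alpha}=\mathcal{N}^\beta_{\alpha;\gamma}\frac{\partial}{\partial v^\beta}$, $\nabla_{\partial/\partial v^\gamma}\frac{\partial}{\partial v^\alpha}=\mathcal{N}^\beta_{\alpha\gamma}\frac{\partial}{\partial v^\beta}$ and vanishing $(0,1)$-derivatives. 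By the uniqueness part of Theorem \ref{CFC}, it then suffices to verify that this connection is metric compatible in the sense of \eqref{MC}. Here Corollary \ref{cor-a} (namely $\mathcal{H}_{\mathbb{R}}$ is $\pmb{J}$-invariant) together with Proposition \ref{pro-b} allows one to replace the complex frame $\{\frac{\delta}{\delta z^\alpha},\frac{\partial}{\partial v^\alpha}\}$ of $T^{1,0}\tilde{M}$ by the real combinations $\frac12\big(\frac{\delta}{\delta x^\alpha}-i\frac{\delta}{\delta x^{\alpha^\ast}}\big)$ and $\frac12\big(\frac{\partial}{\partial y^\alpha}-i\frac{\partial}{\partial y^{\alpha^\ast}}\big)$; using Lemma \ref{L-a} to convert $\langle\cdot,\cdot\rangle$ into the real inner product $\langle\cdot\vert\cdot\rangle$ and invoking the metric compatibility of the real Cartan connection (Theorem \ref{cartan}(ii)) extended $\mathbb{C}$-linearly, the horizontal directions $X=\frac{\delta}{\delta z^\alpha}$ reproduce exactly the computation in the proof of Theorem \ref{Th-c} (which yields $\mathcal{N}^\mu_{;\alpha}=\Gamma^\mu_{;\alpha}$ and $\mathcal{N}^\mu_{\beta;\alpha}=\Gamma^\mu_{\beta;\alpha}$), while the vertical directions $X=\frac{\partial}{\partial v^\alpha}$, using $\nabla_{\partial/\partial v^\alpha}\frac{\partial}{\partial\bar{v}^\gamma}=0$ from Theorem \ref{Th-b}, give $\frac{\partial}{\partial v^\alpha}(G_{\beta\bar{\gamma}})=\mathcal{N}^\mu_{\beta\alpha}G_{\mu\bar{\gamma}}$, whence $\mathcal{N}^\mu_{\beta\alpha}=\Gamma^\mu_{\beta\alpha}$ with $\Gamma^\mu_{\beta\alpha}$ as in \eqref{HVC}. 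Therefore \eqref{MC} holds, and Theorem \ref{CFC} gives $\nabla=D$; equivalently, with $\delta v^\gamma=dv^\gamma+\mathcal{N}^\gamma_{;\alpha}dz^\alpha=\psi^\gamma$, the coefficient identities above show that the connection $1$-forms of $\nabla$ on $\mathcal{V}^{1,0}$ coincide with the $\omega^\mu_\beta$ of $D$.

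\emph{Proof that $\nabla=D$ implies $\nabla\pmb{J}=0$.} Assume $\nabla=D$. Since $D$ is a complex vertical connection, $D_XV\in\mathcal{X}(\mathcal{V}^{1,0})$ whenever $V\in\mathcal{X}(\mathcal{V}^{1,0})$; hence the extended $\nabla$ preserves $\mathcal{V}^{1,0}$, and by conjugation it preserves $\mathcal{V}^{0,1}$. From \eqref{CS-T} and the definition of $\frac{\partial}{\partial v^\alpha},\frac{\partial}{\partial\bar{v}^\alpha}$ one computes $\pmb{J}\frac{\partial}{\partial v^\alpha}=i\frac{\partial}{\partial v^\alpha}$ and $\pmb{J}\frac{\partial}{\partial\bar{v}^\alpha}=-i\frac{\partial}{\partial\bar{v}^\alpha}$, that is, $\pmb{J}$ acts as multiplication by $i$ on $\mathcal{V}^{1,0}$ and by $-i$ on $\mathcal{V}^{0,1}$. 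Thus for any $X$ and $V\in\mathcal{V}^{1,0}$ one has $(\nabla_X\pmb{J})V=\nabla_X(iV)-\pmb{J}(\nabla_XV)=i\nabla_XV-\pmb{J}(\nabla_XV)=0$ because $\nabla_XV\in\mathcal{V}^{1,0}$; the analogous identity holds on $\mathcal{V}^{0,1}$, and since $\mathcal{V}_{\mathbb{C}}=\mathcal{V}^{1,0}\oplus\mathcal{V}^{0,1}$ with $\pmb{J}$ and $\nabla_X$ both $\mathbb{C}$-linear, we conclude $\nabla_X\pmb{J}=0$ for every $X\in T_{\mathbb{R}}\tilde{M}$, i.e. $\nabla\pmb{J}=0$. (Alternatively, $\nabla=D$ forces the Cartan coefficients to obey \eqref{eq-ab} and \eqref{eq-abc}, which by Theorem \ref{Th-ab} is exactly $\nabla\pmb{J}=0$.)

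\emph{Expected main difficulty.} The only genuinely non-routine step is verifying the metric compatibility \eqref{MC} for the extended Cartan connection on $\mathcal{V}^{1,0}$ in the forward direction: this requires coordinating the several real/complex identifications ($^o$, $_o$, the horizontal maps, and the passage $\frac{\delta}{\delta x^\alpha}\leftrightarrow\frac{\delta}{\delta z^\alpha}$), all of which become available only after $\mathcal{H}_{\mathbb{R}}$ is known to be $\pmb{J}$-invariant. The converse is a formal consequence of type preservation.
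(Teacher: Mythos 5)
Your proposal is correct, and the forward implication is essentially the paper's argument: you invoke Theorem \ref{Th-b} for the type-$(1,0)$ extension, reuse the horizontal computation of Theorem \ref{Th-c} to get $\mathcal{N}^\mu_{;\alpha}=\Gamma^\mu_{;\alpha}$ and $\mathcal{N}^\mu_{\beta;\alpha}=\Gamma^\mu_{\beta;\alpha}$, and then run the same Lemma \ref{L-a} computation in the vertical directions to get $\mathcal{N}^\mu_{\beta\alpha}=\Gamma^\mu_{\beta\alpha}$; whether one then concludes by matching coefficients (as the paper does) or by appealing to the uniqueness clause of Theorem \ref{CFC} after checking \eqref{MC} and goodness is a matter of packaging, and both are legitimate since Theorem \ref{Th-b} supplies goodness. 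Where you genuinely diverge is the converse: the paper expands $0=D_{\delta_{\bar\gamma}}\partial/\partial v^\alpha=\nabla_{\delta/\delta\bar z^\gamma}\partial/\partial v^\alpha$ and $\Gamma^\beta_{\alpha;\gamma}\partial/\partial v^\beta=\nabla_{\delta/\delta z^\gamma}\partial/\partial v^\alpha$ (and their vertical analogues) in the real frame to extract the coefficient identities \eqref{eq-ab}--\eqref{eq-abc} and then cites Theorem \ref{Th-ab}, whereas you observe directly that $\nabla=D$ forces the complexified Cartan connection to preserve the splitting $\mathcal{V}_{\mathbb{C}}=\mathcal{V}^{1,0}\oplus\mathcal{V}^{0,1}$ on which $\pmb{J}$ acts by $\pm i$, so $\nabla\pmb{J}=0$ follows formally. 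Your version is cleaner and makes transparent that $\nabla\pmb{J}=0$ is exactly type preservation of the extended covariant derivative (indeed $(\nabla_X\pmb{J})V=2i(\nabla_XV)^{0,1}$ for $V\in\mathcal{V}^{1,0}$), at the cost of leaning on the interpretation of ``$\nabla$ coincides with $D$'' as equality of the $\mathbb{C}$-bilinearly extended Cartan connection with $D$ on $\mathcal{X}(\mathcal{V}^{1,0})$ in all complex directions --- but that is precisely the reading the paper itself uses, so there is no gap.
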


\begin{proof} First we show the necessity.  Suppose $\nabla \pmb{J}=0$. Extending $\nabla$ to $\mathcal{V}_{\mathbb{C}}$, then by Theorem \ref{Th-b}, we obtain a good complex vertical connection $\nabla:\mathcal{X}(\mathcal{V}^{1,0})\rightarrow\mathcal{X}(T_{\mathbb{C}}^\ast\tilde{M}\otimes \mathcal{V}^{1,0})$, which is of type $(1,0)$  such that
$$
\nabla_{\frac{\delta}{\delta z^\gamma}}\frac{\partial}{\partial v^\alpha}=\mathcal{N}_{\alpha;\gamma}^\beta\frac{\partial}{\partial v^\beta},\quad \nabla_{\frac{\partial}{\partial v^\gamma}}\frac{\partial}{\partial v^\alpha}=\mathcal{N}_{\alpha\gamma}^\beta\frac{\partial}{\partial v^\beta}.
$$
Since $\nabla\pmb{J}=0$ implies $\nabla_X\pmb{J}=0$ for all $X\in\mathcal{H}_{\mathbb{R}}$, thus by Theorem \ref{Th-c}, we have $\mathcal{N}_{\alpha;\gamma}^\beta=\Gamma_{\alpha;\gamma}^\beta$. Next we show $\mathcal{N}_{\alpha\gamma}^\beta=\Gamma_{\alpha\gamma}^\beta$.
Using \eqref{rc} and \eqref{ETC-b}, we have
\begin{eqnarray*}
\frac{\partial}{\partial v^\alpha}\left\langle \frac{\partial}{\partial v^\beta},\frac{\partial}{\partial v^\gamma}\right\rangle
&=&2\frac{\partial}{\partial v^\alpha}\left\langle\frac{\partial}{\partial v^\beta}\Big\vert \frac{\partial}{\partial \bar{v}^\gamma}\right\rangle\\
&=&2\left\langle\nabla_{\frac{\partial}{\partial v^\alpha}}\frac{\partial}{\partial v^\beta}\Big\vert \frac{\partial}{\partial \bar{v}^\gamma}\right\rangle
+\left\langle\frac{\partial}{\partial v^\beta}\Big\vert \nabla_{\frac{\partial}{\partial v^\alpha}}\frac{\partial}{\partial \bar{v}^\gamma}\right\rangle\\
&=&\left\langle \nabla_{\frac{\partial}{\partial v^\alpha}}\frac{\partial}{\partial v^\beta},\frac{\partial}{\partial v^\gamma}\right\rangle+\left\langle \frac{\partial}{\partial v^\alpha}, \nabla_{\frac{\partial}{\partial \bar{v}^\alpha}}\frac{\partial}{\partial v^\gamma}\right\rangle.
\end{eqnarray*}
Again using Theorem \ref{Th-b}, the above equality reduces to
$$
\frac{\partial}{\partial v^\alpha} (G_{\beta\bar{\gamma}})=\left\langle\mathcal{N}_{\beta\alpha}^\mu\frac{\partial}{\partial v^\mu},\frac{\partial}{\partial v^\gamma}\right\rangle=\mathcal{N}_{\beta\alpha}^\mu G_{\mu\bar{\gamma}}.
$$
Hence,
\begin{equation}
	\mathcal{N}_{\beta\alpha}^\mu=G^{\bar{\gamma}\mu}\frac{\partial}{\partial v^\alpha} (G_{\beta\bar{\gamma}})
	=\Gamma_{\beta\alpha}^\mu,
\end{equation}
which are exactly the vertical connection coefficients of the Chern-Finsler connection associated to $F$. Thus $\nabla:\mathcal{X}(\mathcal{V}^{1,0})\rightarrow \mathcal{X}(T_{\mathbb{C}}^\ast \tilde{M}\otimes \mathcal{V}^{1,0})$ coincides with the Chern-Finsler connection $D$ associated to $F$.

Next we show the sufficiency. By Theorem \ref{Th-ab}, it suffices to show that \eqref{eq-ab} and \eqref{eq-abc} hold.
By assumption, the Cartan connection $\nabla$ and the Chern-Finsler connection $D$ coincide, thus the real non-linear connection $\tilde{\nabla}$ associated to $\nabla$ and the complex non-linear connection $\tilde{D}$ associated to $D$ coincide. In other words, let $\xi\in\mathcal{X}(T^{1,0}M)$ be a holomorphic vector field and $w\in T_p^{1,0}M$ such that $\eta_o=\xi$ and $u_o=w$, then at the point $p$, we have
\begin{equation}
(\tilde{D}_w\xi)(p)=((\tilde{\nabla}_u\eta)_o)(p),\label{RAC}
\end{equation}
which implies
\begin{equation}
\hat{\Gamma}_\alpha^\beta=\hat{\Gamma}_{\alpha^\ast}^{\beta^\ast},\quad \hat{\Gamma}_\alpha^{\beta^\ast}=-\hat{\Gamma}_{\alpha^\ast}^\beta\label{A-a}
\end{equation}
and
\begin{equation}
\mathcal{N}_{;\alpha}^\beta=\Gamma_{;\alpha}^\beta=\hat{\Gamma}_\alpha^\beta+i\hat{\Gamma}_\alpha^{\beta^\ast}.
\end{equation}
Note that \eqref{A-a} implies that $\mathcal{H}_{\mathbb{R}}$ is $\pmb{J}$-invariant, so that the complex horizontal bundle $\widetilde{\mathcal{H}}^{1,0}$ spanned by $\{\frac{\delta}{\delta z^1},\cdots,\frac{\delta}{\delta z^n}\}$ actually coincides with $\mathcal{H}^{1,0}$, namely $\frac{\delta}{\delta z^\alpha}=\delta_\alpha$ for $\alpha=1,\cdots,n$. Thus by Proposition \ref{pro-b}, we have
\begin{equation}
\left(\frac{\delta}{\delta x^\alpha}\right)_o=\frac{\delta}{\delta z^\alpha}=\delta_\alpha\quad\text{and}\quad \left(\frac{\delta}{\delta x^{\alpha^\ast   }}\right)_o=i\frac{\delta}{\delta z^\alpha}=i\delta_\alpha.
\end{equation}
Therefore
\begin{eqnarray*}
0&=&D_{\delta_{\bar{\gamma}}}\frac{\partial}{\partial v^\alpha}=	\nabla_{\frac{\delta}{\delta \bar{z}^\gamma}}\frac{\partial}{\partial v^\alpha}\\
	&=&\frac{1}{2}\left(\nabla_{\frac{\delta}{\delta x^\gamma}}\frac{\partial}{\partial v^\alpha}+i\nabla_{\frac{\delta}{\delta x^{\gamma^\ast }}}\frac{\partial}{\partial v^\alpha}\right)\\
	&=&\frac{1}{4}\left(\nabla_{\frac{\delta}{\delta x^\gamma}}\frac{\partial}{\partial y^\alpha}-i\nabla_{\frac{\delta}{\delta x^\gamma}}\frac{\partial}{\partial y^{\alpha^\ast   }}
	+i\left(\nabla_{\frac{\delta}{\delta x^{\gamma^\ast }}}\frac{\partial}{\partial y^\alpha}-i\nabla_{\frac{\delta}{\delta x^{\gamma^\ast }}}\frac{\partial}{\partial y^{\alpha^\ast   }}\right)\right)\\
	&=&\frac{1}{4}\left(\hat{\Gamma}_{\alpha;\gamma}^\beta-i\hat{\Gamma}_{\alpha^\ast   ;\gamma}^\beta+i\hat{\Gamma}_{\alpha;\gamma^\ast }^\beta+\hat{\Gamma}_{\alpha^\ast   ;\gamma^\ast }^\beta\right)\frac{\partial}{\partial y^\beta}\\
	&&+\frac{1}{4}\left(\hat{\Gamma}_{\alpha;\gamma}^{\beta^\ast  }-i\hat{\Gamma}_{\alpha^\ast   ;\gamma}^{\beta^\ast  }+i\hat{\Gamma}_{\alpha;\gamma^\ast }^{\beta^\ast  }+\hat{\Gamma}_{\alpha^\ast   ;\gamma^\ast }^{\beta^\ast  }\right)\frac{\partial}{\partial y^{\beta^\ast  }},
\end{eqnarray*}
which means that
 \begin{eqnarray}
	\hat{\Gamma}_{\alpha;\gamma}^\beta=-\hat{\Gamma}_{\alpha^\ast   ;\gamma^\ast }^\beta,\quad \hat{\Gamma}_{\alpha^\ast   ;\gamma}^\beta=\hat{\Gamma}_{\alpha;\gamma^\ast }^\beta,\label{eq-CC1}\\
	\hat{\Gamma}_{\alpha;\gamma}^{\beta^\ast  }=-\hat{\Gamma}_{\alpha^\ast   ;\gamma^\ast }^{\beta^\ast  },\quad \hat{\Gamma}_{\alpha^\ast   ;\gamma}^{\beta^\ast  }=\hat{\Gamma}_{\alpha;\gamma^\ast }^{\beta^\ast  }.\label{eq-CC2}
\end{eqnarray}
Using \eqref{eq-CC1} and \eqref{eq-CC2}, we have
 \begin{eqnarray*}
\Gamma^\beta_{\alpha;\gamma}\frac{\partial}{\partial v^\beta}&=&D_{\delta_\gamma}\frac{\partial}{\partial v^\alpha}=\nabla_{\frac{\delta}{\delta z^\gamma}}\frac{\partial}{\partial v^\alpha}\\
	&=&\frac{1}{4}\left(\nabla_{\frac{\delta}{\delta x^\gamma}}\frac{\partial}{\partial y^\alpha}-i\nabla_{\frac{\delta}{\delta x^\gamma}}\frac{\partial}{\partial y^{\alpha^\ast   }}
	-i\nabla_{\frac{\delta}{\delta x^{\gamma^\ast }}}\frac{\partial}{\partial y^\alpha}-\nabla_{\frac{\delta}{\delta x^{\gamma^\ast }}}\frac{\partial}{\partial y^{\alpha^\ast   }}\right)\\
	&=&\frac{1}{4}\left(\hat{\Gamma}_{\alpha;\gamma}^\beta-i\hat{\Gamma}_{\alpha^\ast   ;\gamma}^\beta-i\hat{\Gamma}_{\alpha;\gamma^\ast }^\beta-\hat{\Gamma}_{\alpha^\ast   ;\gamma^\ast }^\beta\right)\frac{\partial}{\partial y^\beta}\\
	&&+\frac{1}{4}\left(\hat{\Gamma}_{\alpha;\gamma}^{\beta^\ast  }-i\hat{\Gamma}_{\alpha^\ast   ;\gamma}^{\beta^\ast  }-i\hat{\Gamma}_{\alpha;\gamma^\ast }^{\beta^\ast  }-\hat{\Gamma}_{\alpha^\ast   ;\gamma^\ast }^{\beta^\ast  }\right)\frac{\partial}{\partial y^{\beta^\ast  }}\\
	&=&\frac{1}{2}\left(\hat{\Gamma}_{\alpha;\gamma}^\beta-i\hat{\Gamma}_{\alpha^\ast   ;\gamma}^\beta\right)\frac{\partial}{\partial y^\beta}+\frac{1}{2}\left(\hat{\Gamma}_{\alpha;\gamma}^{\beta^\ast  }-i\hat{\Gamma}_{\alpha^\ast   ;\gamma}^{\beta^\ast  }\right)\frac{\partial}{\partial y^{\beta^\ast  }},
\end{eqnarray*}
which implies that
$$\text{Re}\Gamma_{\alpha;\gamma}^\beta=\hat{\Gamma}_{\alpha;\gamma}^\beta= \hat{\Gamma}_{\alpha^\ast   ;\gamma}^{\beta^\ast  },\quad \text{Im}\Gamma_{\alpha;\gamma}^\beta=-\hat{\Gamma}_{\alpha^\ast   ;\gamma}^\mu=\hat{\Gamma}_{\alpha;\gamma}^{\beta^\ast  }.
$$
Thus we obtain \eqref{eq-ab}.

Similarly, we have
\begin{eqnarray*}
0&=&D_{\frac{\partial}{\partial \bar{v}^\gamma}}\frac{\partial}{\partial v^\alpha}=\nabla_{\frac{\partial}{\partial \bar{v}^\gamma}}\frac{\partial}{\partial v^\alpha}\\
&=&\frac{1}{4}(\hat{\Gamma}_{\alpha\gamma}^\beta-i\hat{\Gamma}_{\alpha^\ast   \gamma}^\beta+i\hat{\Gamma}_{\alpha\gamma^\ast }^\beta+\hat{\Gamma}_{\alpha^\ast   \gamma^\ast }^\beta)\frac{\partial}{\partial y^\beta}\\
&&	+\frac{1}{4}(\hat{\Gamma}_{\alpha\gamma}^{\beta^\ast  }-i\hat{\Gamma}_{\alpha^\ast   \gamma}^{\beta^\ast  }+i\hat{\Gamma}_{\alpha\gamma^\ast }^{\beta^\ast  }+\hat{\Gamma}_{\alpha^\ast   \gamma^\ast }^{\beta^\ast  })\frac{\partial}{\partial y^{\beta^\ast  }}.
\end{eqnarray*}
It follows that
\begin{eqnarray}
		\hat{\Gamma}_{\alpha\gamma}^\beta=-\hat{\Gamma}_{\alpha^\ast   \gamma^\ast }^\beta,\quad \hat{\Gamma}_{\alpha^\ast   \gamma}^\beta=\hat{\Gamma}_{\alpha\gamma^\ast }^\beta,\quad
		\hat{\Gamma}_{\alpha\gamma}^{\beta^\ast  }=-\hat{\Gamma}_{\alpha^\ast \gamma^\ast }^{\beta^\ast  },\quad \hat{\Gamma}_{\alpha^\ast \gamma}^{\beta^\ast}
		=\hat{\Gamma}_{\alpha\gamma^\ast }^{\beta^\ast  }.\label{B-a}
\end{eqnarray}
On the other hand,
\begin{eqnarray*}
\Gamma_{\alpha\gamma}^\beta\frac{\partial}{\partial v^\beta}&=&D_{\frac{\partial}{\partial v^\gamma}}\frac{\partial}{\partial v^\alpha}=\nabla_{\frac{\partial}{\partial v^\gamma}}\frac{\partial}{\partial v^\alpha}\\
&=&\frac{1}{4}(\hat{\Gamma}_{\alpha\gamma}^\beta-i\hat{\Gamma}_{\alpha^\ast   \gamma}^\beta-i\hat{\Gamma}_{\;\alpha\gamma^\ast }^\beta-\hat{\Gamma}_{\alpha^\ast   \gamma^\ast }^\beta)\frac{\partial}{\partial y^\beta}\\
&&	+\frac{1}{4}(\hat{\Gamma}_{\alpha\gamma}^{\beta^\ast  }-i\hat{\Gamma}_{\alpha^\ast   \gamma}^{\beta^\ast  }-i\hat{\Gamma}_{\alpha\gamma^\ast }^{\beta^\ast  }-\hat{\Gamma}_{\alpha^\ast   \gamma^\ast }^{\beta^\ast  })\frac{\partial}{\partial y^{\beta^\ast  }},
\end{eqnarray*}
which together with \eqref{B-a} implies that
\begin{equation}
	\text{Re}\Gamma_{\alpha\gamma}^\beta=\hat{\Gamma}_{\alpha\gamma}^\beta= \hat{\Gamma}_{\alpha^\ast   \gamma}^{\beta^\ast  },\quad \text{Im}\Gamma_{\alpha\gamma}^\beta=-\hat{\Gamma}_{\alpha^\ast   \gamma}^\beta=\hat{\Gamma}_{\alpha\gamma}^{\beta^\ast  }.
	\end{equation}
Thus we obtain \eqref{eq-abc}. This completes the proof.
\end{proof}

\begin{theorem}\label{Th-d}
Let $F: T^{1,0}M\rightarrow [0,+\infty)$ be a strongly convex K\"ahler-Berwald metric on a complex manifold $M$.
Then  $\nabla_X \pmb{J}=0$ for all $X\in\mathcal{H}_{\mathbb{R}}$.
\end{theorem}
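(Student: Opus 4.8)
The plan is to deduce, from the K\"ahler--Berwald hypothesis, the coefficient identities \eqref{eq-ab}; by the proof of Theorem~\ref{Th-ab} these are precisely equivalent to $\nabla_X\pmb{J}=0$ for all $X\in\mathcal{H}_{\mathbb{R}}$. I would proceed in two stages: first establish the contracted relations \eqref{eq-cc} (equivalently, that $\mathcal{H}_{\mathbb{R}}$ is $\pmb{J}$-invariant), and then differentiate them in the fibre variables to recover the full \eqref{eq-ab}.

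For the first stage, I would show that the real geodesic spray coefficients of $F$, combined as $\hat{\mathbb{G}}^{\beta}+i\hat{\mathbb{G}}^{\beta^\ast}$, are holomorphic in the fibre variable $v$. Since a K\"ahler--Berwald metric is in particular a K\"ahler--Finsler metric, its real geodesics are governed by the complex non-linear connection of $F$: comparing the real geodesic equation, which by Euler's theorem applied to \eqref{geodesic-2}--\eqref{geodesic-3} reads $\ddot\sigma^{k}+2\hat{\mathbb{G}}^{k}=0$, with the complex geodesic equation $\ddot z^{\alpha}+\Gamma^{\alpha}_{;\mu}\dot z^{\mu}=0$ (see the discussion of K\"ahler metrics and complex geodesics in \cite{AP}) gives $\hat{\mathbb{G}}^{\beta}+i\hat{\mathbb{G}}^{\beta^\ast}=\tfrac12\,\Gamma^{\beta}_{;\mu}v^{\mu}$. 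Because $F$ is also a complex Berwald metric, $\Gamma^{\beta}_{\nu;\mu}$ depends only on $z$, so $\Gamma^{\beta}_{;\mu}v^{\mu}=\Gamma^{\beta}_{\nu;\mu}(z)v^{\nu}v^{\mu}$ is a holomorphic polynomial in $v$; hence $\frac{\partial}{\partial\bar v^{\gamma}}\bigl(\hat{\mathbb{G}}^{\beta}+i\hat{\mathbb{G}}^{\beta^\ast}\bigr)=0$, which is exactly equation \eqref{pa} of Proposition~\ref{VT}, now obtained from the hypothesis on $F$ rather than from $\nabla\pmb{J}=0$. Expanding $\frac{\partial}{\partial\bar v^{\gamma}}=\tfrac12\bigl(\frac{\partial}{\partial y^{\gamma}}+i\frac{\partial}{\partial y^{\gamma^\ast}}\bigr)$, separating real and imaginary parts (all $\hat{\mathbb{G}}^{k}$ being real), and using $\hat\Gamma^{k}_{j}=\partial\hat{\mathbb{G}}^{k}/\partial y^{j}$, one reads off $\hat\Gamma^{\beta}_{\gamma}=\hat\Gamma^{\beta^\ast}_{\gamma^\ast}$ and $\hat\Gamma^{\beta^\ast}_{\gamma}=-\hat\Gamma^{\beta}_{\gamma^\ast}$, i.e.\ \eqref{eq-cc}.

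For the second stage, I would invoke that a strongly convex weakly K\"ahler--Finsler metric which is weakly complex Berwald is a real Landsberg metric \cite{he-zhong}; a K\"ahler--Berwald metric satisfies both, so $\dot{A}^{k}_{\;jl}=0$ and therefore $\hat\Gamma^{k}_{j;l}=\hat{\mathbb{G}}^{k}_{jl}-\dot{A}^{k}_{\;jl}=\partial^{2}\hat{\mathbb{G}}^{k}/\partial y^{j}\partial y^{l}=\partial\hat\Gamma^{k}_{l}/\partial y^{j}$ (cf.\ \cite{BCS}, p.~39). The two identities in \eqref{eq-cc} are equalities of functions on $\tilde M$; differentiating each with respect to $y^{j}$ and using $\hat\Gamma^{k}_{l;j}=\partial\hat\Gamma^{k}_{l}/\partial y^{j}$ yields $\hat\Gamma^{\beta}_{\gamma;j}=\hat\Gamma^{\beta^\ast}_{\gamma^\ast;j}$ and $\hat\Gamma^{\beta^\ast}_{\gamma;j}=-\hat\Gamma^{\beta}_{\gamma^\ast;j}$ for all $j=1,\dots,2n$, which is \eqref{eq-ab}. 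By the proof of Theorem~\ref{Th-ab} this is equivalent to $\nabla_X\pmb{J}=0$ for all $X\in\mathcal{H}_{\mathbb{R}}$, completing the argument.

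The main obstacle is the identity $\hat{\mathbb{G}}^{\beta}+i\hat{\mathbb{G}}^{\beta^\ast}=\tfrac12\,\Gamma^{\beta}_{;\mu}v^{\mu}$ underlying the first stage: this is the assertion that for a K\"ahler--Finsler metric the real geodesic spray of $F_{\mathbb{R}}$ is governed by the Chern--Finsler complex non-linear connection \eqref{HVB}, and it is the only genuinely complex-analytic input. Once it is in hand, the complex Berwald condition turns it into a holomorphy statement, and everything else is homogeneity bookkeeping together with the Landsberg identification of the horizontal Cartan and real Berwald coefficients.
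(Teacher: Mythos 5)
Your proposal is correct and follows essentially the same route as the paper: the paper obtains your key identity $\hat{\mathbb{G}}^{\beta}+i\hat{\mathbb{G}}^{\beta^\ast}=\mathbb{G}^{\beta}$ and its holomorphy in $v$ by citing Lemma 3.1 of \cite{XZ} rather than re-deriving it from the geodesic equations, reads off \eqref{eq-cc} exactly as you do, and then passes to \eqref{eq-ab} via the Landsberg property $\dot{A}^k_{\;jl}=0$ (quoting Theorem 1.2 of \cite{Zhong-a} for real Berwald, where you quote \cite{he-zhong}). The differences are only in which references carry the two external inputs, not in the structure of the argument.
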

\begin{proof}
Since $F$ is a K\"ahler-Berwald metric, by Lemma 3.1 in \cite{XZ}, $\mathbb{G}^\mu$ are holomorphic with respect to $v$ and $\mathbb{G}^\mu=\hat{\mathbb{G}}^\mu+i\hat{\mathbb{G}}^{\mu^\ast}$. Thus
\begin{eqnarray*}
0=\frac{\partial\mathbb{G}^\mu}{\partial \bar{v}^{\alpha}}
	&=&\frac{1}{2}\left(\frac{\partial}{\partial y^{\alpha}}+i\frac{\partial}{\partial y^{\alpha^\ast  }}\right)\left(\hat{\mathbb{G}}^\mu+i\hat{\mathbb{G}}^{\mu^\ast}\right)\\
	&=&\frac{1}{2}\left(\hat{\Gamma}_\alpha^\mu+i\hat{\Gamma}_\alpha^{\mu^\ast}+i\hat{\Gamma}_{\alpha^\ast}^\mu-\hat{\Gamma}_{\alpha^\ast}^{\mu^\ast}\right),
	\end{eqnarray*}
from which we get
\begin{equation}
\hat{\Gamma}_\alpha^\mu=\hat{\Gamma}_{\alpha^\ast}^{\mu^\ast},\quad\hat{\Gamma}_\alpha^{\mu^\ast}=-\hat{\Gamma}_{\alpha^\ast}^\mu.\label{337}
\end{equation}
Differentiating \eqref{337}, we obtain
$$
\hat{\mathbb{G}}_{j\alpha}^\mu=\hat{\mathbb{G}}_{j\alpha^\ast}^{\mu^\ast},\quad \hat{\mathbb{G}}_{j\alpha}^{\mu^\ast}=-\hat{\mathbb{G}}_{j\alpha^\ast}^\mu.
$$

By Theorem 1.2 in \cite{Zhong-a}, a strongly convex K\"ahler-Berwald metric is necessary a real Berwald metric, hence a real Landsberg metric \cite{Shen}. So   $\dot{A}^k_{\;jl}=0$,  which implies $\hat{\mathbb{G}}^k_{jl}=\hat{\Gamma}^k_{j;l}$. That is, the equalities in \eqref{eq-ab} hold. Thus $\nabla_X \pmb{J}=0$ for all $X\in\mathcal{H}_{\mathbb{R}}$.
\end{proof}

Let $\Theta:\mathcal{V}^{1,0}\rightarrow\mathcal{H}^{1,0}$ denote the complex horizontal map associated to $\mathcal{H}^{1,0}$, and let $\hat{\Theta}:\mathcal{V}_{\mathbb{R}}\rightarrow \mathcal{H}_{\mathbb{R}}$ denote the real horizontal map associated to $\mathcal{H}_{\mathbb{R}}$. Since $^o:\mathcal{V}^{1,0}\rightarrow\mathcal{V}_{\mathbb{R}}$ is a $\mathbb{R}$-isomorphism, we get a $\mathbb{R}$-isomorphism $^{\hat{}}:\mathcal{H}^{1,0}\rightarrow\mathcal{H}_{\mathbb{R}}$ given by
$$
\hat{H}=\hat{\Theta}((\Theta^{-1}(H))^o),\quad \forall H\in \mathcal{H}^{1,0}.
$$
Note that $\widehat{\delta_\alpha}=\frac{\delta}{\delta x^\alpha}$ and $\widehat{i\delta_\alpha}=\frac{\delta}{\delta x^{\alpha^\ast}}$.

Let
$$\hat{\chi}=y^j\frac{\delta}{\delta x^j}=y^j\left(\frac{\partial}{\partial x^j}-\hat{\Gamma}_j^k\frac{\partial}{\partial y^k}\right)\quad\mbox{and}\quad\chi=v^\alpha\left(\frac{\partial}{\partial z^\alpha}-\Gamma_{;\alpha}^\beta\frac{\partial}{\partial v^\beta}\right)$$
 denote the real and complex radial horizontal vector fields associated to $F$, respectively. Let $\hat{\Omega}$ and $\Omega$ denote the curvature operators of the Cartan connection $\nabla$ and the Chern-Finsler connection $D$ associated to $F$, respectively.

 Using some known results established by Abate and Patrizio in section 2.6 in \cite{AP}, we are now able to obtain the following theorem which shows the specialities of strongly convex K\"ahler-Berwald metrics.

\begin{theorem}\label{SE}
Let $F: T^{1,0}M\rightarrow[0,+\infty)$ be a strongly convex K\"ahler-Berwald metric on a complex manifold $M$. Then for all $H\in\mathcal{H}^{1,0}$ and $V\in\mathcal{V}^{1,0}$, we have

(1) $\chi^o=\hat{\chi}$;

(2) $\hat{\Theta}$ commutes with  $ \pmb{J}$;

(3) $(\pmb{J}H)^o= \pmb{J}H^o$;

(4) $\mathcal{H}_{\mathbb{R}}$ is $ \pmb{J}$-invariant;

(5) $H=(H^o)_o$;

(6) $\Gamma_{;\alpha}^\beta=\hat{\Gamma}_{\alpha}^\beta+i\hat{\Gamma}_{\alpha}^{\beta^\ast  }$;

(7) $(\frac{\delta}{\delta x^\alpha})_o=\frac{\delta}{\delta z^\alpha}$ and $(\frac{\delta}{\delta x^{\alpha^\ast   }})_o=i\frac{\delta}{\delta z^\alpha}$;

(8)
$$
\hat{\Gamma}_a^b=\left\{
                      \begin{array}{ll}
                        \text{Re}\,\Gamma_{;\alpha}^\beta, & \hbox{if}\;\;1\leq a,b\leq n, \\
                        \text{Im}\,\Gamma_{;\alpha}^\beta, & \hbox{if}\;\;1\leq a\leq n\;\;\text{and}\;\;n+1\leq b\leq 2n, \\
                        -\text{Im}\,\Gamma_{;\alpha}^\beta, & \hbox{if}\;\;n+1\leq a\leq 2n\;\;\text{and}\;\;1\leq b\leq n, \\
                        \text{Re}\,\Gamma_{;\alpha}^\beta, & \hbox{if}\;\;n+1\leq a,b\leq 2n.
                      \end{array}
                    \right.
$$

(9) $\nabla_{\hat{\chi}}V^o=(\nabla_{\chi^o}V)^o$;

(10) $\nabla_{\hat{H}}V^o=(\nabla_{H^o}V)^o$;

(11) $\langle\hat{\Omega}(\hat{\chi},\hat{H})\hat{H}\vert \hat{\chi}\rangle=\langle(\Omega(H,\overline{\chi})\chi)^o\vert H^o\rangle-\langle(\Omega(\chi,\overline{H})\chi)^o\vert H^o\rangle$;

(12) $\Gamma_{\gamma;\alpha}^\beta=\hat{\Gamma}_{\gamma;\alpha}^\beta+i\hat{\Gamma}_{\gamma;\alpha}^{\beta^\ast  }$;

(13)  $\nabla_X \pmb{J}=0$ for any $X\in\mathcal{H}_{\mathbb{R}}$;

(14) $\langle\hat{\Omega}(\hat{\chi},\widehat{\pmb{J}\chi})\widehat{\pmb{J}\chi}\vert \hat{\chi}\rangle=2\langle\Omega(\chi,\overline{\chi})\chi,\chi\rangle$.
 \end{theorem}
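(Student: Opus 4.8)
The keystone is item (13), which is immediate: since $F$ is a strongly convex K\"ahler--Berwald metric, Theorem \ref{Th-d} gives $\nabla_X\pmb{J}=0$ for every $X\in\mathcal{H}_{\mathbb{R}}$, and everything else unfolds from this together with the identifications of Section 3 and the results of \cite{AP}, Section 2.6. First I would dispatch the structural items: (13) and Corollary \ref{cor-a} give \eqref{eq-cc}, hence $\mathcal{H}_{\mathbb{R}}$ is $\pmb{J}$-invariant, which is (4); the proposition showing that $\hat{\Theta}$ commutes with $\pmb{J}$ whenever $\nabla_X\pmb{J}=0$ on $\mathcal{H}_{\mathbb{R}}$ gives (2), and combining (2) with the $\mathbb{C}$-linearity of $\Theta$ and the identity $(iV)^o=\pmb{J}V^o$ on $\mathcal{V}^{1,0}$ gives (3). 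The computation already carried out inside the proof of Theorem \ref{Th-c} (which uses only horizontal parallelism) yields $\mathcal{N}_{;\alpha}^{\beta}=\Gamma_{;\alpha}^{\beta}$, i.e.\ (6), and $\mathcal{N}_{\beta;\alpha}^{\mu}=\Gamma_{\beta;\alpha}^{\mu}$, i.e.\ (12); splitting (6) into real and imaginary parts via \eqref{eq-cc} gives (8), and (6) with (4) and Proposition \ref{pro-b} gives (7). Item (1) is the direct computation $\chi^o=\hat{\Theta}\big((\Theta^{-1}\chi)^o\big)=\hat{\Theta}\big(y^j\frac{\partial}{\partial y^j}\big)=y^j\frac{\delta}{\delta x^j}=\hat{\chi}$, using $\big(v^\alpha\frac{\partial}{\partial v^\alpha}\big)^o=y^j\frac{\partial}{\partial y^j}$; and (5) holds because (6) forces $\widetilde{\mathcal{H}}^{1,0}=\mathcal{H}^{1,0}$, so that $^o$ and $_o$ are mutually inverse.

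Next I would establish (9) and (10). By (12)--(13), under the isomorphism $^o$ the horizontal Cartan coefficients $\hat{\Gamma}^k_{j;l}$ reassemble into the horizontal Chern--Finsler coefficients $\Gamma^{\beta}_{\gamma;\alpha}$, and the horizontal part of the computation in the proof of Theorem \ref{Th-b} (which needs only \eqref{eq-ab}) shows that the extended Cartan connection satisfies $\nabla_{\delta/\delta z^\gamma}\frac{\partial}{\partial v^\alpha}=\Gamma^{\beta}_{\gamma;\alpha}\frac{\partial}{\partial v^\beta}$ and $\nabla_{\delta/\delta\bar{z}^\gamma}\frac{\partial}{\partial v^\alpha}=0$, exactly as for $D$. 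Transporting these equalities through the isomorphisms (using (7) to identify $\frac{\delta}{\delta x^\alpha}$ with $\frac{\delta}{\delta z^\alpha}$) yields $\nabla_{\hat{\chi}}V^o=(\nabla_{\chi^o}V)^o$ and $\nabla_{\hat{H}}V^o=(\nabla_{H^o}V)^o$, i.e.\ (9) and (10).

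I expect (11) to be the main obstacle. The plan is to expand the Cartan curvature through $\hat{\Omega}(X,Y)V=\nabla_X\nabla_YV-\nabla_Y\nabla_XV-\nabla_{[X,Y]}V$, write the real radial and flag horizontal vectors as $\hat{\chi}=\chi+\bar{\chi}$ and $\hat{H}=H+\bar{H}$ in $\widetilde{\mathcal{H}}_{\mathbb{C}}=\mathcal{H}^{1,0}\oplus\mathcal{H}^{0,1}$, and use (9)--(10) and $\nabla_{\bar{H}}V=0$ for $V\in\mathcal{V}^{1,0}$ to rewrite the four resulting cross-terms in terms of the Chern--Finsler curvature $\Omega$. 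The delicate point is that the vertical Cartan and Chern--Finsler coefficients need \emph{not} agree for a K\"ahler--Berwald metric --- by Theorem \ref{mth-a} that would force the stronger $\nabla\pmb{J}=0$ --- so one must verify that this discrepancy, which enters only through the vertical part of the brackets $[\delta_\mu,\delta_{\bar\nu}]$, contributes nothing to the pairing $\langle\hat{\Omega}(\hat{\chi},\hat{H})\hat{H}\,|\,\hat{\chi}\rangle$; this is precisely where the machinery of \cite{AP}, Section 2.6, is invoked, and it produces the two-term identity (11).

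Finally, (14) is the specialization of (11) to $H=\pmb{J}\chi$. Then $\hat{H}=\widehat{\pmb{J}\chi}=\pmb{J}\hat{\chi}$ by (2), and since $\pmb{J}$ acts as multiplication by $i$ on $\mathcal{H}^{1,0}$, the $\mathbb{C}$-bilinearity of the curvature $2$-form gives $\Omega(\pmb{J}\chi,\bar{\chi})\chi=i\,\Omega(\chi,\bar{\chi})\chi$ and $\Omega(\chi,\overline{\pmb{J}\chi})\chi=-i\,\Omega(\chi,\bar{\chi})\chi$; applying (3) turns the two terms on the right-hand side of (11) into $+\langle\pmb{J}(\Omega(\chi,\bar{\chi})\chi)^o\,|\,\pmb{J}\hat{\chi}\rangle$ and $-\langle-\pmb{J}(\Omega(\chi,\bar{\chi})\chi)^o\,|\,\pmb{J}\hat{\chi}\rangle$, so they \emph{add}, which is the source of the factor $2$. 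To conclude, set $R=\Omega(\chi,\bar{\chi})\chi\in\mathcal{V}^{1,0}$; by homogeneity $G_{\alpha\beta}v^\beta=0$, hence $\langle\langle R,\chi\rangle\rangle=0$, so that Proposition \ref{PS} together with the relations established in the proof of Lemma \ref{L-a} give $\langle\pmb{J}R^o\,|\,\pmb{J}\hat{\chi}\rangle=\text{Re}\,\langle R,\chi\rangle=\langle R,\chi\rangle$, the last equality because $\langle\Omega(\chi,\bar{\chi})\chi,\chi\rangle$ is real. Combining these steps yields $\langle\hat{\Omega}(\hat{\chi},\widehat{\pmb{J}\chi})\widehat{\pmb{J}\chi}\,|\,\hat{\chi}\rangle=2\langle\Omega(\chi,\bar{\chi})\chi,\chi\rangle$, as claimed.
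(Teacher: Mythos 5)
Your proposal is correct and follows essentially the same route as the paper: item (13) comes from Theorem \ref{Th-d}, the structural items (1)--(10) and (12) from the Section 3 identifications together with the results of \cite{AP}, Section 2.6, and (14) by specializing (11) to $H=\pmb{J}\chi=i\chi$ and using $\langle\!\langle\cdot,\chi\rangle\!\rangle=0$ and the reality of $\langle\Omega(\chi,\overline{\chi})\chi,\chi\rangle$. The one place the paper is more precise than your sketch is (11): it cites \cite{AP}, Theorem 2.6.9 explicitly and observes that the extra term $\left(\nabla_\chi\tau^{\mathcal{H}}\right)(\chi,\overline{W})$ vanishes because $\partial\Gamma_{;\mu}^\alpha/\partial\bar{v}^\gamma=0$ for a Berwald metric, which is exactly the verification you correctly flag as the delicate point but leave to the machinery of \cite{AP}.
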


\begin{proof}
The assertion (1) follows from Proposition 2.6.2 in \cite{AP}.

The assertion (5) follows from Theorem 2.6.4 in \cite{AP}, which together with Proposition  2.6.3 in \cite{AP} implies the assertions (2)-(4) and (6)-(8).

The assertion (9) follows from Theorem 2.6.6 in \cite{AP}.

The assertion (10) follows from Theorem 2.6.8 in \cite{AP} since by assertion (5), we have $\hat{H}=H^o$.

The assertion (11) follows from Theorem 2.6.9 in \cite{AP}. Since for a K\"ahler-Berwald metric we have $\frac{\partial\Gamma_{;\mu}^\alpha}{\partial \bar{v}^\gamma}=0$, hence $\left(\nabla_\chi\tau^{\mathcal{H}}\right)(\chi,\overline{W})=0$. Thus $\left(\left(\nabla_\chi\tau^{\mathcal{H}}\right)(\chi,\overline{\Theta^{-1}(H)})\right)^o=0$ for any $H\in\mathcal{H}^{1,0}$.

The assertion (12) follows from Theorem \ref{Th-ab}.

The assertion (13) follows from Theorem \ref{Th-d}.

Substituting $H=\pmb{J}\chi=i\chi$ into assertion (11), we get $$\langle\hat{\Omega}(\hat{\chi},\widehat{\pmb{J}\chi})\widehat{\pmb{J}\chi}\vert \hat{\chi}\rangle=\langle(\Omega(i\chi,\overline{\chi})\chi)^o\vert (i\chi)^o\rangle-\langle(\Omega(\chi,\overline{i\chi})\chi)^o\vert (i\chi)^o\rangle.$$
This together with \eqref{VW-a}
implies
  \begin{eqnarray*}
  \langle\hat{\Omega}(\hat{\chi},\widehat{\pmb{J}\chi})\widehat{\pmb{J}\chi}\vert \hat{\chi}\rangle&=&\text{Re}[\langle\Omega(i\chi,\overline{\chi})\chi,i\chi\rangle-\langle \Omega(\chi,\overline{i\chi})\chi,i\chi\rangle]\\
  &=&\text{Re}[\langle\Omega(\chi,\overline{\chi})\chi,\chi\rangle+\langle \Omega(\chi,\overline{\chi})\chi,\chi\rangle]\\
  &=&2\langle\Omega(\chi,\overline{\chi})\chi,\chi\rangle
  \end{eqnarray*}
where we use the fact that $\langle\!\langle \cdot,\chi\rangle\!\rangle=0$ and $\langle\Omega(\chi,\overline{\chi})\chi,\chi\rangle$ is real-valued.
\end{proof}

\begin{proposition}\label{FF}
Let $\Phi=-iG_{\alpha\bar{\beta}}dz^{\alpha}\wedge d\bar{z}^{\beta}$ be the fundamental form of a strongly convex complex Finsler metric $F:T^{1,0}M\rightarrow[0,+\infty)$ on a complex manifold $M$. Then $\Phi$ is a real horizontal $(1,1)$-form on $\tilde{M}$. If moreover, $\nabla_X\pmb{J}=0$ for any $X\in\mathcal{H}_{\mathbb{R}}$, then
\begin{equation}
\Phi(X,Y)=\langle X\vert \pmb{J}Y\rangle -\langle \pmb{J}X\vert Y\rangle
\end{equation}
for any $X,Y\in\mathcal{H}_{\mathbb{R}}$.
\end{proposition}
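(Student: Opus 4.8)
The plan is to dispatch the unconditional assertion first and then verify the displayed identity by evaluating both sides on a $\pmb{J}$-adapted local frame of $\mathcal{H}_{\mathbb{R}}$.

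First I would prove that $\Phi$ is a real horizontal $(1,1)$-form. Since $(G_{\alpha\bar\beta})$ is Hermitian, $\overline{G_{\alpha\bar\beta}}=G_{\beta\bar\alpha}$, so conjugating the defining expression of $\Phi$ and relabeling indices gives $\overline{\Phi}=\Phi$; being a combination of $dz^\alpha\wedge d\bar z^\beta$, it is of type $(1,1)$ on the complex manifold $T^{1,0}M$. Because $dz^\alpha=dx^\alpha+i\,dx^{\alpha^\ast}$ and $d\bar z^\beta=dx^\beta-i\,dx^{\beta^\ast}$ involve only the $dx$-differentials, they annihilate every $\partial/\partial y^j$; hence $\Phi(X,\cdot)=0$ whenever $X\in\mathcal{V}_{\mathbb{R}}$, which is precisely the statement that $\Phi$ is a real horizontal $(1,1)$-form on $\tilde M$.

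For the second assertion I would argue as follows. Under the hypothesis $\nabla_X\pmb{J}=0$ for all $X\in\mathcal{H}_{\mathbb{R}}$, Corollary \ref{cor-a} yields the identities \eqref{eq-cc}, so $\mathcal{H}_{\mathbb{R}}$ is $\pmb{J}$-invariant and $\{\delta/\delta x^\alpha,\ \delta/\delta x^{\alpha^\ast}\}$ is a local real frame of $\mathcal{H}_{\mathbb{R}}$ with $\pmb{J}(\delta/\delta x^\alpha)=\delta/\delta x^{\alpha^\ast}$ and $\pmb{J}(\delta/\delta x^{\alpha^\ast})=-\delta/\delta x^{\alpha}$; in particular $\pmb{J}Y\in\mathcal{H}_{\mathbb{R}}$ for $Y\in\mathcal{H}_{\mathbb{R}}$, so the right-hand side is well defined through the extension of $\langle\cdot\vert\cdot\rangle$ to $\mathcal{H}_{\mathbb{R}}$ via $\hat\Theta$. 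Since both sides are $\mathbb{R}$-bilinear and skew in $X,Y$, it suffices to compare them on the pairs $(\delta/\delta x^\mu,\delta/\delta x^\nu)$, $(\delta/\delta x^\mu,\delta/\delta x^{\nu^\ast})$ and $(\delta/\delta x^{\mu^\ast},\delta/\delta x^{\nu^\ast})$. Using $dz^\alpha(\delta/\delta x^j)=\delta^\alpha_j+i\,\delta^{\alpha^\ast}_j$ and its conjugate, I would compute directly
\[
\Phi\!\left(\tfrac{\delta}{\delta x^\mu},\tfrac{\delta}{\delta x^\nu}\right)=2\,\text{Im}\,G_{\mu\bar\nu},\qquad
\Phi\!\left(\tfrac{\delta}{\delta x^\mu},\tfrac{\delta}{\delta x^{\nu^\ast}}\right)=-2\,\text{Re}\,G_{\mu\bar\nu},\qquad
\Phi\!\left(\tfrac{\delta}{\delta x^{\mu^\ast}},\tfrac{\delta}{\delta x^{\nu^\ast}}\right)=2\,\text{Im}\,G_{\mu\bar\nu}.
\]
On the other side, $\langle\delta/\delta x^j\vert\delta/\delta x^k\rangle=\langle\hat\Theta^{-1}(\delta/\delta x^j)\vert\hat\Theta^{-1}(\delta/\delta x^k)\rangle=g_{jk}$, so, after replacing $\pmb{J}(\delta/\delta x^\nu)$ by $\delta/\delta x^{\nu^\ast}$, etc., the right-hand side on the same three pairs becomes $g_{\mu\nu^\ast}-g_{\mu^\ast\nu}$, $-g_{\mu\nu}-g_{\mu^\ast\nu^\ast}$, and $g_{\mu\nu^\ast}-g_{\mu^\ast\nu}$. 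I would then finish by invoking the relations obtained in the proof of Lemma \ref{L-a}: taking $V=(\partial/\partial y^\mu)_o=\partial/\partial v^\mu$ and $W=(\partial/\partial y^\nu)_o=\partial/\partial v^\nu$ gives $\langle X\vert\pmb{J}Y\rangle-\langle\pmb{J}X\vert Y\rangle=2\,\text{Im}\,\langle V,W\rangle$ and $\langle X\vert Y\rangle+\langle\pmb{J}X\vert\pmb{J}Y\rangle=2\,\text{Re}\,\langle V,W\rangle$, whence $g_{\mu\nu^\ast}-g_{\mu^\ast\nu}=2\,\text{Im}\,G_{\mu\bar\nu}$ and $g_{\mu\nu}+g_{\mu^\ast\nu^\ast}=2\,\text{Re}\,G_{\mu\bar\nu}$. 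These match the computed values of $\Phi$ on each pair, so the two $\mathbb{R}$-bilinear forms agree on $\mathcal{H}_{\mathbb{R}}$.

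I expect the main obstacle to be purely organizational: keeping the real/complex conventions straight (the $\alpha^\ast$ index shift, the factors of $i$ coming from $dz^\alpha$ and $dv^\alpha$, and the interaction of the Hermitian product $\langle\cdot,\cdot\rangle$ with the real product $\langle\cdot\vert\cdot\rangle$). The one genuinely conceptual ingredient is that the stated formula only makes sense once $\pmb{J}$ preserves $\mathcal{H}_{\mathbb{R}}$ — this is exactly what the hypothesis $\nabla_X\pmb{J}=0$ on $\mathcal{H}_{\mathbb{R}}$ supplies through Corollary \ref{cor-a}, so that beyond Lemma \ref{L-a} and the definition of $\hat\Theta$ no further input is required.
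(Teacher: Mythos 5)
Your proposal is correct and rests on the same ingredients as the paper's proof: the hypothesis gives \eqref{eq-cc} via Corollary \ref{cor-a}, so $\mathcal{H}_{\mathbb{R}}$ is $\pmb{J}$-invariant, and the identity then reduces to the relations of Lemma \ref{L-a} ($g_{\mu\nu^\ast}-g_{\mu^\ast\nu}=2\,\mathrm{Im}\,G_{\mu\bar\nu}$, $g_{\mu\nu}+g_{\mu^\ast\nu^\ast}=2\,\mathrm{Re}\,G_{\mu\bar\nu}$), exactly as in the paper, which computes $\Phi(X,Y)=2\,\mathrm{Im}\langle X_o,Y_o\rangle$ invariantly and then quotes Lemma \ref{L-a}. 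Your only deviation is executional: you verify the identity on the adapted frame $\{\delta/\delta x^j\}$, applying Lemma \ref{L-a} on the vertical bundle (where it is stated) and transporting the metric to $\mathcal{H}_{\mathbb{R}}$ by the definition via $\hat{\Theta}$, which neatly sidesteps the paper's tacit transfer of Lemma \ref{L-a} to horizontal $(1,0)$ lifts; the frame computations and the matching of the three pair types are all correct.
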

\begin{proof}
It is clear that $\Phi$ is real $(1,1)$-form on $\tilde{M}$. If $\nabla_X  \pmb{J}\equiv 0$ for all $X\in\mathcal{H}_{\mathbb{R}}$, then $\mathcal{H}_{\mathbb{R}}$ is $\pmb{J}$-invariant and by assertion (5) in Theorem \ref{SE}, we have $H=(H_o)^o$ for any $H\in\mathcal{H}_{\mathbb{R}}$, namely $\mathcal{H}_{\mathbb{R}}\cong\mathcal{H}^{1,0}$. Thus
\begin{eqnarray*}
\Phi(X,Y)
&=&-iG_{\alpha\bar{\beta}}dz^\alpha\wedge d\bar{z}^\beta(V+\overline{V},W+\overline{W})\\
&=&-i\left(G _{\alpha\bar{\beta}}V^\alpha\overline{W}^\beta-G_{\alpha\bar{\beta}}W^\alpha\overline{V}^\beta\right)\\
&=&-i\left[\langle V,W\rangle-\overline{\langle V,W}\rangle\right]\\
&=&2\text{Im}\langle V,W\rangle.
\end{eqnarray*}
for $V=\frac{1}{2}(X-i\pmb{J}X)=X_o$ and $W=\frac{1}{2}(Y-i\pmb{J}Y)=Y_o\in \mathcal{H}^{1,0}$ with $X,Y\in \mathcal{H}_{\mathbb{R}}$.

Since by Lemma \ref{L-a}, we also have
\begin{eqnarray*}
\langle V,W\rangle=2\langle V\vert \overline{W}\rangle=\frac{1}{2}\left\{\langle X\vert Y\rangle+\langle JX\vert JY\rangle+i\left[\langle X\vert \pmb{J}Y\rangle -\langle \pmb{J}X\vert Y\rangle\right]\right\},
\end{eqnarray*}
this completes the proof.
\end{proof}

For a strongly convex complex Finsler manifold $(M,F)$, denote $d:\wedge^k\tilde{M}\rightarrow\wedge^{k+1}\tilde{M}$ the exterior derivative operator on $\tilde{M}$. Then
\begin{eqnarray}
d=d_H+d_V,\quad d_H=dx^j\wedge\nabla_{\frac{\delta}{\delta x^j}},\quad d_V=\delta y^j\wedge\nabla_{\frac{\partial}{\partial y^j}},\label{dh}
\end{eqnarray}
where $\nabla$ is the Cartan connection of $F$. If we denote
\begin{equation}
 \partial_H=dz^\alpha\wedge D_{\delta_\alpha},\; \bar{\partial}_H=d\bar{z}^\alpha\wedge D_{\delta_{\bar{\alpha}}},\; \partial_V=\psi^\gamma\wedge D_{\frac{\partial}{\partial v^\gamma}},\; \bar{\partial}_V=\bar{\psi}^\gamma\wedge D_{\frac{\partial}{\partial\bar{v}^\gamma}},\label{dv}
\end{equation}
where $D$  is the Chern-Finsler connection of $F$, then in general $d_H\neq \partial_H+\bar{\partial}_H$ since in general the Cartan connection $\nabla$ may not coincides with the Chern-Finsler connection $D$.
\begin{proposition}Let $d_H,d_V,\partial_H,\bar{\partial}_H,\partial_V,\bar{\partial}_V$ be defined by \eqref{dh} and \eqref{dv}, respectively.
If $\nabla_X\pmb{J}=0$ for all $X\in\mathcal{H}_{\mathbb{R}}$, then
\begin{equation}
d_H=\partial_H+\bar{\partial}_H,\quad d_V=\partial_V+\bar{\partial}_V.
\end{equation}
\end{proposition}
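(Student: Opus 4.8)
The plan is to rewrite $d_H$ and $d_V$ in the complex adapted (co)frame and then compare them, as first-order operators on $\Omega^{\bullet}(\tilde M)$, with $\partial_H+\bar\partial_H$ and $\partial_V+\bar\partial_V$. First I would record that, by Theorem \ref{Th-c}, $F$ is a K\"ahler--Berwald metric, so $\mathcal H_{\mathbb R}$ is $\pmb J$-invariant (Corollary \ref{cor-a} and the proposition following it); together with $\mathcal N^{\beta}_{;\alpha}=\Gamma^{\beta}_{;\alpha}$ (assertion (6) of Theorem \ref{SE}) and Proposition \ref{pro-b} this gives $\delta/\delta z^{\alpha}=\delta_{\alpha}$, $\delta v^{\alpha}=\psi^{\alpha}$, and the frame identities $dz^{\alpha}=dx^{\alpha}+i\,dx^{\alpha^{\ast}}$, $\psi^{\alpha}=\delta y^{\alpha}+i\,\delta y^{\alpha^{\ast}}$ (and conjugates), $\partial/\partial y^{\alpha}=\partial/\partial v^{\alpha}+\partial/\partial\bar v^{\alpha}$, $\partial/\partial y^{\alpha^{\ast}}=i(\partial/\partial v^{\alpha}-\partial/\partial\bar v^{\alpha})$. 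Taking half-sums and half-differences over starred and unstarred indices yields $dx^{j}\otimes\tfrac{\delta}{\delta x^{j}}=dz^{\alpha}\otimes\delta_{\alpha}+d\bar z^{\alpha}\otimes\delta_{\bar\alpha}$ and $\delta y^{j}\otimes\tfrac{\partial}{\partial y^{j}}=\psi^{\alpha}\otimes\tfrac{\partial}{\partial v^{\alpha}}+\bar\psi^{\alpha}\otimes\tfrac{\partial}{\partial\bar v^{\alpha}}$, so that, with $\nabla$ the complexified Cartan connection (cf.\ \eqref{ECC}),
\[
d_{H}=dz^{\alpha}\wedge\nabla_{\delta_{\alpha}}+d\bar z^{\alpha}\wedge\nabla_{\delta_{\bar\alpha}},\qquad
d_{V}=\psi^{\alpha}\wedge\nabla_{\partial/\partial v^{\alpha}}+\bar\psi^{\alpha}\wedge\nabla_{\partial/\partial\bar v^{\alpha}} .
\]

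For the horizontal identity I would then observe that both $d_{H}$ and $\partial_{H}+\bar\partial_{H}=dz^{\alpha}\wedge D_{\delta_{\alpha}}+d\bar z^{\alpha}\wedge D_{\delta_{\bar\alpha}}$ are degree-$1$ antiderivations of $\Omega^{\bullet}(\tilde M)$, so it suffices to check equality on functions and on the generating $1$-forms $dz^{\alpha},d\bar z^{\alpha},\psi^{\alpha},\bar\psi^{\alpha}$; on functions both sides reduce to $dz^{\alpha}\,\delta_{\alpha}f+d\bar z^{\alpha}\,\delta_{\bar\alpha}f$ by the frame identities above. On the generators I would use the part of the proof of Theorem \ref{Th-b} dealing with the horizontal directions $\delta_{\gamma},\delta_{\bar\gamma}$, which uses only the horizontal relations \eqref{eq-ab} and hence remains valid here: it gives $\nabla_{\delta_{\gamma}}\tfrac{\partial}{\partial v^{\alpha}}=\mathcal N^{\beta}_{\alpha;\gamma}\tfrac{\partial}{\partial v^{\beta}}$ and $\nabla_{\delta_{\bar\gamma}}\tfrac{\partial}{\partial v^{\alpha}}=0$; since $\mathcal N^{\beta}_{\alpha;\gamma}=\Gamma^{\beta}_{\alpha;\gamma}$ (assertion (12) of Theorem \ref{SE}), the operators $\nabla_{\delta_{\gamma}}$ and $D_{\delta_{\gamma}}$ agree on $\mathcal V_{\mathbb C}$, and, transported by $\hat\Theta$ (which commutes with both connections and with $\pmb J$, by Theorem \ref{SE}), on all of $T_{\mathbb C}\tilde M$, hence on $\Omega^{\bullet}(\tilde M)$. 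This gives $d_{H}=\partial_{H}+\bar\partial_{H}$.

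The hard part will be the vertical identity $d_{V}=\partial_{V}+\bar\partial_{V}$, which cannot be obtained in the same way: by Proposition \ref{PS} the Riemannian metric $\langle\cdot|\cdot\rangle$ on $\mathcal V$ differs from the Hermitian metric $\langle\cdot,\cdot\rangle$ by the symmetric product $\langle\!\langle\cdot,\cdot\rangle\!\rangle$, so the vertical parts of the Cartan and Chern--Finsler connections are genuinely different. Instead I would aim to show that the degree-$1$ antiderivation $\psi^{\alpha}\wedge\big(\nabla_{\partial/\partial v^{\alpha}}-D_{\partial/\partial v^{\alpha}}\big)+\bar\psi^{\alpha}\wedge\big(\nabla_{\partial/\partial\bar v^{\alpha}}-D_{\partial/\partial\bar v^{\alpha}}\big)$ vanishes on $\Omega^{\bullet}(\tilde M)$, which again reduces to evaluating it on the generators. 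On $\psi^{\beta},\bar\psi^{\beta}$ the vertical coefficients of both connections are totally symmetric in their two vertical lower indices, so wedging with $\psi^{\alpha}$ (resp.\ $\bar\psi^{\alpha}$) antisymmetrizes them away; on $dz^{\beta},d\bar z^{\beta}$ the surviving mixed terms should be killed using the homogeneity identities $G_{\alpha\beta}v^{\beta}=0$ and $\tfrac{\partial}{\partial v^{\gamma}}(G_{\alpha\bar\beta})v^{\alpha}=0$ together with the $\pmb J$-invariance of $\mathcal H_{\mathbb R}$, i.e.\ exactly the information that $F$ is K\"ahler--Berwald. Carrying out this coordinate computation---the computational heart of the argument---yields $d_{V}=\partial_{V}+\bar\partial_{V}$ and finishes the proof.
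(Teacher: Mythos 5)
Your treatment of the horizontal identity is correct and in fact considerably more detailed than the paper's own argument. The paper's entire proof is the single observation that, since $F$ is K\"ahler--Berwald, $\mathcal{H}_{\mathbb{R}}$ is $\pmb{J}$-invariant and $\widetilde{\mathcal{H}}^{1,0}=\mathcal{H}^{1,0}$ — i.e.\ it records only the identification of the adapted frames ($\tfrac{\delta}{\delta z^\alpha}=\delta_\alpha$, $\delta v^\alpha=\psi^\alpha$) and leaves the comparison of the covariant-derivative operators implicit. You correctly supply the missing step for $d_H$: the horizontal computations in the proof of Theorem \ref{Th-b} use only \eqref{eq-ab}, so they are valid under the horizontal hypothesis, and together with $\mathcal{N}^\beta_{\alpha;\gamma}=\Gamma^\beta_{\alpha;\gamma}$ from Theorem \ref{Th-c} they show $\nabla_{\delta_\gamma}=D_{\delta_\gamma}$ and $\nabla_{\delta_{\bar\gamma}}=D_{\delta_{\bar\gamma}}$ on $\mathcal{V}_{\mathbb{C}}$, hence on $\Omega^{\bullet}(\tilde M)$. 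That part is sound.

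The vertical identity, however, is left as an asserted computation, and the cancellation you propose does not go through as described. The symmetry $\mathcal{N}^\beta_{\alpha\gamma}=\mathcal{N}^\beta_{\gamma\alpha}$ (resp.\ $\Gamma^\beta_{\alpha\gamma}=\Gamma^\beta_{\gamma\alpha}$) does kill the action on the vertical generators $\psi^\beta,\bar\psi^\beta$ after wedging, but on the horizontal generators there is no antisymmetrization available: one computes $d_V(dz^\beta)=-\bigl(\hat\Gamma^\beta_{lj}+i\hat\Gamma^{\beta^\ast}_{lj}\bigr)\,\delta y^j\wedge dx^l$ versus $(\partial_V+\bar\partial_V)(dz^\beta)=-\Gamma^\beta_{\mu\gamma}\,\psi^\gamma\wedge dz^\mu$, and since the two-forms $\delta y^j\wedge dx^l$ are linearly independent (one factor vertical, one horizontal), equality forces the pointwise identities \eqref{eq-abc} together with $\mathcal{N}^\beta_{\mu\gamma}=\Gamma^\beta_{\mu\gamma}$. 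By Theorems \ref{Th-ab} and \ref{Th-cc} this is exactly the vertical parallelism $\nabla_Y\pmb{J}=0$ for $Y\in\mathcal{V}_{\mathbb{R}}$ and the coincidence of the vertical Cartan and Chern--Finsler coefficients, which is \emph{not} a consequence of the horizontal hypothesis (a non-Hermitian-quadratic complex Minkowski metric is K\"ahler--Berwald but its real and complex Cartan tensors need not match in this way). The homogeneity identities $G_{\alpha\beta}v^\beta=0$, $\tfrac{\partial}{\partial v^\gamma}(G_{\alpha\bar\beta})v^\alpha=0$ only control contractions with $v$, not the full tensors, so they cannot close this gap. In short: either the vertical half of the statement requires the stronger hypothesis $\nabla\pmb{J}=0$, or the operators $d_V,\partial_V,\bar\partial_V$ must be understood as acting only through the identification of the coframes $\delta y^j\leftrightarrow\psi^\alpha,\bar\psi^\alpha$ (which is all the paper's one-line proof actually establishes); your proposed coordinate computation, as outlined, would not terminate in the claimed identity.
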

\begin{proof}
By assumption, $F$ must be a K\"ahler-Berwald metric, hence $\mathcal{H}_{\mathbb{R}}$ is $\pmb{J}$-invariant and $\widetilde{\mathcal{H}}^{1,0}=\mathcal{H}^{1,0}$.
\end{proof}

\begin{theorem}\label{Th-3.8}
Let $F: T^{1,0}M\rightarrow [0,+\infty)$ be a strongly convex complex Finsler metric on a complex manifold $M$.  If  $\nabla_X  \pmb{J}=0$ for all $X\in\mathcal{H}_{\mathbb{R}}$, then
\begin{enumerate}
\item[(1)] $\nabla_X\varPhi= 0$  for all $X\in\mathcal{H}_{\mathbb{R}}$;

\item[(2)] $d_H\varPhi= 0$, i.e., $\varPhi$ is $d_H$-closed.
\end{enumerate}
\end{theorem}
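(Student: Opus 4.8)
The plan is to prove assertion (1) by a one-line covariant Leibniz computation, and then read off assertion (2) directly from the definition $d_H=dx^j\wedge\nabla_{\delta/\delta x^j}$.

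First I would collect the structural consequences of the hypothesis that are already in hand. Since $\nabla_X\pmb{J}=0$ for every $X\in\mathcal{H}_{\mathbb{R}}$, Corollary \ref{cor-a} together with Theorem \ref{Th-c} (equivalently assertions (4), (5), (13) of Theorem \ref{SE}) shows that $\mathcal{H}_{\mathbb{R}}$ is $\pmb{J}$-invariant, that $\hat{\Theta}$ commutes with $\pmb{J}$, and that $F$ is K\"ahler-Berwald; in particular $\pmb{J}$ restricts to an endomorphism of $\mathcal{H}_{\mathbb{R}}$ and, by Proposition \ref{FF}, $\varPhi$ is a real horizontal $(1,1)$-form with
$$\varPhi(Y,Z)=\langle Y\vert\pmb{J}Z\rangle-\langle\pmb{J}Y\vert Z\rangle,\qquad Y,Z\in\mathcal{H}_{\mathbb{R}}.$$
Because the Cartan connection is good, $\nabla$ commutes with the real horizontal map $\hat{\Theta}$, hence it acts on $\mathcal{H}_{\mathbb{R}}$ (and thus on $\wedge^2 T_{\mathbb{R}}^*\tilde{M}$) while preserving the splitting $T_{\mathbb{R}}\tilde{M}=\mathcal{H}_{\mathbb{R}}\oplus\mathcal{V}_{\mathbb{R}}$; moreover both the metric compatibility of Theorem \ref{cartan}(ii) and the relation $\nabla_X(\pmb{J}\,\cdot\,)=\pmb{J}(\nabla_X\,\cdot\,)$ for $X\in\mathcal{H}_{\mathbb{R}}$ transfer from $\mathcal{V}_{\mathbb{R}}$ to $\mathcal{H}_{\mathbb{R}}$ along $\hat{\Theta}$.

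With this setup, for $X,Y,Z\in\mathcal{H}_{\mathbb{R}}$ I would expand $X\big(\varPhi(Y,Z)\big)$ using metric compatibility and $\nabla_X\pmb{J}=0$, obtaining
$$X\big(\varPhi(Y,Z)\big)=\langle\nabla_XY\vert\pmb{J}Z\rangle+\langle Y\vert\pmb{J}\nabla_XZ\rangle-\langle\pmb{J}\nabla_XY\vert Z\rangle-\langle\pmb{J}Y\vert\nabla_XZ\rangle=\varPhi(\nabla_XY,Z)+\varPhi(Y,\nabla_XZ),$$
so $(\nabla_X\varPhi)(Y,Z)=0$; when one of the arguments is vertical, $\varPhi$ and all the terms $\varPhi(\nabla_X\cdot,\cdot)$ vanish because $\varPhi$ is horizontal and $\nabla$ preserves the horizontal/vertical decomposition. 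This proves (1). Assertion (2) is then immediate, since $\frac{\delta}{\delta x^j}\in\mathcal{H}_{\mathbb{R}}$ for each $j$ gives $\nabla_{\delta/\delta x^j}\varPhi=0$, hence $d_H\varPhi=dx^j\wedge\nabla_{\delta/\delta x^j}\varPhi=0$; this is the Finsler counterpart of the classical fact $d\omega=0$ on a K\"ahler manifold in Proposition \ref{prop-I}(5).

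I do not expect a genuine obstacle here. The only point requiring care is the bookkeeping involved in promoting the inner product, the metric compatibility of $\nabla$, and the identity $\nabla_X\pmb{J}=\pmb{J}\nabla_X$ from the vertical bundle $\mathcal{V}_{\mathbb{R}}$ to the horizontal bundle $\mathcal{H}_{\mathbb{R}}$ via the horizontal map — which is exactly where the \emph{goodness} of the Cartan connection and the $\pmb{J}$-invariance of $\mathcal{H}_{\mathbb{R}}$ are used. Once these identifications are established, both assertions follow from the displayed computation with no further input.
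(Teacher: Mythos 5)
Your proof of assertion (1) is exactly the paper's: express $\varPhi(Y,Z)=\langle Y\vert\pmb{J}Z\rangle-\langle\pmb{J}Y\vert Z\rangle$ via Proposition \ref{FF}, expand with horizontal metric compatibility, and cancel using $\nabla_X\pmb{J}=0$; your extra remark about vertical arguments is harmless bookkeeping. Where you diverge is assertion (2). You read $d_H\varPhi=dx^j\wedge\nabla_{\delta/\delta x^j}\varPhi$ off the operator formula \eqref{dh} and conclude immediately from (1). The paper instead evaluates $(d_H\varPhi)(X_1,X_2,X_3)$ through the classical Lie-bracket formula for the exterior derivative, substitutes $\nabla_{X_j}\varPhi=0$, and is then left with the combination $\varPhi(\nabla_{X_j}X_k-\nabla_{X_k}X_j-[X_j,X_k],\,\cdot\,)=\varPhi(\theta(X_j,X_k),\,\cdot\,)$, which vanishes because the horizontal torsion of the Cartan connection is \emph{vertical} (Theorem \ref{cartan}(iv)) while $\varPhi$ is horizontal. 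This torsion step is not decorative: it is precisely what reconciles the algebraic operator $dx^j\wedge\nabla_{\delta/\delta x^j}$ with the genuine exterior derivative restricted to horizontal arguments, i.e.\ it is what makes ``$\varPhi$ is $d_H$-closed'' the Finsler analogue of $d\omega=0$ in Proposition \ref{prop-I}(5) rather than a tautological restatement of (1). Your shortcut is formally valid under the literal definition \eqref{dh}, but if you intend (2) to carry the geometric content the paper intends, you should add the one extra line invoking $\theta(H,K)\in\mathcal{V}_{\mathbb{R}}$ to identify the two readings of $d_H\varPhi$; everything else in your argument stands.
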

\begin{proof}
	(1) Let $H,X,Y\in\mathcal{H}_{\mathbb{R}}$. Then by Proposition \ref{FF},
	\begin{eqnarray*}
		(\nabla_H\Phi)(X,Y)&=&H\Phi(X,Y)-\Phi(\nabla_HX,Y)-\Phi(X,\nabla_HY)\\
		&=&H\langle X\vert  \pmb{J}Y\rangle-H\langle  \pmb{J}X\vert Y\rangle-\langle \nabla_HX\vert  \pmb{J}Y\rangle+\langle  \pmb{J}(\nabla_HX)\vert Y\rangle\\
		&&-\langle X\vert  \pmb{J}(\nabla_HY)\rangle+\langle  \pmb{J}X\vert \nabla_HY\rangle.
	\end{eqnarray*}
Since the Cartan connection is horizontal metric, namely $H\langle X\vert Y\rangle=\langle \nabla_HX\vert Y\rangle+\langle X\vert \nabla_HY\rangle$, the above equality becomes
$$
(\nabla_H\Phi)(X,Y)=\langle X\vert \nabla_H( \pmb{J}Y)\rangle-\langle \nabla_H( \pmb{J}X)\vert Y\rangle+\langle  \pmb{J}(\nabla_HX)\vert Y\rangle-\langle X\vert  \pmb{J}(\nabla_HY)\rangle.
$$
 Since $\nabla_H \pmb{J}=0$, we have
	\begin{eqnarray*}
		&&\langle X\vert \nabla_H( \pmb{J}Y)\rangle-\langle \nabla_H( \pmb{J}X\vert Y\rangle+\langle \pmb{J}(\nabla_HX)\vert Y\rangle-\langle X\vert  \pmb{J}(\nabla_HY)\rangle\\
		&=&\langle X\vert  \pmb{J}(\nabla_HY)\rangle-\langle  \pmb{J}(\nabla_HX)\vert Y\rangle+\langle  \pmb{J}(\nabla_HX)\vert Y\rangle-\langle X\vert  \pmb{J}(\nabla_HY)\rangle\\
		&=&0.
	\end{eqnarray*}
	Therefore $\nabla_H\Phi=0$ for any $H\in\mathcal{H}_{\mathbb{R}}$.
	
	(2) Let $X_1,X_2,X_3\in\mathcal{H}_{\mathbb{R}}$. Then
	\begin{eqnarray*}
		(d_H\Phi)(X_1,X_2,X_3)&=&X_1(\Phi(X_2,X_3))-X_2(\Phi(X_1,X_3))+X_3(\Phi(X_1,X_2))\\
		&&-\Phi([X_1,X_2],X_3)+\Phi([X_1,X_3],X_2)-\Phi([X_2,X_3],X_1).
	\end{eqnarray*}
Using $\nabla_{X_j}\Phi=0$, this becomes
\begin{eqnarray*}
(d_H\Phi)(X_1,X_2,X_3)&=&\Phi(\nabla_{X_1}X_2-\nabla_{X_2}X_1,X_3)
	-\Phi(\nabla_{X_1}X_3-\nabla_{X_3}X_1,X_2)\\
	&&+\Phi(X_1,\nabla_{X_3}X_2-\nabla_{X_2}X_3)\\
		&&-\Phi([X_1,X_2],X_3)+\Phi([X_1,X_3],X_2)-\Phi([X_2,X_3],X_1).
\end{eqnarray*}
	Since  $\theta(X_j,X_k)=\nabla_{X_j}X_k-\nabla_{X_k}X_j-[X_j,X_k]$, and $\theta(X_j,X_k)\in\mathcal{V}_{\mathbb{R}}$, while $\Phi$ is horizontal, we get $(d_H\Phi)(X_1,X_2,X_3)=0$. Hence, $d_H\Phi=0$.
\end{proof}

\begin{remark}
The operator $d_H$ is not the same one defined on page 95 in \cite{AP}, where $d_H\Phi=0$ is equivalent to $F$ being a K\"ahler-Finsler metric. Thus converse of Theorem \ref{Th-3.8} may not be true. So  far in literature, however, explicit examples of K\"ahler-Finsler metrics \cite{Zhong-b,Lin-Zhong,Ge-Zhong, Cao-Ge-Zhong, Zhong-c}  were  all proved to be K\"ahler-Berwald metrics. It is still open whether there exists an example of K\"ahler-Finsler metric which is not a K\"ahler-Berwald metric.
\end{remark}

\begin{theorem}\label{Th-3.9}
Let $F: T^{1,0}M\rightarrow [0,+\infty)$ be a strongly convex weakly K\"ahler-Finsler metric on a complex manifold $M$ and $\sigma:[0,1]\rightarrow M$ a smooth regular curve in $M$. Then the following assertions are equivalent:
\begin{enumerate}
\item[(1)]  $F$ is a strongly convex K\"ahler-Berwald metric;

\item[(2)] The types of complexified  vectors in $T_{\mathbb{R}}M$ are preserved under parallel transport along $\sigma$ with respect to  $\nabla$;

\item[(3)] For any parallel real vector field $V$ along $\sigma$ with respect to  $\nabla$,  $ JV$ is also parallel along $\sigma$  with respect to $\nabla$;

\item[(4)] $\nabla_X  \pmb{J}\equiv 0$ for all $X\in\mathcal{H}_{\mathbb{R}}$;

\end{enumerate}
\end{theorem}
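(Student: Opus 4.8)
The plan is to prove the cycle $(1)\Leftrightarrow(4)$, $(4)\Rightarrow(3)$, $(3)\Leftrightarrow(2)$ and $(2)\Rightarrow(4)$; the substance lies in the equivalence $(1)\Leftrightarrow(4)$, which is essentially already available, and in bridging the curve-level statements $(2),(3)$ with the tensorial condition $(4)$. For $(1)\Leftrightarrow(4)$ I would simply invoke Theorem \ref{Th-d} for $(1)\Rightarrow(4)$ and Theorem \ref{Th-c} for $(4)\Rightarrow(1)$, noting that $\nabla_X\pmb{J}=0$ for all $X\in\mathcal{H}_{\mathbb{R}}$ already forces $F$ to be a (strongly convex) K\"ahler--Berwald metric.

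For $(4)\Rightarrow(3)$, I would first record that the parallel transport equation \eqref{parallel} only sees the nonlinear connection: by the symmetry $\hat{\Gamma}^k_{l;j}=\hat{\Gamma}^k_{j;l}$ together with \eqref{hcp} one has $\hat{\Gamma}^k_{l;j}(\sigma;\dot{\sigma})\dot{\sigma}^j=\hat{\Gamma}^k_l(\sigma;\dot{\sigma})$, so $V$ is $\nabla$-parallel along $\sigma$ iff $\dot{V}^k+V^l\hat{\Gamma}^k_l(\sigma;\dot{\sigma})=0$. Now $(4)$ implies, via Corollary \ref{cor-a}, that $\mathcal{H}_{\mathbb{R}}$ is $\pmb{J}$-invariant, i.e.\ the constant matrix $(J^k_l)$ of $J$ commutes pointwise with $(\hat{\Gamma}^k_l)$ on $\tilde{M}$; since $(JV)^k=J^k_lV^l$, a one-line computation gives $\frac{\pmb{D}(JV)^k}{dt}=J^k_l\frac{\pmb{D}V^l}{dt}$, so $JV$ is parallel whenever $V$ is. The equivalence $(3)\Leftrightarrow(2)$ is then the same linear-algebra fact as in the Hermitian case (Proposition \ref{prop-I}): $\nabla$-parallel transport along $\sigma$ gives $\mathbb{R}$-linear isomorphisms $P_t:T_{\sigma(0)}M\to T_{\sigma(t)}M$, and extending $\mathbb{C}$-linearly, ``types are preserved along $\sigma$'' says precisely $P_t\circ J=J\circ P_t$, which is in turn equivalent to $J(P_tV_0)=P_t(JV_0)$ for every $V_0$, i.e.\ to $(3)$.

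The main work is $(2)\Rightarrow(4)$ (equivalently $(3)\Rightarrow(4)$). Running the computation of $(4)\Rightarrow(3)$ in reverse, $(3)$ applied along curves with prescribed $1$-jet at an arbitrary point $(p;y)\in\tilde{M}$ yields the pointwise identity $J^l_m\hat{\Gamma}^k_l(p;y)=J^k_l\hat{\Gamma}^l_m(p;y)$, i.e.\ $\mathcal{H}_{\mathbb{R}}$ is $\pmb{J}$-invariant, i.e.\ \eqref{eq-cc} holds. By \eqref{geodesic-2}, exactly as in the proof of Proposition \ref{VT}, \eqref{eq-cc} is equivalent to the functions $\hat{\mathbb{G}}^\beta+i\hat{\mathbb{G}}^{\beta^{\ast}}$ being holomorphic in $v$. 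Since each $\hat{\mathbb{G}}^k$ is smooth off the zero section and positively $2$-homogeneous in $y$, the holomorphic map $\lambda\mapsto(\hat{\mathbb{G}}^\beta+i\hat{\mathbb{G}}^{\beta^{\ast}})(z;\lambda v)$ coincides with $\lambda^2(\hat{\mathbb{G}}^\beta+i\hat{\mathbb{G}}^{\beta^{\ast}})(z;v)$ for $\lambda\in\mathbb{R}^{+}$, hence for all $\lambda\in\mathbb{C}^{\ast}$ by analytic continuation; so each $\hat{\mathbb{G}}^\beta+i\hat{\mathbb{G}}^{\beta^{\ast}}$ extends to a homogeneous holomorphic polynomial of degree $2$ in $v$, every $\hat{\mathbb{G}}^k$ is a quadratic polynomial in $y$, and $F$ is a real Berwald metric, hence a real Landsberg metric by \cite{Shen}. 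Then $\dot{A}^k_{jl}=0$, so $\hat{\Gamma}^k_{j;l}=\hat{\mathbb{G}}^k_{jl}=\partial_{y^l}\hat{\Gamma}^k_j$; differentiating \eqref{eq-cc} in $y^j$ produces precisely \eqref{eq-ab}, which by the proof of Theorem \ref{Th-ab} is $\nabla_X\pmb{J}=0$ for all $X\in\mathcal{H}_{\mathbb{R}}$, i.e.\ $(4)$, closing the cycle.

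I expect the delicate points to be: (i) the fact that parallel transport along $\sigma$ only records the nonlinear connection along the curve, so that $(2)$ and $(3)$ must be used along enough regular curves to recover the pointwise relation \eqref{eq-cc}; and (ii) the homogeneity/analytic-continuation step upgrading ``holomorphic in $v$'' to ``polynomial in $v$'', which is what converts $\pmb{J}$-invariance of $\mathcal{H}_{\mathbb{R}}$ into the real Berwald property and thus into \eqref{eq-ab}. The remaining implications are either already established earlier in the paper or are routine computations.
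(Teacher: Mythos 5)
Your proposal is correct, and its overall architecture (reduce the curve-level statements (2),(3) to the pointwise identity \eqref{eq-cc} on the nonlinear connection, then upgrade to \eqref{eq-ab} and invoke Theorems \ref{Th-c}, \ref{Th-d} and \ref{Th-ab}) coincides with the paper's. The genuine difference is in the hard implication $(3)\Rightarrow(4)$. The paper, after reaching \eqref{eq-cc}, invokes the weakly K\"ahler hypothesis twice: first through Lemma 3.1 of \cite{XZ} to identify $\mathbb{G}^\mu=\hat{\mathbb{G}}^\mu+i\hat{\mathbb{G}}^{\mu^\ast}$ with the complex spray (whose $(2,0)$-homogeneity is then used to conclude quadraticity), and then through Theorem 1.1 of \cite{Zhong-a} to pass from weakly complex Berwald to real Berwald. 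You instead work directly with $\hat{\mathbb{G}}^\beta+i\hat{\mathbb{G}}^{\beta^\ast}$: holomorphy in $v$ (Proposition \ref{VT}) plus positive real $2$-homogeneity yields complex $2$-homogeneity by the identity theorem applied on $\mathbb{C}^\ast$, hence quadraticity and the real Berwald property without any appeal to those external results or to the weakly K\"ahler hypothesis in that step; the Landsberg/$\dot{A}=0$ endgame is then the same as the paper's. This is a legitimate and arguably cleaner route, and it even suggests the weakly K\"ahler assumption is not needed for the equivalence of (2), (3) and (4). Your $(4)\Rightarrow(3)$ via the observation that \eqref{eq-cc} is exactly the pointwise commutation of $J$ with the matrix $(\hat{\Gamma}^k_l)$, together with $\hat{\Gamma}^k_{l;j}\dot{\sigma}^j=\hat{\Gamma}^k_l$ from symmetry and \eqref{hcp}, is the same computation the paper performs in indices, packaged more transparently. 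One small caveat you share with the paper: conditions (2) and (3) must be read as holding for all regular curves (or at least for curves realizing every $1$-jet in $\tilde{M}$), since a single fixed $\sigma$ only constrains $\hat{\Gamma}^k_l$ along $(\sigma;\dot{\sigma})$; your phrase ``applied along curves with prescribed $1$-jet at an arbitrary point'' makes this explicit, which the theorem statement itself leaves ambiguous.
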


\begin{proof}

(1)$\Rightarrow$(2): It suffices to show that a type $(1,0)$ complexified tangent vector is preserved under parallel transport along $\sigma$ with respect to $\nabla$. Let $\xi^{1,0}(0)$  a complexified tangent vector of type $(1,0)$ at $\sigma(0)$, namely there exists a real tangent vector $\xi_0$ at $\sigma(0)$ such that $\xi^{1,0}(0)=\frac{1}{2}(\xi_0-iJ\xi_0)$.
Let $\zeta$ be the parallel transport of $\xi^{1,0}(0)$ along $\sigma$ with respect to $\nabla$. Write $\zeta=\zeta^{1,0}+\zeta^{0,1}$, where $\zeta^{1,0}$ and $\zeta^{0,1}$ denote the $(1,0)$ and $(0,1)$ parts of $\zeta$, respectively. It suffices  to show that $\zeta^{1,0}=\zeta$ and $\zeta^{1,0}(0)=\xi^{1,0}(0)$, namely $\zeta^{0,1}\equiv 0$ along $\sigma$.

For this purpose, let $V=V^j(t)\frac{\partial}{\partial x^j}$ be the parallel transport of $\xi_0$ along $\sigma$ with respect to $\nabla$ such that $V(0)=\xi_0$, namely
\begin{equation}
\frac{dV^k}{dt}+V^l\hat{\Gamma}_{l;j}^k(\sigma(t);\dot{\sigma}(t))\dot{\sigma}^j=0.\label{NNN-a}
\end{equation}

Setting $JV=U^k(t)\frac{\partial}{\partial x^k}$. In the following we shall show that $JV$ is also parallel along $\sigma$ with respect to $\nabla$.  It is clear that
\begin{equation}
U^\beta(t)=-V^{\beta^\ast}(t),\quad U^{\beta^\ast}(t)=V^\beta(t).\label{NNN-b}
\end{equation}
By assumption, $F$ is a strongly convex K\"ahler-Berwald metric. Thus by Theorem \ref{Th-d}, $\nabla_X\pmb{J}=0$ for all $X\in\mathcal{H}_{\mathbb{R}}$. Hence by Theorem \ref{Th-ab}, the horizontal Cartan connection coefficients $\hat{\Gamma}_{j;k}^l$ satisfy \eqref{eq-ab}, namely
\begin{equation}
\hat{\Gamma}_{\alpha^\ast;j}^\beta=-\hat{\Gamma}_{\alpha;j}^{\beta^\ast},\quad\hat{\Gamma}_{\alpha^\ast;j}^{\beta^\ast}=\hat{\Gamma}_{\alpha;j}^\beta.\label{NNN-c}
\end{equation}
Note that
 $$
\frac{\pmb{D}U^k}{dt}=\frac{dU^k}{dt}+U^l\hat{\Gamma}_{l;j}^k(\sigma(t);\dot{\sigma}(t))\dot{\sigma}^j(t).
$$
This together with \eqref{NNN-a}-\eqref{NNN-c} implies that
\begin{eqnarray*}
\frac{\pmb{D}U^\beta}{dt}&=&\frac{dU^\beta}{dt}+U^l\hat{\Gamma}_{l;j}^\beta(\sigma(t);\dot{\sigma}(t))\dot{\sigma}^j(t)\\
&=&-\frac{dV^{\beta^\ast}}{dt}-V^{\alpha^\ast}\hat{\Gamma}_{\alpha^\ast;j}^{\beta^\ast}(\sigma(t);\dot{\sigma}(t))\dot{\sigma}^j(t)-V^\alpha\hat{\Gamma}_{\alpha;j}^{\beta^\ast}\dot{\sigma}^j(t)\\
&=&-\frac{dV^{\beta^\ast}}{dt}-V^l\hat{\Gamma}_{l;j}^{\beta^\ast}(\sigma(t);\dot{\sigma}(t))\dot{\sigma}^j(t)=0.
\end{eqnarray*}
By similar reason, we have $\frac{\pmb{D}U^{\beta^\ast}}{dt}=0$. Thus $JV$ is also parallel along $\sigma$ with respect to $\nabla$, namely
\begin{equation}
\frac{\pmb{D}U^k}{dt}=\frac{dU^k}{dt}+U^l\hat{\Gamma}_{l;j}^k(\sigma(t);\dot{\sigma}(t))\dot{\sigma}^j(t)=0.\label{NNN-d}
\end{equation}
So that $W=\frac{1}{2}(V-iJV)$ is parallel along $\sigma$ with respect to $\nabla$. Since $W(0)=\frac{1}{2}(V(0)-iJV(0))=\frac{1}{2}(\xi_0-iJ\xi_0)=\zeta^{1,0}(0)$. Thus $\zeta\equiv W$, namely $\zeta^{1,0}\equiv0$ along $\sigma$.

(2) $\Rightarrow$ (3): Let $V$ be a real vector field which is parallel along $\sigma$ with respect to $\nabla$. Write $V=V^{1,0}+V^{0,1}$, where $V^{1,0}=\frac{1}{2}(V-iJV)$ and $V^{0,1}=\frac{1}{2}(V+iJV)$. Let $W_1$ and $W_2$ be the parallel transport of $V^{1,0}(0)$ and $V^{0,1}(0)$ along $\sigma$ with respect to $\nabla$. By assertion (2),
$W_1$ and $W_2$ are vector fields of type  $(1,0)$ and $(0,1)$ along $\sigma$, respectively, and $W=W_1+W_2$ is parallel along $\sigma$ with respect to $\nabla$. Since $W(0)=W_1(0)+W_2(0)=V^{1,0}(0)+V^{0,1}(0)=V(0)$ so that $W\equiv V$, $W_1=V^{1,0}$ and $W_2=V^{0,1}$ along $\sigma$. Thus $JV=J(V^{1,0}+V^{0,1})=i(V^{1,0}-V^{0,1})$ is also parallel along $\sigma$ with respect to $\nabla$.

(3) $\Rightarrow$ (4): Suppose both $V=V^k(t)\frac{\partial}{\partial x^k}$ and $JV=U^k(t)\frac{\partial}{\partial x^k}$ are  parallel along $\sigma$ with respect to $\nabla$. Then by \eqref{NNN-a}
we have
\begin{eqnarray}
\frac{dV^\beta}{dt}+V^\alpha\hat{\Gamma}_{\alpha;j}^\beta(\sigma(t);\dot{\sigma}(t))\dot{\sigma}^j+V^{\alpha^\ast}\hat{\Gamma}_{\alpha^{\ast};j}^\beta(\sigma(t);\dot{\sigma}(t))\dot{\sigma}^j&=&0,\label{RR-a}\\
\frac{dV^{\beta^\ast}}{dt}+V^\alpha\hat{\Gamma}_{\alpha;j}^{\beta^\ast}(\sigma(t);\dot{\sigma}(t))\dot{\sigma}^j+V^{\alpha^\ast}\hat{\Gamma}_{\alpha^{\ast};j}^{\beta^\ast}(\sigma(t);\dot{\sigma}(t))\dot{\sigma}^j&=&0.\label{RR-b}
\end{eqnarray}
Similarly by \eqref{NNN-b} and \eqref{NNN-d}, we have
\begin{eqnarray}
-\frac{dV^{\beta^\ast}}{dt}-V^{\alpha^\ast}\hat{\Gamma}_{\alpha;j}^\beta(\sigma(t);\dot{\sigma}(t))\dot{\sigma}^j+V^{\alpha}\hat{\Gamma}_{\alpha^{\ast};j}^\beta(\sigma(t);\dot{\sigma}(t))\dot{\sigma}^j&=&0,\label{RR-c}\\
\frac{dV^{\beta}}{dt}-V^{\alpha^\ast}\hat{\Gamma}_{\alpha;j}^{\beta^\ast}(\sigma(t);\dot{\sigma}(t))\dot{\sigma}^j+V^{\alpha}\hat{\Gamma}_{\alpha^{\ast};j}^{\beta^\ast}(\sigma(t);\dot{\sigma}(t))\dot{\sigma}^j&=&0.\label{RR-d}
\end{eqnarray}
By \eqref{RR-a} and \eqref{RR-d}, we have
\begin{equation}
V^\alpha(\hat{\Gamma}_{\alpha;j}^\beta-\hat{\Gamma}_{\alpha^{\ast};j}^{\beta^\ast})\dot{\sigma}^j+V^{\alpha^\ast}(\hat{\Gamma}_{\alpha^{\ast};j}^\beta+\hat{\Gamma}_{\alpha;j}^{\beta^\ast})\dot{\sigma}^j=0.\label{RR-e}
\end{equation}
By \eqref{RR-b} and \eqref{RR-c}, we have
\begin{equation}
V^\alpha(\hat{\Gamma}_{\alpha;j}^{\beta^\ast}+\hat{\Gamma}_{\alpha^{\ast};j}^\beta)\dot{\sigma}^j+V^{\alpha^\ast}(\hat{\Gamma}_{\alpha^{\ast};j}^{\beta^\ast}-\hat{\Gamma}_{\alpha;j}^\beta)\dot{\sigma}^j=0.\label{RR-f}
\end{equation}
Since $\hat{\Gamma}_{j;l}^k=\hat{\Gamma}_{l;j}^k$ and $\hat{\Gamma}_{j;l}^k(x;y)y^j=\hat{\Gamma}_l^k$, it follows that \eqref{RR-e} and \eqref{RR-f} can be simplified as
\begin{eqnarray}
V^\alpha(\hat{\Gamma}_{\alpha}^\beta-\hat{\Gamma}_{\alpha^{\ast}}^{\beta^\ast})+V^{\alpha^\ast}(\hat{\Gamma}_{\alpha^{\ast}}^\beta+\hat{\Gamma}_{\alpha}^{\beta^\ast})&=&0,\label{RR-g}\\
V^\alpha(\hat{\Gamma}_{\alpha}^{\beta^\ast}+\hat{\Gamma}_{\alpha^{\ast}}^\beta)+V^{\alpha^\ast}(\hat{\Gamma}_{\alpha^{\ast}}^{\beta^\ast}-\hat{\Gamma}_{\alpha}^\beta)&=&0.\label{RR-h}
\end{eqnarray}
Since $\hat{\Gamma}_j^k(\sigma;\dot{\sigma})$ are independent of $V^j$ for $j=1,\cdots,2n$, $\sigma:[0,1]\rightarrow M$ is an arbitrary fixed smooth regular curve in $M$ and $V=V^j(t)\frac{\partial}{\partial x^j}$ is an arbitrary parallel vector field along $\sigma$ with respect to $\nabla$, it follows that \eqref{RR-g} and \eqref{RR-h} hold iff
\begin{equation}
\hat{\Gamma}_{\alpha}^\beta(x;y)=\hat{\Gamma}_{\alpha^{\ast}}^{\beta^\ast}(x;y),\quad \hat{\Gamma}_{\alpha^{\ast}}^\beta(x;y)=-\hat{\Gamma}_{\alpha}^{\beta^\ast}(x;y),\quad \forall (x;y)\in\tilde{M}.\label{RR-i}
\end{equation}


Differentiating \eqref{RR-i} with respect to $y^j$ yields
$$\hat{\mathbb{G}}_{\alpha^\ast j  }^\mu(x;y)=-\hat{\mathbb{G}}_{\alpha j}^{\mu^\ast  }(x;y),\quad
\hat{\mathbb{G}}_{\alpha^\ast j }^{\mu^\ast  }(x;y)=\hat{\mathbb{G}}_{\alpha j}^\mu(x;y). $$
 This implies that $\pmb{J}$ is horizontal parallel with respect to the real Berwald  connection $\breve{\nabla}$ associated to $F$.

Since $F$ is a strongly convex weakly K\"ahler-Finsler metric, by Lemma 3.1 in \cite{XZ}, the real spray coefficients $\hat{\mathbb{G}}^k$ and the complex spray coefficients $\mathbb{G}^\mu$ satisfy
\begin{equation}
\mathbb{G}^\mu=\hat{\mathbb{G}}^\mu+i\hat{\mathbb{G}}^{\mathbb{\mu}^\ast}.\label{RR-j}
\end{equation}
Differentiating \eqref{RR-j} with respect to $\bar{v}^\alpha$ and using \eqref{RR-i}, we obtain
\begin{equation}
\frac{\partial\mathbb{G}^\mu}{\partial\bar{v}^\alpha}=\frac{1}{2}[(\hat{\Gamma}_\alpha^\mu-\hat{\Gamma}_{\alpha^\ast}^{\mu^\ast})+i(\hat{\Gamma}_\alpha^{\mu^\ast}+\hat{\Gamma}_{\alpha^\ast}^\mu)]=0.
\end{equation}
This implies that $\mathbb{G}^\mu$ are locally holomorphic with respect to the  fiber coordinates $v$. Note that $\mathbb{G}^\mu$ are $(2,0)$-homogeneous with respect to the local holomorphic coordinates $v$, namely $\mathbb{G}^\mu(z;\lambda v)=\lambda^2\mathbb{G}^\mu(z;v)$. Thus $\mathbb{G}^\mu$ must actually be quadratic with respect to $v$, that is,
$$
\mathbb{G}^\mu=\frac{1}{2}\mathbb{G}_{\alpha\gamma}^\mu(z)v^\alpha v^\gamma,
$$
where $\mathbb{G}_{\alpha\gamma}^\mu(z;v)=\mathbb{G}_{\alpha\gamma}^\mu(z)$ are the complex Berwald connection coefficients of $F$. This shows that $F$ is a weakly complex Berwald metric \cite{Zhong-a}. Since $F$ is a strongly convex weakly K\"ahler-Finsler metric, thus by Theorem 1.1 in \cite{Zhong-a}, $F$ is also a real Berwald metric. Hence $\hat{\mathbb{G}}^k_{jl}=\hat{\Gamma}^k_{j;l}$, namely the real Berwald connection coefficients $\hat{\mathbb{G}}_{jl}^k$ coincide with the horizontal Cartan connection coefficients $\hat{\Gamma}_{j;l}^k$. Thus  the condition \eqref{eq-ab} in Theorem \ref{Th-ab} are satisfied, hence $\nabla_X  \pmb{J}\equiv 0$ for all $X\in\mathcal{H}_{\mathbb{R}}$.

(4)$\Rightarrow$ (1): Theorem \ref{Th-c}.
\end{proof}

\section{K\"ahler-Berwald metrics of constant holomorphic sectional curvature}

In this section, we are able to use the geometric assumption $\nabla \pmb{J}=0$ to classify all strongly convex complex Finsler metrics with constant holomorphic sectional curvatures.

\begin{theorem}\label{RKE}
Let $F: T^{1,0}M\rightarrow [0,+\infty)$ be a strongly convex complex Finsler metric on a complex manifold $M$.  If $\nabla \pmb{J}=0$ and $F$ has constant holomorphic sectional curvature $c\neq 0$, then $F$ is necessary a K\"ahler-Einstein metric on $M$.
\end{theorem}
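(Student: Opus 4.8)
The plan is to exploit the two structural theorems already established — Theorem~\ref{Th-c} and Theorem~\ref{Th-cc} — to reduce the claim to a curvature computation for a K\"ahler-Berwald metric, and then to run the curvature-identity argument outlined by Aikou \cite{Aikou-b}. Since $\nabla\pmb{J}=0$, Theorem~\ref{Th-c} gives that $F$ is a strongly convex K\"ahler-Berwald metric, and Theorem~\ref{Th-cc} gives that the Cartan connection $\nabla$ coincides with the Chern-Finsler connection $D$; in particular $F$ is a complex Berwald metric, so the horizontal Chern-Finsler connection coefficients satisfy $\Gamma_{\beta;\mu}^\alpha=\Gamma_{\beta;\mu}^\alpha(z)$, and, being K\"ahler-Berwald, $F$ is in particular K\"ahler-Finsler. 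All curvature data may therefore be computed from $D$, and by Theorem~\ref{SE} the horizontal flag curvature of $\nabla$ and the holomorphic sectional curvature of $D$ are intertwined, so the hypothesis may be read on the Chern-Finsler side.

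Because $\Gamma_{\beta;\mu}^\alpha$ depends only on $z$, the horizontal curvature tensor
$$K_{\beta;\mu\bar\nu}^\alpha(z):=-\frac{\partial\Gamma_{\beta;\mu}^\alpha}{\partial\bar z^\nu}$$
depends only on $z$ and is annihilated by the vertical derivations $\partial/\partial v^\gamma$ and $\partial/\partial\bar v^\gamma$. With the paper's sign convention, the hypothesis $K_F\equiv c$ then becomes the pointwise identity
$$G_{\alpha\bar\delta}(z;v)\,K_{\beta;\mu\bar\nu}^\alpha(z)\,v^\beta v^\mu\,\overline{v^\delta v^\nu}=\frac{c}{2}\bigl(G_{\rho\bar\sigma}(z;v)v^\rho\overline{v^\sigma}\bigr)^2$$
on $\tilde M$. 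The heart of the argument is a depolarisation: apply $\partial/\partial v^\alpha$ and $\partial/\partial\bar v^\beta$ repeatedly, using the homogeneity of $F$ together with standard identities such as $G_{\alpha\bar\beta\gamma}v^\gamma=0$ (where $G_{\alpha\bar\beta\gamma}:=\partial G_{\alpha\bar\beta}/\partial v^\gamma$), to strip off the explicit factors $v^\beta v^\mu\overline{v^\delta v^\nu}$. This yields an expression for $K_{\beta;\mu\bar\nu}^\alpha$ in terms of $G_{\alpha\bar\beta}$, $G^{\bar\beta\alpha}$ and $G_{\alpha\bar\beta\gamma}$ — the Hermitian complex-space-form formula, plus Cartan-tensor corrections.

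Contracting the resulting identity with $G^{\bar\delta\alpha}$ then produces the horizontal Ricci tensor of $F$ as a constant multiple of $G_{\mu\bar\nu}$, the constant being a dimensional multiple of $c$; hence $F$ is Einstein, and together with the K\"ahler-Finsler property from the first paragraph, $F$ is a K\"ahler-Einstein metric on $M$. The hard part will be the depolarisation step: one must keep careful track of the Cartan-tensor contributions and verify that constant non-zero holomorphic sectional curvature is compatible with them only in a rigid way — in the sharper form, it forces $G_{\alpha\bar\beta}$ to be independent of $v$, i.e. $F$ to be Hermitian, which is the rigidity statement that subsequently feeds into Theorem~\ref{mth-d}. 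The hypothesis $c\neq0$ enters precisely here: when $c=0$ the same depolarisation still forces the curvature to vanish, but yields only that $F$ is locally complex Minkowski rather than Hermitian.
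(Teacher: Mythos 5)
Your proposal follows the paper's skeleton (invoke Theorems \ref{Th-c} and \ref{Th-cc}, write the constant-curvature condition with the $z$-only tensor $A^\alpha_{\beta;\mu\bar\nu}=-\partial\Gamma^\alpha_{\beta;\mu}/\partial\bar z^\nu$, differentiate vertically, trace), but there is a genuine gap at exactly the step you call the heart of the argument. Your starting identity is, after using $G_{\alpha\bar\delta}\overline{v^\delta}=G_\alpha$, the scalar relation $G_\alpha Q^\alpha=\tfrac{c}{2}G^2$ with $Q^\alpha:=A^\alpha_{\beta;\mu\bar\nu}v^\beta v^\mu\overline{v^\nu}$; it only controls the \emph{radial} component of $Q$. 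You cannot ``strip off the explicit factors $v^\beta v^\mu\overline{v^\delta v^\nu}$'' by repeated vertical differentiation: each $\partial/\partial\bar v^\gamma$ also hits the metric coefficients, producing terms such as $G_{\alpha\bar\gamma\bar\rho}A^\alpha_{\beta;\mu\bar\nu}v^\beta v^\mu\overline{v^\nu}$ whose free conjugate indices are not contracted with $\bar v$, so the homogeneity identities you cite ($G_{\alpha\bar\beta\gamma}v^\gamma=0$, etc.) do not eliminate them, and the system never isolates $A^\alpha_{\beta;\mu\bar\nu}$. Indeed, the implication ``Finsler constant holomorphic curvature $\Rightarrow$ the associated Hermitian data is a (constant-curvature) Hermitian metric'' is precisely the unproved assertion of Aikou recalled in Remark \ref{remark-4.1}; it is not a formal depolarisation, and your sketch supplies no mechanism for it.

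What the paper's proof adds, and what your plan is missing, is a symmetry step between the first and second differentiations. One $\bar v^\gamma$-derivative of \eqref{chc-1} gives \eqref{chc-2}, whose left side has \emph{two} terms, $G_{\alpha\bar\gamma}A^\alpha_{\beta;\mu\bar\nu}v^\beta v^\mu\overline{v^\nu}$ and $G_\alpha A^\alpha_{\beta;\mu\bar\gamma}v^\beta v^\mu$. The paper proves these are \emph{equal} by reading them as curvature pairings \eqref{chc-22} and invoking the conjugate symmetries of $\Omega$ (Proposition 2.5.1 and formula (2.3.21) in \cite{AP}), the K\"ahler symmetry $\Gamma^\alpha_{\gamma;\mu}=\Gamma^\alpha_{\mu;\gamma}$, and crucially the identity $G_\alpha\Gamma^\alpha_{\gamma\sigma}=0$, which is where the \emph{vertical} half of $\nabla\pmb{J}=0$ genuinely enters via Theorem \ref{Th-cc} (it gives $\Gamma^\alpha_{\gamma\sigma}=\hat\Gamma^\alpha_{\gamma\sigma}+i\hat\Gamma^{\alpha^\ast}_{\gamma\sigma}$, and $G_a\hat\Gamma^a_{bc}=0$ is a Cartan-connection identity); in your proposal Theorem \ref{Th-cc} is used only to say the curvature may be computed from $D$, and this input is never isolated. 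The equality of the two terms upgrades the hypothesis to the metric-free identity $2A^\alpha_{\beta;\mu\bar\nu}v^\beta v^\mu\overline{v^\nu}=cG\,v^\alpha$ (\eqref{chc-4}), with a free index $\alpha$, and only for this stronger identity does your differentiation plan run cleanly: one obtains \eqref{chc-5}, the Cartan-tensor term $G_{\beta\bar\nu\mu}v^\alpha$ dies in the trace because $G_{\alpha\bar\nu\mu}v^\alpha=0$, and \eqref{chc-6} gives $G_{\mu\bar\nu}=\frac{4}{c(n+1)}\mathrm{Ric}_{\mu\bar\nu}$, whence (using $c\neq0$ and the Berwald property, so that $\mathrm{Ric}_{\mu\bar\nu}$ depends only on $z$) $G_{\mu\bar\nu}$ is $v$-independent and $F$ is K\"ahler--Einstein. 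To repair your proof you must either reproduce this symmetry argument or supply an alternative justification for the passage from $G_\alpha Q^\alpha=\tfrac{c}{2}G^2$ to $Q^\alpha=\tfrac{c}{2}Gv^\alpha$.
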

\begin{proof}
Using Definition 2.5.2 in \cite{AP}, it follows that $F$ has constant holomorphic sectional curvature $c$ along a nonzero tangent vector $v \in T_z^{1,0}M$ iff
	\begin{equation}
-2G_\alpha\frac{\partial \Gamma^\alpha_{;\mu}}{\partial \bar{z}^{\nu}}v^{\mu} \overline{v^\nu}=cG^2.\label{chc-1}
	\end{equation}
 Setting $A^\alpha_{\beta;\mu\bar{\nu}}:=-\frac{\partial\Gamma^{\alpha}_{\beta;\mu} }{\partial \bar{z}^\nu}$. By Theorem \ref{Th-c}, $F$ must be a K\"ahler-Berwald metric since by assumption $\nabla\pmb{J}=0$ implies particularly $\nabla_X\pmb{J}=0$ for any $X\in\mathcal{H}_{\mathbb{R}}$. So that we have $\Gamma_{\beta;\mu}^\alpha(z)=\Gamma_{\mu;\beta}^\alpha(z)$, hence
 $A_{\beta;\mu\bar{\nu}}^\alpha=A_{\mu;\beta\bar{\nu}}^\alpha$ and
 $\Gamma_{;\mu}^\alpha=\Gamma_{\beta;\mu}^\alpha(z)v^\beta$ are holomorphic and complex linear with respect to $v$. Differentiating \eqref{chc-1} with respect to $\bar{v}^\gamma$ yields
\begin{equation}\label{chc-2}
	G_{\alpha\bar{\gamma}}A^\alpha_{\beta;\mu\bar{\nu}} v^{\beta} v^{\mu} \overline{v^\nu}+G_{\alpha}A^\alpha_{\beta;\mu\bar{\gamma}} v^{\beta} v^{\mu} =cGG_{\bar{\gamma}}.
\end{equation}

 Rewritten \eqref{chc-2}, we obtain
 \begin{equation}
 \left\langle\Omega(\chi,\bar{\chi})\chi,\frac{\delta}{\delta z^\gamma} \right\rangle_v+\left\langle\Omega\left(\chi,\frac{\delta}{\delta \bar{z}^\gamma}\right)\chi,\chi \right\rangle_v=cGG_{\bar{\gamma}}.
 \label{chc-22}
 \end{equation}

Using Proposition 2.5.1 in \cite{AP}, we immediately have
\begin{eqnarray*}
\left\langle\Omega(\chi,\bar{\chi})\chi,\frac{\delta}{\delta z^\gamma} \right\rangle_v&=&\overline{\left\langle\Omega(\chi,\bar{\chi})\frac{\delta}{\delta z^\gamma},\chi \right\rangle_v},\\ \left\langle\Omega\left(\chi,\frac{\delta}{\delta \bar{z}^\gamma}\right)\chi,\chi \right\rangle_v&=&\overline {\left\langle\Omega\left(\frac{\delta}{\delta z^\gamma},\bar{\chi}\right)\chi,\chi \right\rangle_v}.
\end{eqnarray*}
Using the first equality of (2.3.21) in \cite{AP}, it follows that
\begin{eqnarray}\label{hs-1} \left\langle\Omega(\chi,\bar{\chi})\frac{\delta}{\delta z^\gamma},\chi \right\rangle_v&=&-G_\alpha\frac{\partial \Gamma^\alpha_{\gamma;\mu}}{\partial \bar{v}^\nu}v^\mu\overline{v^\nu}-G_\alpha\Gamma^{\alpha}_{\gamma\sigma}\frac{\partial\Gamma^\sigma_{;\mu}}{\partial \bar{v}^\nu}v^\mu\overline{v^\nu}\nonumber.
\end{eqnarray}

  By assumption $\nabla\pmb{J}=0$, it follows from Theorem \ref{Th-cc} that the Cartan connection of $F$  coincides with the Chern-Finsler connection of $F$, hence the vertical Chern-Finsler connection coefficients $\Gamma_{\gamma\sigma}^\alpha$ and the vertical Cartan connection coefficients $\hat{\Gamma}_{bc}^a$ satisfy the equality  $\Gamma_{\gamma\sigma}^\alpha=\hat{\Gamma}^{\alpha}_{\gamma\sigma}+i\hat{\Gamma}^{\alpha^\ast}_{\gamma\sigma}$. On the other hand, the vertical Cartan connection coefficients $\hat{\Gamma}_{bc}^a$ of $F$ always satisfy $$G_a\hat{\Gamma}_{bc}^a=\frac{1}{4}G_ag^{as}\frac{\partial^3F^2}{\partial y^b\partial y^c\partial y^s}=\frac{1}{2}\frac{\partial^3F^2}{\partial y^b\partial y^c\partial y^s}y^s=0,$$
   hence
  \begin{eqnarray*}
	2G_\alpha\Gamma^{\alpha}_{\gamma\sigma}&=&(G_{\alpha}-iG_{\alpha^\ast})(\hat{\Gamma}^{\alpha}_{\gamma\sigma}+i\hat{\Gamma}^{\alpha^\ast}_{\gamma\sigma})\\
	&=&(G_{\alpha}\hat{\Gamma}^{\alpha}_{\gamma\sigma}+G_{\alpha^\ast}\hat{\Gamma}^{\alpha^\ast}_{\gamma\sigma})-i(G_{\alpha^\ast}\hat{\Gamma}^{\alpha^\ast}_{\gamma^\ast\sigma}+G_{\alpha}\hat{\Gamma}^{\alpha}_{\gamma^\ast\sigma})\\
	&=&0,
	\end{eqnarray*}
Thus
\begin{eqnarray*} \left\langle\Omega(\chi,\bar{\chi})\frac{\delta}{\delta z^\gamma},\chi \right\rangle_v
	=-G_\alpha\frac{\partial\Gamma^\alpha_{\gamma;\mu}}{\partial \bar{v}^\nu}v^\mu\overline{v^\nu}
	=\left\langle\Omega\left(\frac{\delta}{\delta z^\gamma},\bar{\chi}\right)\chi,\chi \right\rangle_v
\end{eqnarray*}
since $\Gamma_{\gamma;\mu}^\alpha=\Gamma_{\mu;\gamma}^\alpha$.
Therefore we actually obtain
$$\left\langle\Omega(\chi,\bar{\chi})\chi,\frac{\delta}{\delta z^\gamma} \right\rangle_v=\left\langle\Omega\left(\chi,\frac{\delta}{\delta \bar{z}^\gamma}\right)\chi,\chi \right\rangle_v.$$
Thus \eqref{chc-22} can be simplified as
\begin{equation}\label{chc-4}
	2A^\alpha_{\beta;\mu\bar{\nu}} v^{\beta} v^{\mu} \overline{v^\nu} =cGv^{\alpha}.
\end{equation}
Differentiating \eqref{chc-4} with respect to $\bar{v}^\nu$ yields
\begin{equation}
	2A^\alpha_{\beta;\mu\bar{\nu}} v^{\beta} v^{\mu}  =cG_{\bar{\nu}}v^{\alpha}.\label{chc-aaa}
\end{equation}
Differentiating \eqref{chc-aaa} with respect to $v^\beta$ and $v^\mu$ successively yields
\begin{equation}\label{chc-5}
	4A^\alpha_{\beta;\mu\bar{\nu}}=c\left(G_{\beta\bar{\nu}}\delta^{\alpha}_{\mu}+cG_{\mu\bar{\nu}}\delta^{\alpha}_{\beta}+cG_{\beta\bar{\nu}\mu}v^{\alpha}\right).
\end{equation}
Taking $\alpha=\beta$ and then summing $\alpha$ from $1$ to $n$, we obtain
\begin{equation}\label{chc-6}
4\sum_{\alpha=1}^nA^\alpha_{\alpha;\mu\bar{\nu}}=c(n+1)G_{\mu\bar{\nu}}\quad\mbox{equivalently}\quad G_{\mu\bar{\nu}}=\frac{4}{c(n+1)}\text{Ric}_{\mu\bar{\nu}}.
	\end{equation}
This implies that $F$ is a K\"ahler-Einstein metric on $M$ (since $\text{Ric}_{\mu\bar{\nu}}$ are independent of $v$, hence $G_{\mu\bar{\nu}}$ of $F$ are also independent of the fiber coordinates $v$).
\end{proof}

Now we immediately obtain the following rigidity theorem.
 \begin{theorem}\label{RKB}
 	Let $F: T^{1,0}M\rightarrow[0,+\infty)$ be a complete strongly convex complex Finsler metric on a simply connected complex manifold $M$. If $\nabla\pmb{J}=0$ and $F$ has constant holomorphic sectional curvature $c\in \mathbb{R}$. Then
 \begin{itemize}
 \item[(i)] if $c<0$, $(M,F)$ is a K\"ahler manifold which is holomorphically isometric to the open unit ball $B_n$ in $\mathbb{C}^n$ with a constant multiple of the Bergman metric, namely
 \begin{equation}
 F^2(z;v)=-\frac{4}{c}\frac{(1-\| z\| ^2)\| v\| ^2+\vert \langle z,v\rangle\vert ^2}{(1-\| z\| ^2)^2}
 \end{equation}

\item[(ii)] if $c=0$, $(M,F)$ is locally a complex Minkowski space which is locally holomorphic isometric to $\mathbb{C}^n$ with some complex Minkowski metric $F(z;v)=f(v)$;
 	
 \item[(iii)] if $c>0$,  $(M,F)$ is a K\"ahler manifold which is holomorphically isometric to the complex projective space $\mathbb{CP}^n$ with a constant multiple of the Fubini-Study metric, namely
 \begin{equation}
 F^2(z;v)=\frac{4}{c}\frac{(1+\| z\| ^2)\| v\| ^2-\vert \langle z,v\rangle\vert ^2}{(1+\| z\| ^2)^2}.
 \end{equation}
\end{itemize}
\end{theorem}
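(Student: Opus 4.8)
The plan is to treat the cases $c\neq 0$ and $c=0$ separately and, in each, to reduce the Finsler problem to a classical uniformization statement. For $c\neq 0$, since $\nabla\pmb J=0$ we may apply Theorem \ref{RKE}: $F$ is K\"ahler-Einstein, and more precisely \eqref{chc-6} gives $G_{\mu\bar\nu}=\frac{4}{c(n+1)}\mathrm{Ric}_{\mu\bar\nu}$, where $\mathrm{Ric}_{\mu\bar\nu}$ depends only on $z$; hence $G_{\mu\bar\nu}$ depends only on $z$ and $F^2(z;v)=G_{\mu\bar\nu}(z)v^\mu\overline{v^\nu}$ is Hermitian quadratic. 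Thus $F=\sqrt h$ for the Hermitian metric $h=2\,\mathrm{Re}\bigl(G_{\mu\bar\nu}\,dz^\mu\otimes d\bar z^\nu\bigr)$ on $M$. Because $\nabla\pmb J=0$ forces $F$ to be a K\"ahler-Berwald metric (Theorem \ref{Th-c}), the Chern-Finsler connection of $F$ descends to a genuine connection on $T^{1,0}M$ with coefficients $\Gamma^\alpha_{\beta;\mu}(z)$ symmetric in $\beta,\mu$; this is exactly the Chern connection of $h$, and the symmetry is the K\"ahler condition for $h$. So $(M,h)$ is a K\"ahler manifold.

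Next I would check that, once $G$ is independent of $v$, the constant holomorphic sectional curvature condition \eqref{chc-1} for $F$ is equivalent to $h$ having constant holomorphic sectional curvature (equal to $c$, up to the normalization built into Definition 2.5.2 of \cite{AP}); this is a direct comparison of \eqref{chc-1} with the curvature tensor of the K\"ahler metric $h$. Since $F$ is complete, so is $h$; as $M$ is simply connected, the classical classification of complete simply connected complex space forms identifies $(M,h)$ with $\bigl(B_n,\ \text{const}\cdot g_{\mathrm{Bergman}}\bigr)$ when $c<0$ and with $\bigl(\mathbb{CP}^n,\ \text{const}\cdot g_{\mathrm{FS}}\bigr)$ when $c>0$. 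Writing these model metrics in standard coordinates and fixing the multiplicative constant by matching the curvature to $c$ produces exactly the formulas stated in (i) and (iii).

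For $c=0$, again $\nabla\pmb J=0$ makes $F$ a K\"ahler-Berwald metric (Theorem \ref{Th-c}), so $\Gamma^\alpha_{\beta;\mu}=\Gamma^\alpha_{\beta;\mu}(z)$ depends only on $z$ and the geodesic spray coefficients are $\mathbb G^\mu=\tfrac12\mathbb G^\mu_{\alpha\gamma}(z)v^\alpha v^\gamma$, quadratic in $v$. Repeating the differentiations in the proof of Theorem \ref{RKE} with $c=0$ — namely \eqref{chc-1}, then \eqref{chc-4}, \eqref{chc-aaa}, \eqref{chc-5} — collapses to $A^\alpha_{\beta;\mu\bar\nu}=-\partial\Gamma^\alpha_{\beta;\mu}/\partial\bar z^\nu\equiv 0$, i.e. the horizontal curvature of the Chern-Finsler connection vanishes identically. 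A complex Berwald metric with vanishing horizontal curvature is locally holomorphically isometric to a complex Minkowski space: flatness of the base connection $\Gamma^\alpha_{\beta;\mu}(z)$ lets one choose, near each point, holomorphic coordinates in which $\Gamma^\alpha_{\beta;\mu}\equiv 0$, whence $\mathbb G^\mu\equiv 0$ and therefore $F$ is independent of $z$, i.e. $F(z;v)=f(v)$ for some strongly convex complex Minkowski metric $f$ on $\mathbb C^n$. This gives (ii).

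The steps I expect to be most delicate are: (a) pinning down the exact normalization so that the constant $c$ in \eqref{chc-1} matches the holomorphic sectional curvature convention used in the uniformization theorem, so that the factors $\pm 4/c$ in (i) and (iii) come out correctly rather than off by a universal scalar; and (b) in the $c=0$ case, the passage from ``horizontal curvature $\equiv 0$'' to ``locally $\Gamma^\alpha_{\beta;\mu}\equiv 0$'', which requires recognizing $\mathbb G^\mu_{\alpha\gamma}(z)$ as the coefficients of a flat holomorphic connection on $M$ and using simple connectedness (equivalently, invoking Aikou's description of a complex Berwald space as modeled on a complex Minkowski space, whose holonomy must here be trivial). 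Everything else is either a direct quotation of Theorems \ref{RKE} and \ref{Th-c} or routine verification.
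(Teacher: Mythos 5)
Your treatment of the case $c\neq 0$ is essentially the paper's own argument: Theorem \ref{RKE} (whose proof already shows, via \eqref{chc-6}, that $G_{\mu\bar{\nu}}$ depends only on $z$, so that $F$ is Hermitian quadratic and K\"ahler) followed by the classical classification of complete simply connected K\"ahler manifolds of constant holomorphic sectional curvature; your extra care about matching AP's normalization of the holomorphic sectional curvature with the classical one is legitimate but routine. The divergence is in the case $c=0$: the paper disposes of it by quoting Theorem 2.1 of \cite{Aikou-b} (K\"ahler--Berwald, here guaranteed by Theorem \ref{Th-c}, plus vanishing holomorphic sectional curvature implies locally complex Minkowski), whereas you attempt a direct proof, and that is where there is a genuine gap. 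Running the computation of Theorem \ref{RKE} with $c=0$ does give $A^\alpha_{\beta;\mu\bar{\nu}}=-\partial\Gamma^\alpha_{\beta;\mu}/\partial\bar{z}^\nu\equiv 0$, but this only says that the Berwald coefficients $\Gamma^\alpha_{\beta;\mu}(z)$ are \emph{holomorphic} in $z$, i.e.\ that the $(1,1)$-part of the curvature of the induced linear connection on $T^{1,0}M$ vanishes. A holomorphic (even torsion-free) affine connection is not automatically flat: to obtain local holomorphic coordinates with $\Gamma^\alpha_{\beta;\mu}\equiv 0$ you must also kill the $(2,0)$-curvature $\partial_\nu\Gamma^\alpha_{\beta;\mu}-\partial_\mu\Gamma^\alpha_{\beta;\nu}+\Gamma^\alpha_{\sigma;\nu}\Gamma^\sigma_{\beta;\mu}-\Gamma^\alpha_{\sigma;\mu}\Gamma^\sigma_{\beta;\nu}$, and your proposal gives no argument for this; calling $\mathbb{G}^\mu_{\alpha\gamma}(z)$ ``the coefficients of a flat holomorphic connection'' assumes exactly what has to be proved, and the appeal to ``trivial holonomy'' begs the same question (simple connectedness is in any case irrelevant for the purely local statement (ii)).

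The gap is repairable, but it needs an actual argument: for instance, the Chern--Finsler connection form in the frame $\{\partial/\partial v^\alpha\}$ is $G^{\bar{\tau}\alpha}\partial(G_{\beta\bar{\tau}})$, so its curvature is of type $(1,1)$ on $\tilde{M}$ and in particular has no $dz^\nu\wedge dz^\mu$-component; writing out that component in the adapted coframe in the Berwald case and contracting with $v^\beta$ (using $\Gamma^\alpha_{\beta\gamma}v^\beta=0$) forces the base $(2,0)$-curvature to vanish, after which your conclusion goes through (and even then the last step ``$\Gamma^\alpha_{;\mu}=0$ implies $F(z;v)=f(v)$'' should be justified: $\partial^2F^2/\partial z^\mu\partial\bar{v}^\tau=0$ makes $\partial F^2/\partial z^\mu$ holomorphic in $v$, and its $(1,1)$-homogeneity then forces it to vanish). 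Alternatively, and this is what the paper actually does, one simply invokes Aikou's Theorem 2.1 in \cite{Aikou-b} as the black box for case (ii). As written, your $c=0$ argument is incomplete at its central step.
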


\begin{proof}
For $c=0$, it follows from Theorem 2.1 in \cite{Aikou-b} that $F$ is locally a complex Minkowski metric.

For $c\neq 0$, it follows from Theorem \ref{RKE} that $F$ is a K\"ahler-Einsetin metric on $M$. Since $M$ is simply connected and $F$ has constant holomorphic sectional curvature $c$, the classical result of K\"ahler geometry (cf. page 171 in \cite{Kobayashi}) implies the assertions (i) and (iii).
\end{proof}

The nontrivial case of Theorem \ref{RKB} is to show the case whenever $M$ admits both Hermitian quadratic and non-Hermitian quadratic K\"ahler-Berwald metrics, the constant holomorphic sectional curvature property implies K\"ahler-Berwald metrics must actually be K\"ahler metrics. Indeed, there exist lots of complex manifolds which admit both Hermitian quadratic and non-Hermitian quadratic K\"ahler-Berwald metrics, such as the unit polydisks in $\mathbb{C}^n$ ($n\geq 2$) \cite{Zhong-b},\cite{Lin-Zhong} and irreducible bounded symmetric domains \cite{Ge-Zhong},\cite{Zhong-c}. There are also complex manifolds, such as the open unit ball $B_n$ in $\mathbb{C}^n$  and the complex projective space $\mathbb{CP}^n$ \cite{Cao-Ge-Zhong}, which admit no non-Hermitian quadratic K\"ahler-Berwald metrics other than constant scaling of the standard K\"ahler metrics on $B_n$ and $\mathbb{CP}^n$, respectively.

\begin{remark}\label{remark-4.1}
 In \cite{Aikou-b}, under the assumptions that $(M,F)$ is a simply connected and complete complex manifold modeled on a complex Minkowski space $(\mathbb{C}^n,f)$, that $F$ is a K\"ahler-Finsler metric and satisfies the Royden condition, and that $(M,F)$ has constant holomorphic sectional curvature $c$, Aikou outlined a proof of the above theorem (using the nontrivial assertion that "if a K\"ahler-Berwald manifold $(M,F)$ is of constant holomorphic sectional curvature $c$, then $(M,h_M)$ is also of constant holomorphic sectional curvature $c$" without proof). More precisely, the equality $F(z;v)=f(v)$ in assertion (ii) is replaced by $F(z;v)=f(v)\leq \sum_{k=1}^n\vert v^k\vert ^2$ and the "$=$" in assertion (iii) is replaced by "$\leq$" in \cite{Aikou-b}.  In contrast, our proof of Theorem \ref{RKB} is based on the rigidity Theorem \ref{RKE} which is essentially different from that of \cite{Aikou-b}.  Moreover, our hypothesis $\nabla\pmb{J}=0$ is a simple geometric condition.
\end{remark}

\begin{remark}The complex Minkowski metric $f(v)$ in assertion (ii) can not be replaced by the canonical Euclidean metric $f(v)=\sum_{\alpha=1}^n\vert v^\alpha\vert^2$. Indeed, for any fixed $t\in[0,+\infty)$ and integer $k\geq 2$,
 $$f_{t,k}(v)=\vert v^1\vert^2+\cdots+\vert v^n\vert^2+t\sqrt{\vert v^1\vert^{2k}+\cdots+\vert v^n\vert^{2k}},\quad \forall v=(v^1,\cdots,v^n)\in\mathbb{C}^n$$
 is a strongly convex complex Minkowski metric on $\mathbb{C}^n$ \cite{Zhong-b},  and $(\mathbb{C}^n,f_{t,k})$  is a complex Minkowski space with vanishing holomorphic sectional curvature.  However, $(\mathbb{C}^n,f_{t_1,k_1})$ is not holomorphic isometric to $(\mathbb{C}^n,f_{t_2,k_2})$ whenever $t_1\neq t_2$ and $k_1\neq k_2$.
\end{remark}

\begin{remark}
The method developed in this paper can also be used to study the parallelism of the canonical complex structure $\pmb{J}$ on the pull-back bundle $\pi^\ast T_{\mathbb{R}}M$ with respect to the Chern-Rund connection of $F$, this will appear elsewhere.
\end{remark}

\vskip0.4cm
\noindent
\textbf{Acknowledgement.}\ {\small The second author is supported by National Natural Science Foundation of China (Grant No. 12471080).




\end{document}